\newtheorem{thm}{Theorem}[section]
\newtheorem{lem}[thm]{Lemma}
\newtheorem{prop}[thm]{Proposition}
\theoremstyle{definition}
\newtheorem{defn}[thm]{Definition}
\theoremstyle{remark}
\newtheorem{rem}[thm]{\textbf{Remark}}
\newtheorem{q}[thm]{\textbf{Question}}
      \def\@makefnmark{%
         \leavevmode
            \raise.9ex\hbox{\check@mathfonts
                \fontsize\sf@size\z@\normalfont%
                            \@thefnmark}%
       }
\begin{document}

\title[Stability of Lamb dipoles without the finite mass condition]{Stability of Lamb dipoles for odd-symmetric and non-negative initial disturbances without the finite mass condition}
\author{Ken Abe, Kyudong Choi, In-Jee Jeong}
\date{}
\address[Ken Abe]{Department of Mathematics, Graduate School of Science, Osaka Metropolitan University, 3-3-138 Sugimoto, Sumiyoshi-ku Osaka, 558-8585, Japan}
\email{kabe@omu.ac.jp}
\address[Kyudong Choi]{Department of Mathematical Sciences, Ulsan National Institute of Science and Technology, UNIST-gil 50, Ulsan, 44919, Republic of Korea}
\email{kchoi@unist.ac.kr}
\address[In-Jee Jeong]{Department of Mathematical Sciences and RIM, Seoul National University, Seoul 08826, Korea}
\email{injee\_j@snu.ac.kr}

\subjclass[2020]{35Q31, 35Q35}
\keywords{}
\date{\today}

\begin{abstract}
In this paper, we consider the stability of the Lamb dipole solution of the two-dimensional Euler equations in $\mathbb{R}^{2}$ and question under which initial disturbance the Lamb dipole is stable, motivated by experimental work on the formation of a large vortex dipole in two-dimensional turbulence. We assume (O) odd symmetry for the $x_2$-variable and (N) non-negativity in the upper half plane for the initial disturbance of vorticity, and establish the stability theorem of the Lamb dipole without assuming (F) finite mass condition. The proof is based on a new variational characterization of the Lamb dipole using an improved energy inequality.
\end{abstract}

\maketitle



\section{Introduction}

\subsection{Lamb dipoles}
We consider the two-dimensional Euler equations in $\mathbb{R}^{2}$ expressed in the vorticity form 
\begin{equation}
\begin{aligned}
\partial_t \zeta+v\cdot \nabla \zeta=0,\quad  v&=k*\zeta,
\end{aligned}
\label{eq: Euler}
\end{equation}
with the kernel $k(x)=(2\pi)^{-1}x^{\perp}|x|^{-2}$ for $x^{\perp}={}^{t}(-x_2,x_1)$. The equations \eqref{eq: Euler} admit traveling wave solutions of the form 
\begin{equation}
\begin{aligned}
v(x,t)&=u(x+u_{\infty}t)-u_{\infty},\\
\zeta(x,t)&=\omega(x+u_{\infty}t),    
\end{aligned}
\label{eq: TW}
\end{equation}
for a constant $u_{\infty}\in \mathbb{R}^{2}$ with the profile $(u,\omega)$ satisfying the stationary equations 
\begin{equation}
\begin{aligned}
u\cdot \nabla \omega=0,\quad  u=k*\omega+u_{\infty}.\\    
\end{aligned}
\label{eq: SE}
\end{equation}
The simplest solution to \eqref{eq: SE} is a Lamb dipole (Chaplygin--Lamb dipole) \cite{Lamb2nd}, \cite{Chap1903}, \cite{Lamb3rd}, \cite[p.231]{Lamb} which is symmetric about the $x_1$-axis; see \cite[p.197]{MV94} for its origin.

\begin{defn}[Lamb dipole]
Let $0<\lambda, W<\infty$. We say that $\omega_{L}=\omega_{L}^{\lambda,W}$ is a Lamb dipole if $\omega_{L}=\lambda \max\{\Psi_{L},0 \}$ for 
\begin{equation}
\Psi_{L}(x)=\left\{
\begin{aligned}
C_LJ_1(\sqrt{\lambda} r)\sin\theta,\quad r\leq a,\\
-W\left(r-\frac{a^{2}}{r}\right)\sin\theta,\quad r>a,
\end{aligned}
\right. \label{eq: Lamb}
\end{equation}
in the coordinates $(r,\theta)$ with the constants
\begin{align}
C_L=-\frac{2W}{\sqrt{\lambda}J_0(c_0)},\quad a=\frac{c_0}{\sqrt{\lambda}},   \label{eq: LambConst}
\end{align}
where $J_{m}(r)$ is the $m$-th order Bessel function of the first kind and $c_0=3.8317\cdots$ is the first zero point of $J_1$, i.e., $J_1(c_0)=0$.
\end{defn}

\begin{figure}[h]
  \centering
  \begin{subfigure}[t]{0.45\textwidth}\centering
    \raisebox{2.5ex}{
    \resizebox{0.55\linewidth}{!}
{ \includegraphics[width=0.7\linewidth]{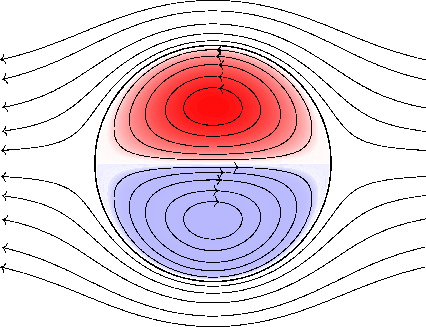}}}
    \caption{Chaplygin--Lamb dipole}
  \end{subfigure}\hspace{-30pt}
  \begin{subfigure}[t]{0.45\textwidth}\centering
    \resizebox{0.55\linewidth}{!}{ \includegraphics[width=0.7\linewidth]{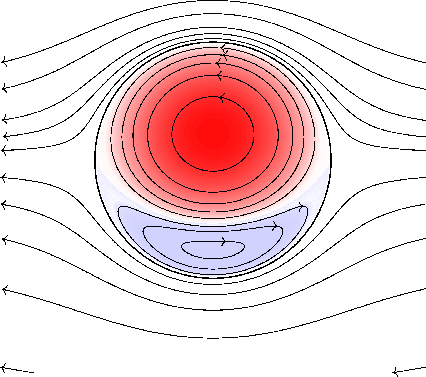}}
    \caption{Chaplygin's asymmetric dipole}
  \end{subfigure}
  \caption{Streamlines of symmetric and asymmetric dipoles. Positive vorticity in red and negative vorticity in blue.}\label{f: CL}
\end{figure}

The Lamb dipole \eqref{eq: Lamb} satisfies the equations \eqref{eq: SE} with the associated velocity field $u_{L}={}^{t}(\partial_{x_2}\Psi_{L},-\partial_{x_1}\Psi_{L})=(\partial_{x_2}\psi_{L}-W,-\partial_{x_1}\psi_{L})$ and the constant $u_{\infty}={}^{t}(-W,0)$. Its kinetic energy, enstrophy, and impulse are as follows:
\begin{align}
E=\frac{1}{2}\int_{\mathbb{R}^{2}_{+}}|\nabla \psi_{L}|^{2}dx=\frac{c_0^{2}\pi W^{2}}{\lambda},\quad
Z=\int_{\mathbb{R}^{2}_{+}}| \omega_{L}|^{2}dx=c_0^{2}\pi W^{2},\quad
P=\int_{\mathbb{R}^{2}_{+}}x_2 \omega_{L}dx=\frac{c_0^{2}\pi W}{\lambda}.  \label{eq: C}
\end{align}
We remark that Chaplygin \cite{Chap1903} derived asymmetric dipoles, including the Chaplygin--Lamb dipole as a particular case; see Figure \ref{f: CL}.


The solution \eqref{eq: Lamb} is a theoretical model for coherent structures in two-dimensional turbulence, e.g., \cite{CH09}. It possesses the following properties:
\begin{itemize}
\item[(O)] \textbf{Odd-symmetry}; $\zeta(x_1,x_2)=-\zeta(x_1,-x_2)$   
\item[(N)] \textbf{Non-negativity}; $\zeta(x_1,x_2)\geq 0$ for $x_2\geq 0$ 
\item[(F)] \textbf{Finite mass}; $\zeta \in L^{1}(\mathbb{R}^{2})$   
\end{itemize}
It is observed from experimental works \cite{VF89}, \cite{FV94}, \cite{Afan} that large dipole vortices are formed as stable structures in stratified flows for quite general initial data; see Figure \ref{f: VD}. On the other hand, the mathematical stability theorems of the Lamb dipole \eqref{eq: Lamb} in the 2D Euler equations \eqref{eq: Euler} \cite[Theorem 1.1]{AC22}, \cite[Theorem 5.1]{Wang24} require the restrictive conditions (O), (N), and (F) for the initial disturbance $\zeta_0$. It is a question of which initial disturbances make the solution \eqref{eq: Lamb} stable. We address this question in the following:
\begin{q}\label{q: Q}
For which initial disturbances is the Lamb dipole \eqref{eq: Lamb} stable in the 2D Euler equations \eqref{eq: Euler}? 
\end{q}

\subsection{The statement of the main result}

In this paper, we note that the Lamb dipole \eqref{eq: Lamb} is stable in the 2D Euler equations \eqref{eq: Euler} without assuming the finite mass condition (F) for initial disturbances $\zeta_0$. We assume the boundedness of the disturbance $\zeta_0\in L^{2}(\mathbb{R}^{2})$ and $x_2|\zeta_0|\in L^{1}(\mathbb{R}^{2})$ and consider the stability of \eqref{eq: Lamb} for solutions to the Euler equations \eqref{eq: Euler} with finite kinetic energy, enstrophy, and impulse. The following main result improves the stability result of \cite{AC22}.

\begin{thm}\label{t: mthm}
Let $0<\lambda, W<\infty$ and $P=c_0^{2}\pi W/\lambda$. The Lamb dipole $\omega_{L}$ is orbitally stable in the sense that for $\varepsilon>0$, there exists $\delta>0$ such that for $\zeta_0\in L^{2} (\mathbb{R}^{2}_{+})$ satisfying $x_2\zeta_{0}\in L^{1}(\mathbb{R}^{2}_{+})$, $\zeta_{0}\geq 0$, 
\begin{align}
\inf_{y\in \partial\mathbb{R}^{2}_{+}}\left\|\zeta_0-\omega_{L}(\cdot+y) \right\|_{L^{2}(\mathbb{R}^{2}_{+})}+\left|\int_{\mathbb{R}^{2}_{+}} x_2\zeta_0\dd x-P  \right|  \leq \delta, \label{eq: S1}
\end{align}
there exists a global weak solution $\zeta(t)$ of \eqref{eq: Euler} satisfying
\begin{align}
\inf_{y\in \partial\mathbb{R}^{2}_{+}}\left\{\left\|\zeta(t)-\omega_{L}(\cdot+y) \right\|_{L^{2}(\mathbb{R}^{2}_{+})}+\left\|x_2(\zeta(t)-\omega_{L}(\cdot+y)) \right\|_{L^{1}(\mathbb{R}^{2}_{+})}\right\}\leq \varepsilon,\quad \textrm{for all}\ t\geq0.  \label{eq: S2}
\end{align}
\end{thm}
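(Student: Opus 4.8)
The plan is to run the standard variational / concentration--compactness scheme for orbital stability (\textit{cf.} Cazenave--Lions, Benjamin, Burton), the new ingredient being a sharp \emph{improved energy inequality} that replaces Burton's rearrangement theory and thereby removes the finite mass hypothesis (F). For $\zeta\ge 0$ on $\mathbb{R}^2_+$ (extended oddly to $\mathbb{R}^2$) write $Z(\zeta)=\|\zeta\|_{L^2(\mathbb{R}^2_+)}^2$, $P(\zeta)=\int_{\mathbb{R}^2_+}x_2\zeta\,dx$, and $E(\zeta)=\tfrac12\int_{\mathbb{R}^2_+}|\nabla\psi_\zeta|^2dx$ with $-\Delta\psi_\zeta=\zeta$ in $\mathbb{R}^2_+$ and $\psi_\zeta=0$ on $\partial\mathbb{R}^2_+$ (so $\psi_\zeta=G_+*\zeta$ for the half-plane Green function $G_+$), and set
\[
K_{Z,P}=\bigl\{\zeta\in L^2(\mathbb{R}^2_+):\ \zeta\ge 0,\ x_2\zeta\in L^1(\mathbb{R}^2_+),\ Z(\zeta)\le Z,\ P(\zeta)=P\bigr\}.
\]
The core claim I would prove is that for every such $\zeta$,
\[
E(\zeta)\ \le\ \frac{1}{c_0\sqrt{\pi}}\,\sqrt{Z(\zeta)}\;P(\zeta),
\]
with equality iff $\zeta=\omega_{L}^{\lambda,W}(\cdot+y)$ for some $y\in\partial\mathbb{R}^2_+$ and $(\lambda,W)$ fixed by $c_0^2\pi W^2=Z(\zeta)$, $c_0^2\pi W/\lambda=P(\zeta)$; equivalently, the Lamb dipole with parameters read off from $(Z,P)$ is, up to horizontal translation, the unique maximiser of $E$ on $K_{Z,P}$, with value $c_0^2\pi W^2/\lambda$ as in \eqref{eq: C}.

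\textbf{Proof of the variational characterisation.} Existence of a maximiser would come from concentration--compactness applied to the energy densities $|\nabla\psi_{\zeta^n}|^2$ (or to $x_2\zeta^n$). The two scalings $\zeta\mapsto\sigma\zeta$ and $\zeta\mapsto\zeta(\mu\,\cdot)$ act on $(Z,P,E)$ by $(\sigma^2,\sigma,\sigma^2)$ and $(\mu^{-2},\mu^{-3},\mu^{-4})$, which forces the concentration function $M(Z,P):=\sup_{K_{Z,P}}E$ to have the homogeneous form $M(Z,P)=b\sqrt Z\,P$ for a single constant $b>0$; and $M(Z,P)>0$ since $\omega_L^{\lambda,W}\in K_{Z,P}$. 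The pointwise bound $G_+(x,y)\le \pi^{-1}\sqrt{x_2y_2}\,|x-y|^{-1}$ (from $\log(1+t)\le 2\sqrt t$) together with Hardy--Littlewood--Sobolev and the Hölder bound $\|\sqrt{x_2}\,\zeta\|_{L^{4/3}(\mathbb{R}^2)}^{2}\lesssim \sqrt{Z(\zeta)}\,P(\zeta)$ gives $E(\zeta)\lesssim \sqrt{Z(\zeta)}\,P(\zeta)$, which (i) excludes vanishing (it would force $E(\zeta^n)\to0<M$) and (ii) excludes the dangerous split in the dichotomy case where one piece carries impulse but no $L^2$-enstrophy (that piece then has vanishing energy) --- exactly the place where dropping (F) bites. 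In the remaining dichotomy cases ($Z_1,Z_2>0$, $P_1,P_2>0$, $Z_1+Z_2\le Z$, $P_1+P_2=P$), the strict superadditivity $b\sqrt{Z_1+Z_2}(P_1+P_2)>b(\sqrt{Z_1}P_1+\sqrt{Z_2}P_2)$ rules out splitting, so the sequence is tight and a maximiser $\zeta^\ast\in K_{Z,P}$ exists. It satisfies the Euler--Lagrange relation $\zeta^\ast=\tfrac1{2\mu}(\psi_{\zeta^\ast}-\nu x_2)_+$ with multipliers $\mu>0$ (the enstrophy constraint being active) and $\nu>0$; hence $\Phi:=\psi_{\zeta^\ast}-\nu x_2$ solves $-\Delta\Phi=\tfrac1{2\mu}\Phi$ on its positivity set, is harmonic and $\le0$ off it, and $\sim-\nu x_2$ at infinity --- the overdetermined free-boundary problem characterising Lamb dipoles. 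By its classical uniqueness (the positivity set must be a disc), matching $Z(\zeta^\ast)=Z$, $P(\zeta^\ast)=P$, and the sign constraint on $\mathbb{R}^2_+$ (which kills vertical translations), one gets $\zeta^\ast=\omega_L^{\lambda,W}(\cdot+y)$, and \eqref{eq: C}--\eqref{eq: LambConst} give $E(\omega_L)=WP=(c_0\sqrt\pi)^{-1}\sqrt Z\,P$, i.e.\ $b=(c_0\sqrt\pi)^{-1}$. The improved energy inequality then follows: any $\zeta$ obeys $E(\zeta)\le M(Z(\zeta),P(\zeta))=b\sqrt{Z(\zeta)}P(\zeta)$, with equality forcing $\zeta$ to be a maximiser, hence a translated Lamb dipole.

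\textbf{Compactness and the stability argument.} The same concentration--compactness input yields, for any maximising sequence $(\zeta^n)\subset K_{Z,P}$ with $E(\zeta^n)\to M(Z,P)$, horizontal translations $\tau_n=(t_n,0)$ such that $\zeta^n(\cdot-\tau_n)\rightharpoonup\omega_L$ weakly in $L^2(\mathbb{R}^2_+)$; since $\|\zeta^n\|_{L^2}\to\|\omega_L\|_{L^2}$, the convergence is strong in $L^2$, and then $\|x_2(\zeta^n(\cdot-\tau_n)-\omega_L)\|_{L^1}\to0$ too (on a large ball enclosing $\operatorname{supp}\omega_L$ it is the $L^2_{\mathrm{loc}}$ convergence, while the outer tail $\int_{|x|>R}x_2\zeta^n=P-\int_{|x|<R}x_2\zeta^n\to0$ is nonnegative and so controls itself --- here $\zeta^n\ge0$ is essential). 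Now suppose Theorem~\ref{t: mthm} fails: there are $\varepsilon_0>0$ and data $\zeta_0^n\ge0$ with $\inf_y\|\zeta_0^n-\omega_L(\cdot+y)\|_{L^2}+|P(\zeta_0^n)-P|\to0$ as in \eqref{eq: S1}, whose global weak solutions $\zeta^n$ of \eqref{eq: Euler} --- which, from the well-posedness theory for odd, nonnegative $L^2$ vorticity with $x_2\zeta_0\in L^1$ (\textit{cf.}\ \cite{AC22}), keep $\zeta^n(t)\ge0$ on $\mathbb{R}^2_+$ and $x_2\zeta^n(t)\in L^1$, satisfy $\|\zeta^n(t)\|_{L^2}\le\|\zeta_0^n\|_{L^2}$ and $E(\zeta^n(t))=E(\zeta_0^n)$, and conserve $P$ by antisymmetry of the Biot--Savart kernel, $\iint k_2(x-y)\zeta\zeta=0$ --- violate \eqref{eq: S2} at some time $t_n$. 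By continuity of $Z,P,E$ in the data (for $E$ through the above Green-function/$L^{4/3}$ estimate), $Z(\zeta_0^n)\to Z$, $P(\zeta_0^n)\to P$, $E(\zeta_0^n)\to E(\omega_L)=M(Z,P)$; combining $E(\zeta^n(t_n))=E(\zeta_0^n)\to M(Z,P)$ with the improved energy inequality at $\zeta^n(t_n)$ (using $Z(\zeta^n(t_n))\le Z(\zeta_0^n)$, $P(\zeta^n(t_n))=P(\zeta_0^n)$) gives $E(\zeta^n(t_n))\to M(Z,P)$. After a harmless rescaling placing $\zeta^n(t_n)$ exactly in $K_{Z,P}$ (scaling factors $\to1$), the compactness statement applies and gives $\zeta^n(t_n)\to\omega_L$, along a subsequence and up to horizontal translation, in $L^2$ and in the weighted $L^1$ --- contradicting the choice of $t_n$.

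\textbf{Main obstacle.} The crux is the first part: the sharp improved energy inequality and the variational characterisation on $K_{Z,P}$ rather than on a rearrangement class. Two points are delicate --- (a) running concentration--compactness when the absence of an $L^1$ bound lets a piece of a maximising sequence carry impulse while its $L^2$-enstrophy, hence its energy, vanishes, which is why the weighted $L^{4/3}$/Green-function estimate is needed; and (b) identifying every maximiser as an honest Lamb dipole, resting on the Euler--Lagrange equation, uniqueness for the associated semilinear free-boundary problem (positivity set a disc), and the decay of $\psi_{\zeta^\ast}$ available from finite energy alone, without (F). The remaining PDE input --- existence of global weak solutions retaining nonnegativity, the weighted integrability, the enstrophy bound, energy conservation, and impulse conservation for merely $L^2$ vorticity --- is comparatively routine but must be set up with care.
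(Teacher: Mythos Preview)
Your proposal is correct and runs the same overall scheme as the paper --- variational characterization of the Lamb dipole via a sharp energy inequality, compactness of extremizing sequences by concentration--compactness (strict sub/super-additivity kills dichotomy, the energy inequality kills vanishing), then the standard contradiction argument using conservation of $E$, $Z$, $P$ for weak solutions. The one genuine difference is the choice of variational problem. You maximize $E$ under the \emph{two} constraints $Z(\zeta)\le Z$, $P(\zeta)=P$, deriving the homogeneous form $M(Z,P)=b\sqrt Z\,P$ from the two-parameter scaling; the paper instead \emph{minimizes} the penalized functional $I_\lambda[\omega]=\tfrac1{2\lambda}Z(\omega)-E(\omega)$ under the \emph{single} constraint $P(\omega)=\mu$, with $\lambda$ entering as a parameter rather than a Lagrange multiplier. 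The paper's one-constraint route is somewhat cleaner: the minimum is immediately quadratic, $\mathcal I_{\mu,\lambda}=\mu^2\mathcal I_{1,\lambda}$ with $\mathcal I_{1,\lambda}<0$, so strict subadditivity $\mathcal I_\mu<\mathcal I_\alpha+\mathcal I_{\mu-\alpha}$ is just $\mu^2>\alpha^2+(\mu-\alpha)^2$; the Euler--Lagrange relation has a single multiplier $W$ and directly gives $\omega=(\psi-Wx_2)_+$; and there is no need to check that an inequality constraint is active. Your two-constraint route works equally well --- the superadditivity $\sqrt{Z_1+Z_2}\,(P_1+P_2)>\sqrt{Z_1}\,P_1+\sqrt{Z_2}\,P_2$ is elementary, and the borderline dichotomy case where one piece loses all its enstrophy is, as you note, killed by the same energy inequality --- but it carries an extra multiplier and the (routine) verification that the maximizer saturates $Z$. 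Both formulations land on the same semilinear free-boundary problem, and the identification of the maximizer with $\omega_L$ is the same argument in either case (lift to $\mathbb R^4$, apply Fraenkel's moving-plane symmetry, solve the Bessel ODE on the resulting disc). One small caveat: the existence of global weak solutions with the needed conservation laws for data merely in $L^2$ with $x_2\zeta_0\in L^1$ (no $L^1$ on $\zeta_0$) is not in \cite{AC22} as you cite; the paper supplies this separately via DiPerna--Lions renormalization, and exact conservation of $Z$ (not just the inequality you use) comes for free from that.
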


\begin{figure}[h]
  \centering
\rotatebox{90}{  \includegraphics[scale=0.15]{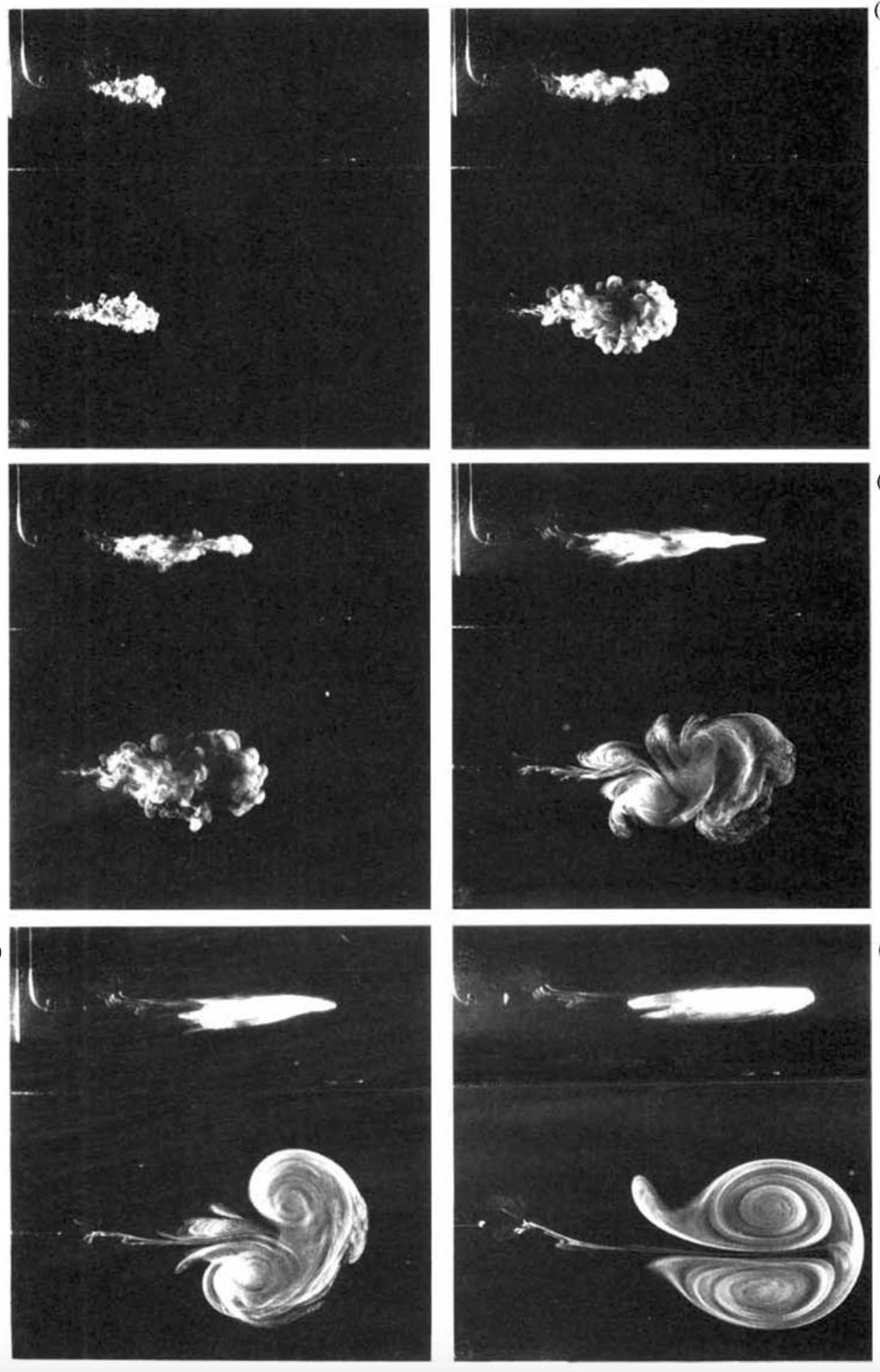}} 
  \caption{The emergence of a dipole vortex in a stratified flow created by a pulsed horizontal injection. Each photograph presents a top view and a side view. From \cite{FV94}. The figure has been rotated by $90^\circ$. Licensed under CC BY 4.0. }
    \label{f: VD}
\end{figure}

The orbital stability of traveling wave solutions to the 2D Euler equations \eqref{eq: Euler} was first established in Burton--Lopes--Lopes \cite{BNL13} for a large class of vortex-pairs by using a rearrangement of functions. Burton \cite{B21} showed the orbital stability of vortex pairs by using a rearrangement with the norm $||\zeta||_{L^{p}\cap L^{1}(\mathbb{R}^{2}_{+})}+|\int_{\mathbb{R}^{2}_{+}} x_2 \zeta dx|$ for $p>2$ by assuming (O), (N), and the compactness of the support of $\zeta_0$. Wang \cite[Theorem 5.1]{Wang24} deduced the orbital stability of the Lamb dipole \eqref{eq: Lamb} from the stability result of \cite{B21} and the variational characterization of \cite{Burton05b}. More recently, the work  \cite[Theorem 1.2]{Wang25} showed the orbital stability of a truncated Lamb dipole in a unit disk (in the sense of up to rotation) for a general initial disturbance without assuming odd symmetry (O) and non-negative conditions (N). One of the difficulties in removing the conditions (O) and (N) is the lack of variational formulations for vortex pairs without using those conditions in $\mathbb{R}^{2}$.

The orbital stability of vortex pairs has been obtained as the stability of a set of minimizers (or maximizers) for a certain variational problem, and it is, in general, a question of whether a set of minimizers is a translation of a unique minimizer. The classical rigidity theorems establish the uniqueness of large vortex pairs and vortex rings, such as the Lamb dipole \cite{Burton96}, \cite{Burton05b}, Hill's spherical vortex \cite{AF86}, and Norbury's rings \cite{AF88}. The work \cite{Choi24} showed the stability of Hill's spherical vortex in the axisymmetric Euler equations without swirls, assuming the finite mass condition for initial disturbances; see also \cite{CQZZ2} for the stability of Norbury's rings. 

Recently, Cao--Qin--Zhang--Zhou \cite[Theorem 1.8]{CQZZ} established the uniqueness of concentrated vortex pairs and deduced their orbital stability from the stability result of \cite{BNL13}. See also Cao--Lai--Qin--Zhang--Zhou \cite[Theorem 1.2]{CLQZZ} for uniqueness and stability of thin-cored axisymmetric vortex rings without swirls. It is a question of whether the condition (F) can be removed for the stability of vortex pairs other than the Lamb dipole \eqref{eq: Lamb}, cf. \cite[Theorem 1.4]{AC22}.

We also mention the recent work \cite{DG} on long-time approximation of small viscous flows originating from point vortex pairs by viscous dipole solutions (two Lamb--Oseen vortices). See also \cite{Ga11}. 

\subsection{Research on dipole vortices}

Let us briefly discuss dipole vortices and the long-time behavior of solutions to the 2D Euler equations.

\subsubsection{Physical backgrounds}

In geophysical fluid dynamics, large dipole vortices are called modons \cite{Stern}. There exist modon solutions (including \eqref{eq: Lamb} as a particular case) also in the beta-plane equations and quasi-geostrophic shallow water/Charney--Hasegawa--Mima equations \cite{LR76}, \cite[5.6]{PP}. Modons also exist for the Euler equations on a rotating sphere; see \cite{PG15} for a review. The stability theory for the Euler equations on a rotating sphere has been developed for linear wave solutions (Rossby--Haurwitz waves) in \cite{Taylor16}, \cite{CG}, \cite{CWZ}, \cite{CGLZ}.

\subsubsection{Large vortex dynamics}

In general, describing long-time dynamics of solutions to the 2D Euler equations is a highly challenging problem. In the specific setting of the half-plane, there are a few general bounds on the large-scale features of solutions \cite{ISG99}, \cite{ILN03}. While there are large classes of traveling wave solutions, it is a highly non-trivial problem to demonstrate the existence of global-in-time solutions with non-trivial dynamical behavior. The existence of solutions converging to a separating pair of dipoles as $t\to\infty$ was obtained in \cite{DdPMP2} by the gluing method for the Euler equations; see also \cite{DdpMW}, \cite{DdPMP}. Moreover, the existence of time-periodic leapfrogging patches was proved in \cite{BHM}. 
On the other hand, the works \cite{CJY} and \cite{AJY} show the stability of multi-vortex solutions. Namely, there exist global-in-time unique solutions whose vorticity is concentrated on two separating Lamb dipoles \cite{CJY} and a chain of $N$ Lamb dipoles with no collisions \cite{AJY}.

\subsubsection{Numerical works}

There is quite a large literature on Lamb dipoles from computational and experimental fluid dynamics \cite{FV94}, \cite{Or92}, \cite{VF89}, \cite{NiRa}, \cite{KrXu21}, \cite{Protas}. The work \cite{NiRa} performed a numerical simulation of the Navier--Stokes equations for general initial data with nonzero impulse and observed the creation of a dipole structure, which is quite similar to the Lamb dipole \eqref{eq: Lamb}. While the Lamb dipole is not an exact traveling wave solution of the Navier--Stokes equations, the work \cite{NiRa} obtains a theoretical time-dependent ansatz of the viscous Lamb dipole by letting parameters $\lambda, W$ change in time, and shows that it is in remarkable agreement with results from direct numerical simulations. Recently, the work \cite{KrXu21} performed high-resolution numerical computations for the Lamb dipole in a large range of Reynolds numbers and studied the effects of convection on the dipole evolution. All of these numerical studies confirm filamentation behavior, creation of long and thin tails, emerging behind the Lamb dipole, cf. Figure \ref{f: VD}. 

\subsubsection{Small-scale formations}

The work \cite{CJ-Lamb} investigated the filamentation near the Lamb dipole \eqref{eq: Lamb}. It estimated the speed of the perturbations of \eqref{eq: Lamb} in the stability estimate (namely, $y$ in \eqref{eq: S2}) and proved linear-in-time filamentation for arbitrarily small and localized perturbations of \eqref{eq: Lamb}. In particular, the result in \cite{CJ-Lamb} gives infinite-time linear growth of the $W^{1,p}$-norm of the vorticity for all $1 \le p \le \infty$, showing instability of the Lamb dipole in $W^{1,p}$. More recent work \cite{JYZ} obtained superlinear growth of the $W^{1,\infty}$-norm for perturbations of \eqref{eq: Lamb} following the ideas of Denisov \cite{Den09} by using hyperbolic stagnation points in the moving frame. 
    
    In general, one may ask how fast the $W^{1,\infty}$-norm of the vorticity can grow in time for smooth initial data. Remarkably, Zlato\v{s} \cite{zlatos2025} recently obtained the optimal double exponential growth for the $W^{1,\infty}$-norm on the half-plane. Prior to this work, the double exponential growth rate was achieved only in bounded domains \cite{KS}, \cite{Xu}.

A relevant question to the filamentation is the regularity of solutions between two touching dipole vortices. The works \cite{Choi25} and \cite{HT} showed the existence of touching traveling dipole patches, in contrast to the touching continuous dipole \eqref{eq: Lamb}.

\subsubsection{Non-uniquness} We note that recently the Lamb dipole \eqref{eq: Lamb} was used in \cite{BCK} as a building block of a convex integration scheme for the 2D Euler equations in a periodic domain, resulting in the first non-uniqueness example with integrable vorticity. 

\subsection{The idea of the proof: the new energy inequality}

We show Theorem \ref{t: mthm} by a new variational characterization of  \eqref{eq: Lamb} without using mass. A heuristic idea is a dimensional balance between three quantities $E$, $Z$, and $P$ in \eqref{eq: C}\footnote{The dimensional balance $[E]=\sqrt{[Z]}[P]$ holds for all 2D flows since $[E]=L^{4}/T^{2}$, $[Z]=L^{2}/T^{2}$, $[P]=L^{3}/T$ for the length $L$ and time $T$ by $[u]=L/T$, $[\omega]=1/T$, and $[dx]=L^{2}$.}. Namely, by $Z/(2\lambda)=WP/2$ and $P=c_0^{2}\pi W/\lambda$, 
\begin{align*}
E=\frac{1}{2\lambda}Z+\frac{W}{2}P=\sqrt{\frac{Z}{\lambda}}\sqrt{WP}=\frac{1}{c_0\sqrt{\pi}}\sqrt{Z}P.
\end{align*}
By using the norms,   
\begin{align}
||\nabla \psi_{L}||_{L^{2}(\mathbb{R}^{2}_{+})}
=\sqrt{\frac{2}{c_0\sqrt{\pi}}}||\omega_{L}||_{L^{2}(\mathbb{R}^{2}_{+})}^{\frac{1}{2}}||x_2\omega_{L}||_{L^{1}(\mathbb{R}^{2}_{+})}^{\frac{1}{2}}.   \label{eq: ID}
\end{align}
In the recent work \cite[Corollary 2.5]{AJY}, the following new energy inequality was obtained
\begin{align}
||\nabla \psi ||_{L^{2}(\mathbb{R}^{2}_{+})}\leq C ||\omega||_{L^{2}(\mathbb{R}^{2}_{+})}^{\frac{1}{2}}||x_2\omega||_{L^{1}(\mathbb{R}^{2}_{+})}^{\frac{1}{2}},     \label{eq: IEI}
\end{align}
for $\omega \in L^{2}(\mathbb{R}^{2}_{+})$ such that $x_2\omega\in L^{1}(\mathbb{R}^{2}_{+})$ and $\psi=(-\Delta_{D})^{-1}\omega$ with some constant $C$ by using the Green function. The inequality \eqref{eq: IEI} holds for all $\omega$ with the constant
\begin{align}
C_{*}=\sqrt[4]{\frac{3}{8\pi}},   \label{eq: HLS0}
\end{align}
by the Hardy-Littlewood-Sobolev inequality in $\mathbb{R}^{4}$ and the isometry between homogeneous Sobolev spaces on $\mathbb{R}^{2}_{+}$ and $\mathbb{R}^{4}$. The energy inequality \eqref{eq: IEI} enables one to formulate the following  minimization problem without using a mass constraint, cf. \cite{AC22}: 
\begin{align}
\mathcal{I}_{\mu,\lambda}=\inf_{K_{\mu}}I_{\lambda},  \label{eq: MinP}
\end{align}
for the functional  
\begin{align*}
I_{\lambda}[\omega]=\frac{1}{2\lambda}\int_{\mathbb{R}^{2}_{+}}\omega^{2}dx-\frac{1}{2}\int_{\mathbb{R}^{2}_{+}}|\nabla \psi|^{2}dx,\quad \psi=(-\Delta_D)^{-1}\omega,\quad \lambda>0,
\end{align*}
and the admissible set   
\begin{align*}
K_{\mu}=\left\{\omega\in L^{2}(\mathbb{R}^{2}_{+}) \ \middle|\ \int_{\mathbb{R}^{2}_{+}}x_2\omega dx=\mu,\ \omega\geq 0\  \right\},\quad \mu>0.
\end{align*}
Namely, $\mathcal{I}_{\mu,\lambda}$ is bounded from below for $\lambda,\mu>0$ thanks to \eqref{eq: IEI}. Our main task is to show that all minimizers to \eqref{eq: MinP} are translations of the Lamb dipoles \eqref{eq: Lamb} for the $x_1$-variable, and the minimum is the constant 
\begin{align}
\mathcal{I}_{\mu,\lambda}=-\frac{1}{2c_0^{2}\pi}\mu^{2}\lambda.  \label{eq: MinM}
\end{align}
The minimum \eqref{eq: MinM} provides a sharp constant of \eqref{eq: IEI} smaller than \eqref{eq: HLS0}.

\begin{thm}\label{t: SI}
The inequality
\begin{align}
||\nabla \psi||_{L^{2}(\mathbb{R}^{2}_{+})}\leq \sqrt{\frac{2}{c_0\sqrt{\pi}}}||\omega||^{\frac{1}{2}}_{L^{2}(\mathbb{R}^{2}_{+})}||x_2\omega||^{\frac{1}{2}}_{L^{1}(\mathbb{R}^{2}_{+})}   \label{eq: SI}
\end{align}
holds for non-negative $\omega\in L^{2}(\mathbb{R}^{2}_{+})$ such that $x_2\omega\in L^{1}(\mathbb{R}^{2}_{+})$ and $\psi=(-\Delta_D)^{-1}\omega$. The constant $\sqrt{\frac{2}{c_0\sqrt{\pi}}}$ is sharp and its optimizer is the Lamb dipole \eqref{eq: Lamb}. The same inequality holds for $\omega$ without the sign condition.
\end{thm}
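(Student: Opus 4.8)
The plan is to derive \eqref{eq: SI} from the variational problem \eqref{eq: MinP}. The sharp inequality \eqref{eq: SI}, its sharpness, and the characterization of optimizers all follow from two facts: (i) $\mathcal{I}_{\mu,\lambda}=-\mu^{2}\lambda/(2c_0^{2}\pi)$ for all $\mu,\lambda>0$, i.e. \eqref{eq: MinM}; and (ii) every minimizer in \eqref{eq: MinP} is an $x_1$-translate of the Lamb dipole \eqref{eq: Lamb} with $W=\mu\lambda/(c_0^{2}\pi)$. Indeed, granting (i): for non-negative $\omega$ with $x_2\omega\in L^{1}(\mathbb{R}^{2}_{+})$, setting $\mu=\|x_2\omega\|_{L^{1}(\mathbb{R}^{2}_{+})}$ (the case $\mu=0$ being trivial) we have $\omega\in K_{\mu}$, so $I_{\lambda}[\omega]\geq \mathcal{I}_{\mu,\lambda}$ gives
\begin{equation*}
\|\nabla\psi\|_{L^{2}(\mathbb{R}^{2}_{+})}^{2}\leq \frac{1}{\lambda}\|\omega\|_{L^{2}(\mathbb{R}^{2}_{+})}^{2}+\frac{\mu^{2}\lambda}{c_0^{2}\pi}\qquad\text{for every }\lambda>0,
\end{equation*}
and optimizing over $\lambda$ (optimal value $\lambda=c_0\sqrt{\pi}\,\|\omega\|_{L^{2}(\mathbb{R}^{2}_{+})}/\mu$) yields exactly \eqref{eq: SI}. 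The identity \eqref{eq: ID}, equivalently \eqref{eq: C}, shows the Lamb dipole attains equality, hence the constant is sharp, and (ii) gives the uniqueness of optimizers. The version of \eqref{eq: SI} without the sign condition reduces to the non-negative one: since $\psi$ vanishes on $\partial\mathbb{R}^{2}_{+}$ we have $\|\nabla\psi\|_{L^{2}(\mathbb{R}^{2}_{+})}^{2}=\int_{\mathbb{R}^{2}_{+}}\omega\psi\,dx$, and positivity of the Dirichlet Green function of $\mathbb{R}^{2}_{+}$ gives $|\psi|\leq\widetilde\psi:=(-\Delta_D)^{-1}|\omega|$ pointwise, whence $\int_{\mathbb{R}^{2}_{+}}\omega\psi\,dx\leq\int_{\mathbb{R}^{2}_{+}}|\omega|\widetilde\psi\,dx=\|\nabla\widetilde\psi\|_{L^{2}(\mathbb{R}^{2}_{+})}^{2}$, and one applies the non-negative inequality to $|\omega|$.

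So it remains to prove (i) and (ii). That $\mathcal{I}_{\mu,\lambda}>-\infty$ is immediate from \eqref{eq: IEI}--\eqref{eq: HLS0}, which on $K_{\mu}$ gives $I_{\lambda}[\omega]\geq \tfrac{1}{2\lambda}\|\omega\|_{L^{2}}^{2}-\tfrac{C_{*}^{2}\mu}{2}\|\omega\|_{L^{2}}$, bounded below and forcing minimizing sequences to be bounded in $L^{2}(\mathbb{R}^{2}_{+})$; also $\mathcal{I}_{\mu,\lambda}\leq I_{\lambda}[\omega_{L}]=-\mu^{2}\lambda/(2c_0^{2}\pi)<0$ by using the Lamb dipole as a competitor. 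I would then show $\mathcal{I}_{\mu,\lambda}$ is attained by a concentration--compactness argument in the spirit of \cite{AC22}: $I_{\lambda}$ and the constraints are invariant under $x_1$-translations, the exact homogeneity $\mathcal{I}_{\mu,\lambda}=\mu^{2}\mathcal{I}_{1,\lambda}$ follows from rescaling $\omega\mapsto c\omega$, and since $\mathcal{I}_{1,\lambda}<0$ this gives the strict subadditivity $\mathcal{I}_{\mu,\lambda}<\mathcal{I}_{\mu',\lambda}+\mathcal{I}_{\mu-\mu',\lambda}$ for $0<\mu'<\mu$, ruling out dichotomy; vanishing and escape of vorticity to $x_2=+\infty$ are excluded using the impulse normalization $\int x_2\omega_n\,dx=\mu$ together with $x_2\omega_n\geq0$. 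Hence, after an $x_1$-translation, a minimizing sequence converges strongly in $L^{2}(\mathbb{R}^{2}_{+})$ to a minimizer $\omega_{*}\in K_{\mu}$.

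Next comes the Euler--Lagrange equation and the rigidity. Varying $I_{\lambda}$ against admissible perturbations, with impulse multiplier $\gamma$ and the obstacle $\omega\geq 0$, yields $\omega_{*}=\lambda(\psi_{*}+\gamma x_2)_{+}$, $\psi_{*}=(-\Delta_D)^{-1}\omega_{*}$; using $\mathcal{I}_{\mu,\lambda}<0$, $\psi_{*}\geq 0$, and the decay of $\psi_{*}$ one checks $\gamma<0$ and writes $\gamma=-W$ with $W>0$. Then $\omega_{*}$ is supported in a bounded set $D\subset\mathbb{R}^{2}_{+}$ on which $\Phi:=\psi_{*}-Wx_2$ is a positive solution of $-\Delta\Phi=\lambda\Phi$ vanishing on $\partial D$, so $\lambda$ is the first Dirichlet eigenvalue of every component of $D$; moreover, using $\psi_{*}=\lambda^{-1}\omega_{*}+Wx_2$ on $D$ one computes $\mathcal{I}_{\mu,\lambda}=I_{\lambda}[\omega_{*}]=-\tfrac12 W\mu$ directly. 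The heart of the matter is to show $D$ is, up to an $x_1$-translation, the half-disk $\{r<a\}\cap\mathbb{R}^{2}_{+}$ with $\sqrt{\lambda}\,a=c_0$ and that $\Phi$ is the corresponding Bessel profile, so that $\omega_{*}$ is the Lamb dipole \eqref{eq: Lamb}: the image charge in $(-\Delta_D)^{-1}$ already forces $\overline D$ to meet $\partial\mathbb{R}^{2}_{+}$, and a symmetrization/moving-plane argument combined with the minimality -- or, alternatively, an adaptation of the classical uniqueness theorems of Burton \cite{Burton96}, \cite{Burton05b} -- pins down the geometry and the profile, the normalization \eqref{eq: LambConst} being forced by matching the interior Bessel solution with the exterior harmonic field $-W(r-a^{2}/r)\sin\theta$ across the free boundary. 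Once $\omega_{*}$ is identified, the impulse constraint gives $W=\mu\lambda/(c_0^{2}\pi)$, i.e. $P=\mu$ in \eqref{eq: C}, hence $\mathcal{I}_{\mu,\lambda}=-\tfrac12 W\mu=-\mu^{2}\lambda/(2c_0^{2}\pi)$, which is (i); since the argument applies to an arbitrary minimizer, it also gives (ii).

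The main obstacle is the rigidity step: compactness only produces a minimizer solving a semilinear free-boundary elliptic problem, and identifying the geometry of its support as a single boundary-centered half-disk -- rather than a disconnected or otherwise more complicated domain with the same first eigenvalue -- requires either a symmetrization argument adapted to the Dirichlet condition on $\partial\mathbb{R}^{2}_{+}$ or the delicate classical uniqueness machinery for the Lamb dipole. A secondary, more technical difficulty is the concentration--compactness analysis, specifically ruling out escape of vorticity in the $x_2$-direction, where the non-negativity of $x_2\omega_n$ together with the impulse normalization is essential.
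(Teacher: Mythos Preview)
Your proposal is correct and follows essentially the same route as the paper: the sharp inequality is obtained by inserting the minimum value \eqref{eq: MinM} into $I_{\lambda}[\omega]\geq\mathcal{I}_{\mu,\lambda}$ and optimizing over $\lambda$, the sign-free case is reduced to the non-negative one via positivity of the Dirichlet Green function, and the identification of minimizers proceeds through the Euler--Lagrange relation $\omega=(\psi-Wx_2)_{+}$ together with a rigidity argument. The one point where the paper is more specific than your sketch is the rigidity step: rather than working directly in $\mathbb{R}^{2}_{+}$, the paper lifts $\psi$ to $\varphi(y)=\psi(y_4,|y'|)/|y'|$ on $\mathbb{R}^{4}$ via the isometry \eqref{eq: ISO}, obtains a semilinear equation $-\Delta\varphi=(\varphi-W)_{+}$ with compactly supported right-hand side and decay at infinity, and then applies Fraenkel's moving-plane theorem in $\mathbb{R}^{4}$ to conclude radial symmetry, from which the Bessel profile and $a=c_0/\sqrt{\lambda}$ follow by an ODE argument.
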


The one constraint problem \eqref{eq: MinP} yields a quadratic minimum for $\mu$ and enables one to obtain compactness of the minimizing sequence via the strict subadditivity of the minimum in Lions' concentration-compactness principle, cf. \cite{BNL13}, \cite{B21}, \cite{AC22}. We give a proof for the compactness of the minimizing sequence to \eqref{eq: MinP} using strict subadditivity of the minimum in Appendix A. We show the existence of global weak solutions to \eqref{eq: Euler} for $\zeta_0\in L^{2}(\mathbb{R}^{2}_{+})$ satisfying $x_2\zeta_0\in L^{1}(\mathbb{R}^{2}_{+})$ without assuming the finite mass and give a proof for the stability (Theorem \ref{t: mthm}) in Appendix B.\\

\subsection{Acknowledgements}
KA has been supported by the JSPS through the Grant in Aid for Scientific Research (C) 24K06800, MEXT Promotion of Distinctive Joint Research Center Program JPMXP0723833165, and Osaka Metropolitan University Strategic Research Promotion Project (Development of International Research Hubs). KC has been supported by the NRF grant from the Korean government (MSIT), No. RS-2023-00274499. IJ has been supported by the NRF grant from the Korea government (MSIT), RS-2024-00406821, No. 2022R1C1C1011051.

\section{The variational principle}

We formulate the variational problem \eqref{eq: MinP} and show that all minimizers to \eqref{eq: MinP} are translations of the Lamb dipoles \eqref{eq: Lamb}, and the minimum is the constant \eqref{eq: MinM}. We then deduce Theorem \ref{t: SI} from \eqref{eq: MinM}.

\subsection{The energy inequality}

We set the stream function associated with vorticity in a half plane using the Green function of the Dirichlet problem
\begin{align}
\psi(x)=(-\Delta_{D})^{-1}\omega=\int_{\mathbb{R}^{2}_{+}}G(x,y)\omega(y)\dd y, \quad G(x,y)=\frac{1}{4\pi}\log{\left(1+\frac{4x_2y_2}{|x-y|^{2}} \right)}.  \label{eq: SF} 
\end{align}
By $0<\log(1+t)\lesssim  t^{\alpha}$ for $\alpha\in (0,1]$ and $t> 0$, the Green function satisfies the pointwise bound
\begin{align}
0<G(x,y)\lesssim \frac{x_2^{\alpha}y_2^{\alpha}}{|x-y|^{2\alpha}},\quad x,y\in \mathbb{R}^{2}_{+}.   \label{eq: Green}
\end{align}
We show the energy inequality from the Hardy-Littlewood-Sobolev inequality in $\mathbb{R}^{4}$.

\begin{lem}\label{l: SEI}
The inequality 
\begin{align}
||\nabla \psi ||_{L^{2}(\mathbb{R}^{2}_{+}) }\leq \sqrt[4]{\frac{3}{8\pi}}||x_2 \omega ||_{L^{1}(\mathbb{R}^{2}_{+})}^{\frac{1}{2}} || \omega ||_{L^{2}(\mathbb{R}^{2}_{+})}^{\frac{1}{2}}, \label{eq: SEI}
\end{align}
holds for $\omega\in L^{2}(\mathbb{R}^{2}_{+})$ satisfying  $x_2\omega \in L^{1}(\mathbb{R}^{2}_{+})$ and $\psi=(-\Delta_{D})^{-1}\omega$. 
\end{lem}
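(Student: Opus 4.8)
The plan is to convert the half-plane Dirichlet energy into a Riesz potential energy on $\mathbb{R}^{4}$ and then invoke the sharp Hardy--Littlewood--Sobolev inequality there. Since $G>0$ by \eqref{eq: Green}, an integration by parts (justified first for $\omega\in C_c^{\infty}(\mathbb{R}^{2}_{+})$ and then by density) shows
\[
\|\nabla\psi\|_{L^{2}(\mathbb{R}^{2}_{+})}^{2}=\int_{\mathbb{R}^{2}_{+}}\omega\psi\,dx=\iint G(x,y)\,\omega(x)\omega(y)\,dx\,dy\le\iint G(x,y)\,|\omega(x)|\,|\omega(y)|\,dx\,dy,
\]
the double integrals being over $\mathbb{R}^{2}_{+}\times\mathbb{R}^{2}_{+}$; hence it suffices to treat $\omega\ge0$, and the sign condition is irrelevant. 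Identify $\mathbb{R}^{4}=\mathbb{R}_{x_1}\times\mathbb{R}^{3}_{\tilde x}$, put $\rho=|\tilde x|$, and to $\omega$ on $\mathbb{R}^{2}_{+}$ associate the axisymmetric function $\Omega(x_1,\tilde x)=\rho^{-1}\omega(x_1,\rho)$ on $\mathbb{R}^{4}$. The crux is the kernel identity
\[
\iint_{\mathbb{R}^{2}_{+}\times\mathbb{R}^{2}_{+}}G(x,y)\,\omega(x)\omega(y)\,dx\,dy=\frac{1}{4\pi}\iint_{\mathbb{R}^{4}\times\mathbb{R}^{4}}\frac{\Omega(X)\,\Omega(Y)}{4\pi^{2}|X-Y|^{2}}\,dX\,dY,
\]
in which $(4\pi^{2}|X-Y|^{2})^{-1}$ is the fundamental solution of $-\Delta$ on $\mathbb{R}^{4}$.

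I would establish this identity in either of two equivalent ways. Structurally, the elementary relation $\Delta_{\mathbb{R}^{4}}(\rho^{-1}h)=\rho^{-1}(\partial_{x_1}^{2}+\partial_{\rho}^{2})h$ for functions $h=h(x_1,\rho)$ (interpreting $\rho^{-1}h$ as an axisymmetric function on $\mathbb{R}^{4}$) shows that $\Psi(x_1,\tilde x):=\rho^{-1}\psi(x_1,\rho)$ solves $-\Delta_{\mathbb{R}^{4}}\Psi=\Omega$ on $\mathbb{R}^{4}$; polarising the ensuing identity $\|\nabla\Psi\|_{L^{2}(\mathbb{R}^{4})}^{2}=4\pi\|\nabla\psi\|_{L^{2}(\mathbb{R}^{2}_{+})}^{2}$ is exactly the announced isometry between homogeneous Sobolev spaces, $\Psi$ is then the Newtonian potential of $\Omega$, and the identity follows from $\int_{\mathbb{R}^{2}_{+}}\omega\psi\,dx=\tfrac{1}{4\pi}\int_{\mathbb{R}^{4}}\Omega\Psi\,dX$ after writing $dX=4\pi\rho^{2}\,d\rho\,dx_1$ on axisymmetric integrands. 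Directly, one integrates $|X-Y|^{-2}$ over the product of spheres $\{|\tilde x|=x_2\}\times\{|\tilde y|=y_2\}$ using $\int_{0}^{\pi}\frac{\sin\gamma\,d\gamma}{a-b\cos\gamma}=\frac1b\log\frac{a+b}{a-b}$, which reproduces $\frac{1}{4\pi}\log\!\bigl(1+\frac{4x_2y_2}{|x-y|^{2}}\bigr)=G(x,y)$ once the constants are tracked. This is where essentially all the work lies, and the delicate point is the behaviour of $\Psi=\rho^{-1}\psi$ across the subspace $\{\rho=0\}$: one must check that it extends to an $H^{1}_{\mathrm{loc}}(\mathbb{R}^{4})$ function carrying no distributional mass on $\{\rho=0\}$ — which uses crucially that $\psi$ vanishes linearly on $\partial\mathbb{R}^{2}_{+}$, so that $\rho^{-1}\psi$ stays bounded — and that $\Psi$ decays like $|X|^{-2}$ at infinity, so that $\Psi$ really is the Newtonian potential of $\Omega$. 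It is cleanest to verify these facts for $\omega\in C_c^{\infty}(\mathbb{R}^{2}_{+})$, where $\psi$ is smooth up to $\partial\mathbb{R}^{2}_{+}$ and $\Psi=O(|X|^{-2})$ at infinity, and then to pass to general $\omega$ by density, using the $\|x_2\omega\|_{L^{1}}$ bound from the next step.

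The remaining steps are routine. Using $dX=4\pi\rho^{2}\,d\rho\,dx_1$ once more, then $\omega^{4/3}x_2^{2/3}=\omega^{2/3}(x_2\omega)^{2/3}$ and Hölder with exponents $3$ and $3/2$,
\begin{align*}
\|\Omega\|_{L^{4/3}(\mathbb{R}^{4})}^{4/3}&=4\pi\int_{\mathbb{R}^{2}_{+}}\omega^{4/3}x_2^{2/3}\,dx=4\pi\int_{\mathbb{R}^{2}_{+}}\omega^{2/3}(x_2\omega)^{2/3}\,dx\\
&\le4\pi\,\|\omega\|_{L^{2}(\mathbb{R}^{2}_{+})}^{2/3}\|x_2\omega\|_{L^{1}(\mathbb{R}^{2}_{+})}^{2/3},
\end{align*}
so that $\|\Omega\|_{L^{4/3}(\mathbb{R}^{4})}\le(4\pi)^{3/4}\|\omega\|_{L^{2}(\mathbb{R}^{2}_{+})}^{1/2}\|x_2\omega\|_{L^{1}(\mathbb{R}^{2}_{+})}^{1/2}$; in particular $\Omega\in L^{4/3}(\mathbb{R}^{4})$, which a posteriori makes $\|\nabla\psi\|_{L^{2}(\mathbb{R}^{2}_{+})}$ finite. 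Feeding this bound into the sharp Hardy--Littlewood--Sobolev inequality on $\mathbb{R}^{4}$ with $\lambda=2$ and $p=q=4/3$ (Lieb's sharp constant being $\tfrac{\pi\sqrt6}{2}$), applied to the right-hand side of the kernel identity,
\begin{align*}
\|\nabla\psi\|_{L^{2}(\mathbb{R}^{2}_{+})}^{2}&\le\frac{1}{16\pi^{3}}\cdot\frac{\pi\sqrt6}{2}\,\|\Omega\|_{L^{4/3}(\mathbb{R}^{4})}^{2}\\
&\le\frac{(4\pi)^{3/2}}{16\pi^{3}}\cdot\frac{\pi\sqrt6}{2}\,\|\omega\|_{L^{2}(\mathbb{R}^{2}_{+})}\|x_2\omega\|_{L^{1}(\mathbb{R}^{2}_{+})}=\sqrt{\tfrac{3}{8\pi}}\,\|\omega\|_{L^{2}(\mathbb{R}^{2}_{+})}\|x_2\omega\|_{L^{1}(\mathbb{R}^{2}_{+})},
\end{align*}
and taking square roots yields \eqref{eq: SEI}.
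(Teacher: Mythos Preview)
Your proof is correct and follows essentially the same route as the paper: both transfer the half-plane Dirichlet energy to $\mathbb{R}^{4}$ via the isometry $\psi\mapsto\rho^{-1}\psi$ and then apply Lieb's sharp Hardy--Littlewood--Sobolev inequality followed by H\"older (the paper phrases HLS as $\|(-\Delta)^{-1/2}g\|_{L^{2}}\le\tfrac{3^{1/4}}{2^{5/4}\sqrt\pi}\|g\|_{L^{4/3}}$ and applies H\"older on $\mathbb{R}^{4}$, you use the equivalent bilinear form and apply H\"older on $\mathbb{R}^{2}_{+}$, but the computations coincide). The paper simply cites a reference for the isometry rather than sketching its justification as you do.
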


\begin{proof}
We apply the Hardy-Littlewood-Sobolev inequality with the sharp constant \cite[Corollary 3.2 (i)]{Lieb83} and H\"older's inequality to estimate  
\begin{align}
\left\|(-\Delta)^{-\frac{1}{2}}g\right\|_{L^{2}(\mathbb{R}^{4}) }
\leq \frac{3^{\frac{1}{4}}}{2^{\frac{5}{4}}\sqrt{\pi}} ||g||_{L^{\frac{4}{3}}(\mathbb{R}^{4}) }
\leq \frac{3^{\frac{1}{4}}}{2^{\frac{5}{4}}\sqrt{\pi}} ||g||_{L^{2}(\mathbb{R}^{4}) }^{\frac{1}{2}}||g||_{L^{1}(\mathbb{R}^{4}) }^{\frac{1}{2}}.  \label{eq: HLS}
\end{align}
For $\psi \in \dot{H}^{1}_{0}(\mathbb{R}^{2}_{+})=\{\psi |\ \nabla \psi\in L^{2}(\mathbb{R}^{2}_{+}),\ \psi(x_1,0)=0\ \}$, we set the function in $\mathbb{R}^{4}$ by 
\begin{align}
\varphi(y)=\frac{\psi(y_4,|y'|)}{|y'|},\quad y=(y',y_4).   \label{eq: ISO}
\end{align}
Then, the map $\dot{H}^{1}_{0}(\mathbb{R}^{2}_{+})\ni \psi\longmapsto \varphi \in \dot{H}^{1}_{\textrm{axi}}(\mathbb{R}^{4})$ is isometrically isomorphic \cite{Yang91} and 
\begin{align*}
||\nabla \varphi||_{L^{2}(\mathbb{R}^{4})}&=\sqrt{4\pi} ||\nabla \psi||_{L^{2}(\mathbb{R}^{2}_{+})},\\
||\Delta \varphi||_{L^{2}(\mathbb{R}^{4})}&=\sqrt{4\pi} \left\|\Delta \psi\right\|_{L^{2}(\mathbb{R}^{2}_{+})},\\
||\Delta \varphi||_{L^{1}(\mathbb{R}^{4})}&=4\pi \left\|x_2\Delta \psi\right\|_{L^{1}(\mathbb{R}^{2}_{+})},
\end{align*}
where $\dot{H}^{1}_{\textrm{axi}}(\mathbb{R}^{4})$ denotes the subspace of axisymmetrinc functions in $\dot{H}^{1}(\mathbb{R}^{4})$. By substituting $g=-\Delta\varphi$ into \eqref{eq: HLS} and using $||(-\Delta)^{1/2}\varphi||_{L^{2}}=||\nabla \varphi||_{L^{2}}$, the inequality \eqref{eq: SEI} follows. 
\end{proof}

\if{
\begin{rem}[Sharp constants]
The sharp constant of the Hardy--Littlewood-Sobolev inequality \eqref{eq: HLS} is attained by the bubble function 

\begin{align*}
g_{b}(y)=\frac{1}{(1+|y|^{2})^{3}}.
\end{align*}\\
The constant of the inequality 

\begin{align}
\left\|(-\Delta)^{-\frac{1}{2}}g\right\|_{L^{2}(\mathbb{R}^{4}) }
\leq \frac{3^{\frac{1}{4}}}{2^{\frac{5}{4}}\sqrt{\pi}} ||g||_{L^{2}(\mathbb{R}^{4}) }^{\frac{1}{2}}||g||_{L^{1}(\mathbb{R}^{4}) }^{\frac{1}{2}},  \label{eq: HLS2}
\end{align}\\
is sharp but not attained by the bubble function. Namely, $B<A$ for the constants

\begin{align*}
A&=\sup_{g\neq 0}\frac{\left\|(-\Delta)^{-\frac{1}{2}}g\right\|_{L^{2}(\mathbb{R}^{4}) }}
{||g||_{L^{\frac{4}{3}}(\mathbb{R}^{4}) } }
=\frac{\left\|(-\Delta)^{-\frac{1}{2}}g_b\right\|_{L^{2}(\mathbb{R}^{4}) }}
{||g_b||_{L^{\frac{4}{3}}(\mathbb{R}^{4}) } }=\frac{3^{\frac{1}{4}}}{2^{\frac{5}{4}}\sqrt{\pi}},\\
B&=\frac{\left\|(-\Delta)^{-\frac{1}{2}}g_b\right\|_{L^{2}(\mathbb{R}^{4}) }}
{||g_b||_{L^{2}(\mathbb{R}^{4}) }^{\frac{1}{2}}||g_b||_{L^{1}(\mathbb{R}^{4}) }^{\frac{1}{2}}}=\frac{5^{\frac{1}{4}}}{2\sqrt{3\pi}}. 
\end{align*}\\
For the Lamb dipole \eqref{eq: Lamb}, $g_{L}=-\Delta \varphi_{L}$ and $\varphi_{L}=\psi_{L}/x_2$ give 

\begin{align*}
 C=\frac{\left\|(-\Delta)^{-\frac{1}{2}}g_L\right\|_{L^{2}(\mathbb{R}^{4}) }}
{||g_L||_{L^{2}(\mathbb{R}^{4}) }^{\frac{1}{2}}||g_L||_{L^{1}(\mathbb{R}^{4}) }^{\frac{1}{2}}}
=  \frac{||\nabla \psi_{L}||_{L^{2}}}{(4\pi)^{\frac{1}{4}}||x_2\omega_L||_{L^{1}}^{\frac{1}{2}}||\omega_L||_{L^{2}}^{\frac{1}{2}} }=\frac{1}{\sqrt{c_0\pi}}.
\end{align*}\\
It turns out that $B<C<A$ by $2^{\frac{5}{2}} /\sqrt{3}=3.2660\cdots<c_0=3.8317\cdots<12/\sqrt{5}=5.3666\cdots$, and the bubble $g_b$ does not maximize the ratio $\left\|(-\Delta)^{-\frac{1}{2}}g \right\|_{L^{2}(\mathbb{R}^{4}) }/ \sqrt{||g ||_{L^{2}(\mathbb{R}^{4})} ||g ||_{L^{1}(\mathbb{R}^{4}) }}$ among non-negative functions. We will show that $C$ is the maximum of the ratio among non-negative functions.
\end{rem}
}\fi

\subsection{The stream function estimates}

We estimate the stream function $\psi=(-\Delta_{D})^{-1}\omega$ for $\omega\in L^{2}(\mathbb{R}^{2}_{+})$ satisfying $x_2\omega \in L^{1}(\mathbb{R}^{2}_{+})$ and express the kinetic energy by using the Green function.

\begin{prop}
Let $1\leq r\leq 2$. The inequality 
\begin{align}
||x_2^{\alpha}\omega||_{L^{r}(\mathbb{R}^{2}_{+})}\leq ||x_2\omega||_{L^{1}(\mathbb{R}^{2}_{+})}^{\alpha}||\omega||_{L^{2}(\mathbb{R}^{2}_{+})}^{1-\alpha},\quad \alpha=\frac{2}{r}-1,  \label{eq: IE}
\end{align}
holds for $\omega\in L^{2}(\mathbb{R}^{2}_{+})$ such that $x_2\omega\in L^{1}(\mathbb{R}^{2}_{+})$.
\end{prop}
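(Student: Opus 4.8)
The plan is to obtain \eqref{eq: IE} from a single application of Hölder's inequality, after writing the weighted vorticity as a product of the two quantities appearing on the right-hand side. Since $x_2>0$ almost everywhere on $\mathbb{R}^{2}_{+}$, one can factor pointwise $|x_2^{\alpha}\omega|=|x_2\omega|^{\alpha}\,|\omega|^{1-\alpha}$, so that
\[
\|x_2^{\alpha}\omega\|_{L^{r}(\mathbb{R}^{2}_{+})}^{r}=\int_{\mathbb{R}^{2}_{+}}|x_2\omega|^{\alpha r}\,|\omega|^{(1-\alpha)r}\,\dd x .
\]

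Next I would apply Hölder's inequality to this integral with the conjugate exponents $p=1/(\alpha r)$ and $p'=2/\big((1-\alpha)r\big)$. With the choice $\alpha=\tfrac{2}{r}-1$ one has $\alpha r=2-r$ and $(1-\alpha)r=2(r-1)$, hence $\tfrac1p+\tfrac1{p'}=\alpha r+\tfrac{(1-\alpha)r}{2}=(2-r)+(r-1)=1$, so the pair $(p,p')$ is admissible; moreover $1\le r\le 2$ forces $\alpha\in[0,1]$ and $p,p'\in[1,\infty]$. Hölder then gives
\[
\int_{\mathbb{R}^{2}_{+}}|x_2\omega|^{\alpha r}\,|\omega|^{(1-\alpha)r}\,\dd x
\le\Big(\int_{\mathbb{R}^{2}_{+}}|x_2\omega|\,\dd x\Big)^{\alpha r}\Big(\int_{\mathbb{R}^{2}_{+}}|\omega|^{2}\,\dd x\Big)^{(1-\alpha)r/2}
=\|x_2\omega\|_{L^{1}(\mathbb{R}^{2}_{+})}^{\alpha r}\,\|\omega\|_{L^{2}(\mathbb{R}^{2}_{+})}^{(1-\alpha)r},
\]
and taking the $r$-th root yields \eqref{eq: IE}.

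Finally I would handle the endpoint cases, which are trivial: $r=1$ (so $\alpha=1$) and $r=2$ (so $\alpha=0$) are equalities, while for $1<r<2$ the exponents $p,p'$ lie strictly in $(1,\infty)$ and the argument above applies verbatim. There is essentially no obstacle in this proof; the only points worth stating explicitly are that the product decomposition uses $x_2>0$ on the half-plane, and that the bookkeeping of the exponents is precisely arranged by the relation $\alpha=\tfrac2r-1$ so that Hölder closes with the two prescribed norms on the right.
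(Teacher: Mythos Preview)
Your proof is correct and essentially identical to the paper's: both factor $|x_2^{\alpha}\omega|^{r}=(x_2|\omega|)^{2-r}|\omega|^{2r-2}$ and apply H\"older with the conjugate pair $\bigl(\tfrac{1}{2-r},\tfrac{1}{r-1}\bigr)$, then take the $1/r$-th power. Your explicit mention of the endpoint cases $r=1,2$ is a minor addition but changes nothing substantive.
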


\begin{proof}
For $p=1/(2-r)$ and $1/p+1/q=1$, we apply H\"older's inequality to estimate 
\begin{align*}
\int_{\mathbb{R}^{2}_{+}}|x_2^{\alpha}\omega|^{r}dx=\int_{\mathbb{R}^{2}_{+}}x_2^{2-r}|\omega|^{2-r}|\omega|^{2r-2}dx
\leq ||x_2\omega||_{L^{1}}^{\frac{1}{p}}||\omega||_{L^{2}}^{\frac{2}{q}}.
\end{align*}
By taking the $1/r$-th power of both sides, \eqref{eq: IE} follows.
\end{proof}

\begin{prop}
Let $1\leq r\leq 3/2$. The inequality 
\begin{align}
\left\| \frac{\psi}{x_2^{\alpha}} \right\|_{L^{p}(\mathbb{R}^{2}_{+})}\leq C||x_2^{\alpha}\omega||_{L^{r}(\mathbb{R}^{2}_{+})},\quad \frac{1}{p}=\frac{3}{r}-2,\quad \quad \alpha=\frac{2}{r}-1,   \label{eq: SFE}
\end{align}
holds for $\omega\in L^{2}(\mathbb{R}^{2}_{+})$ such that $x_2\omega\in L^{1}(\mathbb{R}^{2}_{+})$ and $\psi=(-\Delta_D)^{-1}\omega$.
\end{prop}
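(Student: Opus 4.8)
The plan is to bound $|\psi|/x_2^{\alpha}$ pointwise by a Riesz potential in $\mathbb{R}^2$ using the Green-function estimate \eqref{eq: Green}, and then to invoke the Hardy--Littlewood--Sobolev inequality for the interior range $1<r<3/2$; the endpoint $r=3/2$, where $p=\infty$, will need a separate H\"older argument that uses the genuine decay of the half-plane Green function rather than the crude bound \eqref{eq: Green}.

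Concretely, for $1<r<3/2$ the exponent $\alpha=\tfrac2r-1$ lies in $(\tfrac13,1)$, so \eqref{eq: Green} applies and gives
\[
\frac{|\psi(x)|}{x_2^{\alpha}}\le\frac1{x_2^{\alpha}}\int_{\mathbb{R}^2_+}G(x,y)|\omega(y)|\,\dd y\lesssim\int_{\mathbb{R}^2_+}\frac{y_2^{\alpha}|\omega(y)|}{|x-y|^{2\alpha}}\,\dd y .
\]
The right-hand side is (a constant multiple of) the Riesz potential of order $s=2-2\alpha\in(0,2)$ in dimension $n=2$ applied to $x_2^{\alpha}|\omega|$ extended by zero, and $x_2^{\alpha}\omega\in L^{r}(\mathbb{R}^2)$ by hypothesis (this also follows from \eqref{eq: IE}). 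The Hardy--Littlewood--Sobolev inequality in $\mathbb{R}^2$ then bounds its $L^{p}$ norm by $C\|x_2^{\alpha}\omega\|_{L^{r}(\mathbb{R}^2_+)}$ with $\tfrac1p=\tfrac1r-\tfrac{s}{2}=\tfrac1r-1+\alpha=\tfrac3r-2$; the admissibility requirements $0<s<2$, $1<r$, $p<\infty$ hold exactly on $1<r<3/2$, which is \eqref{eq: SFE} on that range.

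For the endpoint $r=3/2$ one has $\alpha=\tfrac13$ and $p=\infty$, and the reduction above is too lossy (the Riesz potential of an $L^{3/2}(\mathbb{R}^2)$ function lies only in $\mathrm{BMO}$, not $L^{\infty}$). Instead I would keep $G$ and estimate, for fixed $x$, by H\"older in $y$ with exponents $\tfrac32$ and $3$,
\[
|\psi(x)|\le\int_{\mathbb{R}^2_+}\big(G(x,y)y_2^{-1/3}\big)\big(y_2^{1/3}|\omega(y)|\big)\,\dd y\le\|x_2^{1/3}\omega\|_{L^{3/2}(\mathbb{R}^2_+)}\Big(\int_{\mathbb{R}^2_+}\frac{G(x,y)^3}{y_2}\,\dd y\Big)^{1/3}.
\]
Using the scaling $G(\mu x,\mu y)=G(x,y)$ and the invariance of $G$ under horizontal translation, $\int_{\mathbb{R}^2_+}G(x,y)^3y_2^{-1}\dd y=x_2\int_{\mathbb{R}^2_+}G((0,1),z)^3z_2^{-1}\dd z$, and the last integral is a finite absolute constant: near $z=(0,1)$ the integrand has only a logarithmic singularity, while near $\{z_2=0\}$ and as $|z|\to\infty$ one has $G((0,1),z)\lesssim z_2|z-(0,1)|^{-2}$, so $G((0,1),z)^3z_2^{-1}\lesssim z_2^{2}|z-(0,1)|^{-6}$, which is integrable. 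This yields $\|\psi/x_2^{1/3}\|_{L^{\infty}}\lesssim\|x_2^{1/3}\omega\|_{L^{3/2}}$. The remaining endpoint $r=1$ is borderline — the dipole far field $\psi(x)\sim x_2|x|^{-2}\int y_2\omega\,\dd y$ places $\psi/x_2$ only in weak $L^1$ — so I would double-check whether it is really meant to be included, and if so treat it by interpolation with the interior range or by a weak-type replacement.

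The main obstacle is precisely the endpoint $r=3/2$: the $L^{\infty}$ bound cannot be extracted from \eqref{eq: Green}, because a generic singular kernel of that order maps $L^{3/2}(\mathbb{R}^2)$ only into $\mathrm{BMO}$; one must use the true $|x-y|^{-2}$-type decay of the half-plane Green function at infinity, the quantitative heart being the finiteness of $\int_{\mathbb{R}^2_+}G((0,1),z)^3z_2^{-1}\dd z$. Everything else reduces to a bookkeeping check of the Hardy--Littlewood--Sobolev exponents $\tfrac1p=\tfrac1r-1+\alpha$ and $\alpha=\tfrac2r-1$.
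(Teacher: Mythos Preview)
Your approach on the open range $1<r<3/2$ coincides with the paper's: the paper also uses the pointwise bound $|\psi(x)|/x_2^\alpha \lesssim |x|^{-2\alpha}*(x_2^\alpha|\omega|)$ from \eqref{eq: Green} and then applies the weak-type (``generalized'') Young convolution inequality, which is equivalent to your Hardy--Littlewood--Sobolev step with the same exponent relation $1/p=1/q+1/r-1=3/r-2$.

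At the endpoint $r=3/2$, $p=\infty$, you are in fact more careful than the paper. The paper does not single out this case and simply invokes weak Young, but as you note, the Riesz potential of order $4/3$ maps $L^{3/2}(\mathbb{R}^2)$ only into $\mathrm{BMO}$, not $L^\infty$, so that step does not literally deliver the $L^\infty$ bound. Your direct argument---H\"older in $y$ against $G(x,\cdot)^3 y_2^{-1}$, followed by the scaling identity $\int_{\mathbb{R}^2_+}G(x,y)^3 y_2^{-1}\,dy = x_2\int_{\mathbb{R}^2_+}G((0,1),z)^3 z_2^{-1}\,dz$---is correct; the finiteness check (a logarithmic singularity at $z=(0,1)$ and $z_2^{2}|z|^{-6}$ behaviour elsewhere) goes through exactly as you sketch. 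This is a genuine addition, and it is precisely the $p=\infty$ case the paper later invokes in the concentration--compactness argument.

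Your doubt about $r=1$ is also justified: for any compactly supported nonnegative $\omega$ one has $\psi(x)/x_2\sim \pi^{-1}|x|^{-2}\int y_2\omega\,dy$ at infinity, which is not in $L^1(\mathbb{R}^2_+)$, so \eqref{eq: SFE} fails there as a strong $L^1$ estimate. The paper never actually uses this endpoint (only $r=6/5,\,4/3,\,3/2$ appear), so nothing downstream is affected, but the correct range for the inequality as stated is $1<r\le 3/2$.
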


\begin{proof}
By the Green function estimate \eqref{eq: Green} and zero extention of $\omega$ to $x_2<0$, 
\begin{align*}
\left|\frac{\psi(x)}{x_2^{\alpha}}\right|\lesssim \int_{\mathbb{R}^{2}_{+}}\frac{1}{|x-y|^{2\alpha}}y_2^{\alpha}|\omega(y)|dy=\frac{1}{|x|^{2\alpha}}*x_2^{\alpha}|\omega|.
\end{align*}
For $q=1/\alpha$, $|x|^{-2\alpha}\in L^{q,\infty}(\mathbb{R}^{2})$. For $1/p=1/q+1/r-1=3/2-2$, we apply the generalized Young's convolution inequality \cite[p.32]{ReedSimon2} and obtain 
\begin{align*}
\left\|\frac{\psi }{x_2^{\alpha}}\right\|_{L^{p}}\lesssim \left\|\frac{1}{|x|^{2\alpha}}\right\|_{L^{q,\infty}}||x_2^{\alpha}\omega||_{L^{r}}.
\end{align*}
\end{proof}

\begin{lem}\label{p: IBP}
Set $\psi=(-\Delta_{D})^{-1}\omega$ for $\omega\in L^{2}(\mathbb{R}^{2}_{+})$ such that $x_2\omega\in L^{1}(\mathbb{R}^{2}_{+})$. Then, $\sqrt{x_2}\omega\in L^{\frac{4}{3}}(\mathbb{R}^{2}_{+})$ and $\psi/\sqrt{x_2}\in L^{4}(\mathbb{R}^{2}_{+})$ and 
\begin{align}
\int_{\mathbb{R}^{2}_{+}}|\nabla \psi|^{2}dx=\int_{\mathbb{R}^{2}_{+}}\psi\omega dx=\int_{\mathbb{R}^{2}_{+}}\int_{\mathbb{R}^{2}_{+}}G(x,y)\omega(x)\omega(y)dxdy. \label{eq: D}
\end{align}
\end{lem}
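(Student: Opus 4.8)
The plan is to establish the two integrability claims first, as these render all the integrals in \eqref{eq: D} absolutely convergent, and then to deduce the two identities by a density argument and by Fubini's theorem. For the memberships, apply the interpolation inequality \eqref{eq: IE} with $r=4/3$, so that $\alpha=2/r-1=1/2$, to get
\[
\|\sqrt{x_2}\,\omega\|_{L^{4/3}(\mathbb{R}^2_+)}\le \|x_2\omega\|_{L^1(\mathbb{R}^2_+)}^{1/2}\,\|\omega\|_{L^2(\mathbb{R}^2_+)}^{1/2}<\infty,
\]
and apply the stream function estimate \eqref{eq: SFE} with the same $r=4/3$, so that $1/p=3/r-2=1/4$ (i.e.\ $p=4$) and $\alpha=1/2$, to get $\|\psi/\sqrt{x_2}\|_{L^4(\mathbb{R}^2_+)}\lesssim \|\sqrt{x_2}\,\omega\|_{L^{4/3}(\mathbb{R}^2_+)}<\infty$; this second bound may also be read off from $\nabla\psi\in L^2(\mathbb{R}^2_+)$ (Lemma \ref{l: SEI}) via the isometry \eqref{eq: ISO} and the Sobolev embedding $\dot H^1(\mathbb{R}^4)\hookrightarrow L^4(\mathbb{R}^4)$. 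Since $\tfrac14+\tfrac34=1$, Hölder's inequality then gives $\psi\omega=(\psi/\sqrt{x_2})(\sqrt{x_2}\,\omega)\in L^1(\mathbb{R}^2_+)$, so the middle term of \eqref{eq: D} is finite.

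For the first identity $\int|\nabla\psi|^2\,dx=\int\psi\omega\,dx$, I would argue by density. Since $\psi=(-\Delta_D)^{-1}\omega\in\dot H^1_0(\mathbb{R}^2_+)$ solves $-\Delta\psi=\omega$ with zero Dirichlet data, one has $\int_{\mathbb{R}^2_+}\nabla\psi\cdot\nabla\phi\,dx=\int_{\mathbb{R}^2_+}\omega\phi\,dx$ for every $\phi\in C_c^\infty(\mathbb{R}^2_+)$. The estimate of the previous paragraph shows that $\phi\mapsto\int\omega\phi\,dx=\int(\sqrt{x_2}\,\omega)(\phi/\sqrt{x_2})\,dx$ is a bounded linear functional on $\dot H^1_0(\mathbb{R}^2_+)$, with bound $\lesssim\|\sqrt{x_2}\,\omega\|_{L^{4/3}}\|\nabla\phi\|_{L^2}$; since $C_c^\infty(\mathbb{R}^2_+)$ is dense in $\dot H^1_0(\mathbb{R}^2_+)$ and both sides of the weak formulation are continuous in $\phi$ for the $\dot H^1_0$-norm, the identity extends to all $\phi\in\dot H^1_0(\mathbb{R}^2_+)$, and I take $\phi=\psi$. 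Alternatively, one may truncate and mollify $\omega$ to obtain $\omega_n$ with $\omega_n\to\omega$ in $L^2(\mathbb{R}^2_+)$ and $x_2\omega_n\to x_2\omega$ in $L^1(\mathbb{R}^2_+)$, apply the classical identity to $\omega_n$, and pass to the limit using Lemma \ref{l: SEI}, \eqref{eq: IE}, and \eqref{eq: SFE}.

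For the second identity, substitute $\psi(x)=\int_{\mathbb{R}^2_+}G(x,y)\omega(y)\,dy$ into $\int\psi\omega\,dx$ and apply Fubini's theorem; this is legitimate once the double integral converges absolutely. To verify this I would use the bound \eqref{eq: Green} with $\alpha=1/2$, i.e.\ $0<G(x,y)\lesssim\sqrt{x_2 y_2}/|x-y|$ (valid since $\log(1+t)\lesssim\sqrt{t}$ for $t>0$): with $f:=\sqrt{x_2}\,|\omega|\in L^{4/3}(\mathbb{R}^2)$ extended by zero outside $\mathbb{R}^2_+$,
\[
\iint_{\mathbb{R}^2_+\times\mathbb{R}^2_+}G(x,y)|\omega(x)||\omega(y)|\,dx\,dy\ \lesssim\ \iint_{\mathbb{R}^2\times\mathbb{R}^2}\frac{f(x)f(y)}{|x-y|}\,dx\,dy\ <\ \infty
\]
by the Hardy--Littlewood--Sobolev inequality in $\mathbb{R}^2$ with exponent $\lambda=1$, since $\tfrac34+\tfrac34+\tfrac12=2$ (this also shows $\psi(x)$ is finite for a.e.\ $x$). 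The heart of the whole argument is this exponent bookkeeping: the weight $\sqrt{x_2}$ places $\omega$ exactly in $L^{4/3}$, the Hardy--Littlewood--Sobolev-critical space for the kernel $|x-y|^{-1}$ on the plane, and dually places $\psi$ in $L^4(x_2^{-2}\,dx)$. Consequently the only step requiring an approximation argument is the passage to $\phi=\psi$ in the first identity, which I route through density and the boundedness of $\phi\mapsto\int\omega\phi$ rather than through a direct integration by parts on the unbounded domain.
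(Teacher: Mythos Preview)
Your argument is correct. The integrability claims and the second identity (Fubini justified via the Hardy--Littlewood--Sobolev inequality in $\mathbb{R}^2$) match the paper's reasoning essentially verbatim. The genuine difference lies in the first identity $\int|\nabla\psi|^2=\int\psi\omega$: the paper does not argue by density in $\dot H^1_0(\mathbb{R}^2_+)$ but instead performs a direct integration by parts against a radial cutoff $\theta_R(x)=\theta(|x|/R)$, obtaining
\[
\int_{\mathbb{R}^2_+}\psi\theta_R\omega\,dx=\int_{\mathbb{R}^2_+}|\nabla\psi|^2\theta_R\,dx-\tfrac12\int_{\mathbb{R}^2_+}\psi^2\Delta\theta_R\,dx,
\]
and then shows the error term vanishes as $R\to\infty$ by invoking \eqref{eq: SFE} at a \emph{different} exponent ($r=6/5$, $\alpha=2/3$, $p=2$), which yields $\psi^2/x_2^{4/3}\in L^1(\mathbb{R}^2_+)$ and hence control over $\int_{R\le|x|\le 2R}\psi^2$. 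Your route is cleaner and stays at the single exponent $r=4/3$ throughout, because you replace the explicit tail estimate by the abstract observation that $\phi\mapsto\int\omega\phi$ is $\dot H^1_0$-continuous (via the isometry \eqref{eq: ISO} and $\dot H^1(\mathbb{R}^4)\hookrightarrow L^4(\mathbb{R}^4)$). The paper's cutoff computation, on the other hand, is entirely self-contained and does not appeal to density of $C^\infty_c$ in $\dot H^1_0$ or to the weak formulation of $-\Delta\psi=\omega$; it buys a slightly more explicit rate at the cost of one extra application of \eqref{eq: SFE}.
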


\begin{proof}
By \eqref{eq: IE} and \eqref{eq: SFE}, $\sqrt{x_2}\omega\in L^{\frac{4}{3}}(\mathbb{R}^{2}_{+})$ and $\psi/\sqrt{x_2}\in L^{4}(\mathbb{R}^{2}_{+})$ and the right-hand side of \eqref{eq: D} is finite. We take $\theta\in C^{\infty}_{c}[0,\infty)$ such that $\theta=1$ in $[0,1]$ and $\theta=0$ in $[2,\infty)$ and set the cut-off function $\theta_R(x)=\theta(|x|/R)$ for $R\geq 1$. By integration by parts,
\begin{align*}
\int_{\mathbb{R}^{2}_{+}}\psi \theta_R\omega dx
=\int_{\mathbb{R}^{2}_{+}}|\nabla \psi|^{2} \theta_R dx
-\frac{1}{2}\int_{\mathbb{R}^{2}_{+}}\psi^{2}\Delta\theta_Rdx.
\end{align*}
By \eqref{eq: SFE} for $r=6/5$ with $p=2$ and $\alpha=2/3$, $\psi^{2}/x_2^{4/3}\in L^{1}(\mathbb{R}^{2}_{+})$. Since $\Delta\theta_{R}$ is supported in $R\leq |x|\leq 2R$ and satisfies $|\Delta\theta_{R}|\lesssim 1/R^{2}$, the last term converges to zero as $R\to\infty$ and \eqref{eq: D} follows.
\end{proof}

\subsection{Minimization principle}

We define the functional  
\begin{align*}
I_{\lambda}[\omega]=\frac{1}{2\lambda}\int_{\mathbb{R}^{2}_{+}}\omega^{2}dx-\frac{1}{2}\int_{\mathbb{R}^{2}_{+}}\int_{\mathbb{R}^{2}_{+}}G(x,y)\omega(x)\omega(y)dxdy,\quad \lambda>0,
\end{align*}
and the admissible set   
\begin{align*}
K_{\mu}=\left\{\omega\in L^{2}(\mathbb{R}^{2}_{+}) \ \middle|\ \int_{\mathbb{R}^{2}_{+}}x_2\omega dx=\mu,\ \omega\geq 0\  \right\},\quad \mu>0.
\end{align*}
We consider the minimization 
\begin{align}
\mathcal{I}_{\mu,\lambda}=\inf_{K_{\mu}}I_{\lambda}.  \label{eq: MP}
\end{align}

\begin{thm}\label{t: cthm}
Let $\lambda,\mu>0$. The following holds for the minimization problem \eqref{eq: MP}:
\begin{itemize}
\item[(i)] (Compactness) For any minimizing sequence $\{\omega_n\}\subset K_{\mu_n}$ such that $\mu_n\to \mu$ and $I_{\lambda}[\omega_n]\to \mathcal{I}_{\mu,\lambda}$, there exists a sequence $\{y_n\}\subset \partial \mathbb{R}^{2}_{+}$ such that $\{\omega_n(\cdot +y_n)\}$ and $\{x_2\omega_n(\cdot +y_n)\}$ are relatively compact in $L^{2}(\mathbb{R}^{2}_{+})$ and $L^{1}(\mathbb{R}^{2}_{+})$, respectively. In particular, the problem \eqref{eq: MP} has a minimizer. 
\item[(ii)] (Uniqeness) All minimizers of \eqref{eq: MP} are the Lamb dipole \eqref{eq: Lamb} for $\lambda>0$ and $W=\mu \lambda /c_0^{2}\pi$ up to translation for the $x_1$-variable. Moreover, the minimum is given by the constant 
\begin{align}
\mathcal{I}_{\mu,\lambda}=-\frac{1}{2c_0^{2}\pi}\mu^{2}\lambda.   \label{eq: Min}
\end{align}
\end{itemize}
\end{thm}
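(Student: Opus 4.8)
The plan is to obtain part (ii) by a Lagrange‑multiplier computation combined with a rigidity theorem for the resulting semilinear equation, and to deduce part (i) from the strict subadditivity of $\mu\mapsto\mathcal{I}_{\mu,\lambda}$ via Lions' concentration‑compactness (the concentration‑compactness bookkeeping being deferred to Appendix A). The starting point is a scaling structure: since $I_\lambda$ is homogeneous of degree two while the constraint $\int x_2\omega\,dx=\mu$ is linear, $c\omega\in K_{c\mu}$ and $I_\lambda[c\omega]=c^2 I_\lambda[\omega]$ for $c>0$, so (applying this both ways) $\mathcal{I}_{c\mu,\lambda}=c^2\mathcal{I}_{\mu,\lambda}$. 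By Lemma \ref{l: SEI}, for $\omega\in K_\mu$ one has $I_\lambda[\omega]\ge \frac{1}{2\lambda}\|\omega\|_{L^{2}(\mathbb{R}^{2}_{+})}^2-\frac12\sqrt{3/(8\pi)}\,\mu\,\|\omega\|_{L^{2}(\mathbb{R}^{2}_{+})}$, a quadratic in $\|\omega\|_{L^{2}(\mathbb{R}^{2}_{+})}$ bounded below; hence $\mathcal{I}_{\mu,\lambda}>-\infty$ and every minimizing sequence is bounded in $L^2(\mathbb{R}^{2}_{+})$. Testing with the spatial dilation $\omega_0(\cdot/t)\in K_{t^3\mu}$ of a fixed nonzero $\omega_0\in K_\mu$ and using scale invariance of $G$ gives $t^6\mathcal{I}_{\mu,\lambda}\le\frac{t^2}{2\lambda}\|\omega_0\|_{L^{2}(\mathbb{R}^{2}_{+})}^2-\frac{t^4}{2}\int_{\mathbb{R}^{2}_{+}}|\nabla\psi_0|^2\,dx$, whose right‑hand side is negative for large $t$, so $\mathcal{I}_{\mu,\lambda}<0$. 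Therefore $\mathcal{I}_{\mu,\lambda}=-A\mu^2$ with $A=A(\lambda)>0$, which yields the strict subadditivity $\mathcal{I}_{\mu_1+\mu_2,\lambda}<\mathcal{I}_{\mu_1,\lambda}+\mathcal{I}_{\mu_2,\lambda}$ for $\mu_1,\mu_2>0$.

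For part (i), let $\{\omega_n\}\subset K_{\mu_n}$ be a minimizing sequence with $\mu_n\to\mu$. It is bounded in $L^2(\mathbb{R}^{2}_{+})$, and $\{x_2\omega_n\}$ is bounded in $L^1(\mathbb{R}^{2}_{+})$ with masses $\mu_n\to\mu$. Applying the concentration‑compactness principle to the measures $x_2\omega_n\,dx$ (combined with $\omega_n^2\,dx$, as in Appendix A): vanishing is excluded because $\mathcal{I}_{\mu,\lambda}<0$ while vanishing forces $\int_{\mathbb{R}^{2}_{+}}|\nabla\psi_n|^2\,dx\to0$ and hence $\liminf_n I_\lambda[\omega_n]\ge0$, and dichotomy is excluded by the strict subadditivity just established. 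Thus, after $x_1$‑translations $y_n\in\partial\mathbb{R}^{2}_{+}$, the sequences $x_2\omega_n(\cdot+y_n)$ and (using the $L^2$‑bound) $\omega_n(\cdot+y_n)$ are tight; together with the convergence $I_\lambda[\omega_n]\to\mathcal{I}_{\mu,\lambda}$ one upgrades the weak limits to strong convergence of $\omega_n(\cdot+y_n)$ in $L^2(\mathbb{R}^{2}_{+})$ and of $x_2\omega_n(\cdot+y_n)$ in $L^1(\mathbb{R}^{2}_{+})$, so the limit is a minimizer. The details are carried out in Appendix A.

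For part (ii), let $\omega$ be any minimizer and $\psi=(-\Delta_D)^{-1}\omega$. A first‑variation argument — preserving the impulse constraint (one Lagrange multiplier $W$), respecting $\omega\ge0$ (two‑sided variations on $\{\omega>0\}$, one‑sided on $\{\omega=0\}$), and differentiating the quadratic form via Lemma \ref{p: IBP} — gives $\omega=\lambda(\psi-Wx_2)_+$ a.e.\ in $\mathbb{R}^{2}_{+}$, with $W>0$ (otherwise $\psi-Wx_2\ge\psi\ge0$ could not decay while $\omega$ is localized). Equivalently $-\Delta\psi=\lambda(\psi-Wx_2)_+$ in $\mathbb{R}^{2}_{+}$, $\psi=0$ on $\partial\mathbb{R}^{2}_{+}$, $\psi>0$, and $\psi/x_2\to0$ at infinity (from Lemma \ref{p: IBP} and elliptic estimates). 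Passing to the axisymmetric picture in $\mathbb{R}^4$ via $\varphi(y)=\psi(y_4,|y'|)/|y'|$ as in the proof of Lemma \ref{l: SEI}, one obtains $-\Delta_{\mathbb{R}^4}\varphi=\lambda(\varphi-W)_+$ with $\varphi>0$, $\varphi\to0$ at infinity, so $\{\varphi>W\}$ is bounded. By the moving‑plane method for the non‑decreasing Lipschitz nonlinearity $s\mapsto\lambda(s-W)_+$ — equivalently by the rigidity theorems of Burton \cite{Burton96, Burton05b} — $\varphi$ is, up to a translation along the $y_4$‑axis, radially symmetric about the origin. Radial symmetry reduces the problem to the ODE $-\varphi''-\frac{3}{\rho}\varphi'=\lambda(\varphi-W)_+$; a short analysis (no interior annular region, by ODE uniqueness; inside, the regular radial $\lambda$‑eigenfunction $\rho^{-1}J_1(\sqrt{\lambda}\rho)$; outside, the decaying $\mathbb{R}^4$‑harmonic $d\,\rho^{-2}$; matched in $C^1$ across the free boundary) forces the free boundary at $\rho_0=c_0/\sqrt{\lambda}$ and reproduces exactly the constants \eqref{eq: LambConst}. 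Translating back, $\omega$ is an $x_1$‑translate of the Lamb dipole $\omega_L^{\lambda,W}$. Finally $\int_{\mathbb{R}^{2}_{+}}x_2\omega\,dx=\mu$ and \eqref{eq: C} give $\mu=P=c_0^{2}\pi W/\lambda$, i.e.\ $W=\mu\lambda/(c_0^{2}\pi)$, and then, using Lemma \ref{p: IBP} and \eqref{eq: C},
\[
\mathcal{I}_{\mu,\lambda}=I_\lambda[\omega_L^{\lambda,W}]=\frac{Z}{2\lambda}-E=\frac{c_0^{2}\pi W^{2}}{2\lambda}-\frac{c_0^{2}\pi W^{2}}{\lambda}=-\frac{c_0^{2}\pi W^{2}}{2\lambda}=-\frac{\mu^{2}\lambda}{2c_0^{2}\pi},
\]
which is \eqref{eq: Min}.

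The main obstacle is the rigidity step: proving that $\omega=\lambda(\psi-Wx_2)_+$, under the decay supplied by Lemma \ref{p: IBP}, admits only the Lamb dipole up to $x_1$‑translation. The delicate points are (a) extracting enough a priori regularity and decay of $\psi/x_2$ to run the moving‑plane method in $\mathbb{R}^4$ for the merely Lipschitz nonlinearity (alternatively, verifying the hypotheses of Burton's rigidity theorems), and (b) in part (i), converting tightness of $x_2\omega_n$ and $L^2$‑weak convergence into the two strong convergences; these form the technical core, handled in Appendix A and through the cited rigidity results, whereas the scaling identity, the Euler–Lagrange equation, and the evaluation of $\mathcal{I}_{\mu,\lambda}$ are comparatively routine.
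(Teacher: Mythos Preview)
Your proposal is correct and follows essentially the same route as the paper: quadratic scaling in $\mu$ plus negativity gives strict subadditivity, which feeds Lions' concentration--compactness for (i); for (ii) the Euler--Lagrange relation $\omega=\lambda(\psi-Wx_2)_+$, the lift to $\mathbb{R}^4$ via \eqref{eq: ISO}, moving planes, and the Bessel ODE identify the Lamb dipole. Two small remarks: the paper invokes Fraenkel's symmetry theorem in $\mathbb{R}^4$ rather than Burton's 2D rigidity (your ``equivalently'' is a little loose, since Burton's results are phrased differently), and the paper pins down $W$ by multiplying \eqref{eq: EL} by $\omega$ to get $W=-2\mu\mathcal{I}_1$, whereas you compute $\mathcal{I}_{\mu,\lambda}$ directly from \eqref{eq: C}; both are fine.
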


We show Theorem \ref{t: cthm} (ii) and deduce Theorem \ref{t: SI}. The proof of Theorem \ref{t: cthm} (i) is simpler than that of the compactness argument of \cite{AC22} thanks to the quadratic form of the minimum \eqref{eq: quad} and is given in Appendix A with a minor modification without using the $L^{1}$ estimate.

\subsection{Properties of the minimum}

The Lamb dipole \eqref{eq: Lamb} satisfies the scaling law
\begin{align}
\omega_{L}^{\lambda,W}(x)=W\sqrt{\lambda}\omega_{L}^{1,1}(\sqrt{\lambda}x).  \label{eq: LS}
\end{align}
By scaling $\omega(x)=\lambda \tilde{\omega}(\sqrt{\lambda}x)$,
\begin{align*}
I_{\lambda}[\omega]&=I_{1}[\tilde{\omega}], \\
||x_2 \omega||_{L^{1}(\mathbb{R}^{2}_{+})}&=\frac{1}{\sqrt{\lambda}}||x_2 \tilde{\omega}||_{L^{1}(\mathbb{R}^{2}_{+})}.
\end{align*}
Thus, the minimum $\mathcal{I}_{\mu,\lambda}$ satisfies 
\begin{align}
\mathcal{I}_{\mu,\lambda}=\mathcal{I}_{\mu\sqrt{\lambda},1}.    \label{eq: Scaling}
\end{align}
In the sequel, we consider the case $\lambda=1$ and $\mathcal{I}_{\mu}=\mathcal{I}_{\mu,1}$. We denote the constant in the inequality \eqref{eq: SEI} by $C_{*}=\sqrt[4]{\frac{3}{8\pi}}$.

\begin{lem}
\begin{align}
-\frac{C_{*}^{4}}{8}&\leq \mathcal{I}_{1} <0,  \label{eq: negative} \\
\mathcal{I}_{\mu}&=\mu^{2}\mathcal{I}_{1},\quad \mu\geq 0.  \label{eq: quad} 
\end{align}
In particular, 
\begin{align}
\mathcal{I}_{\mu}<\mathcal{I}_{\mu-\alpha}+\mathcal{I}_{\alpha},\quad 0<\alpha<\mu.  \label{eq: SSA}
\end{align}
\end{lem}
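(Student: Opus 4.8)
The plan is to handle the three assertions in order, with \eqref{eq: quad} carrying the main content and \eqref{eq: SSA} then reducing to arithmetic. Throughout I would fix $\lambda=1$ and abbreviate $I=I_1$, $\mathcal{I}_\mu=\mathcal{I}_{\mu,1}$; I would also note at the outset that, by Lemma \ref{p: IBP}, the functional $I$ may be written interchangeably with the Dirichlet form $\tfrac12\|\nabla\psi\|_{L^2(\mathbb{R}^2_+)}^2$ in place of $\tfrac12\iint G\omega\omega$, so that the energy inequality from Lemma \ref{l: SEI} applies.

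For the lower bound in \eqref{eq: negative}: if $\omega\in K_1$ then $\omega\ge 0$ and $x_2>0$ a.e. on $\mathbb{R}^2_+$ give $\|x_2\omega\|_{L^1}=\int_{\mathbb{R}^2_+} x_2\omega\,dx=1$, so \eqref{eq: SEI} yields $\|\nabla\psi\|_{L^2}^2\le C_*^2\|\omega\|_{L^2}$, whence
\[ I[\omega]\ \ge\ \tfrac12\|\omega\|_{L^2}^2-\tfrac{C_*^2}{2}\|\omega\|_{L^2}. \]
Minimising $t\mapsto \tfrac12 t^2-\tfrac{C_*^2}{2}t$ over $t\ge 0$ gives the value $-C_*^4/8$ (attained at $t=C_*^2/2$), hence $\mathcal{I}_1\ge -C_*^4/8$. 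For the strict inequality $\mathcal{I}_1<0$ I would exhibit a single admissible competitor of negative energy: the quickest is to evaluate $I$ on the Lamb dipole $\omega_L=\omega_L^{1,W}$ with $W=1/(c_0^2\pi)$, which lies in $K_1$ by \eqref{eq: C} and satisfies $I[\omega_L]=\tfrac12 Z-E=-\tfrac{1}{2c_0^2\pi}<0$. (If one prefers not to invoke \eqref{eq: C}, the same conclusion follows by a scaling argument: for any nonzero $\omega_0\in K_1$ the family $\omega_\sigma(x)=\sigma^3\omega_0(\sigma x)$ again lies in $K_1$, with $\psi_{\omega_\sigma}(x)=\sigma\psi_{\omega_0}(\sigma x)$, so $I[\omega_\sigma]=\tfrac{\sigma^2}{2}\big(\sigma^2\|\omega_0\|_{L^2}^2-\|\nabla\psi_{\omega_0}\|_{L^2}^2\big)<0$ for $\sigma$ small, since $\psi_{\omega_0}\not\equiv 0$.)

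For \eqref{eq: quad}, the case $\mu=0$ is degenerate — $\omega\ge 0$ and $\int x_2\omega=0$ force $\omega\equiv 0$, so $\mathcal{I}_0=0$ — and for $\mu>0$ I would use that $\omega\mapsto\omega/\mu$ is a bijection of $K_\mu$ onto $K_1$. Since $(-\Delta_D)^{-1}$ is linear this scaling divides $\psi$ by $\mu$ as well, so both quadratic terms of $I$ acquire a factor $\mu^{-2}$, giving $I[\omega]=\mu^2 I[\omega/\mu]$ for every $\omega\in K_\mu$; taking the infimum over $K_\mu$ and transporting it along the bijection gives $\mathcal{I}_\mu=\mu^2\mathcal{I}_1$. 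Finally, for \eqref{eq: SSA}, \eqref{eq: quad} gives $\mathcal{I}_{\mu-\alpha}+\mathcal{I}_\alpha=\big((\mu-\alpha)^2+\alpha^2\big)\mathcal{I}_1$, and since $(\mu-\alpha)^2+\alpha^2=\mu^2-2\alpha(\mu-\alpha)<\mu^2$ for $0<\alpha<\mu$ while $\mathcal{I}_1<0$, multiplying by $\mathcal{I}_1$ reverses the inequality and gives $\mathcal{I}_{\mu-\alpha}+\mathcal{I}_\alpha>\mu^2\mathcal{I}_1=\mathcal{I}_\mu$.

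I do not anticipate a real obstacle: the only points needing a word of justification are the identification of the energy term with $\|\nabla\psi\|_{L^2}^2$ (supplied by Lemma \ref{p: IBP}, so that \eqref{eq: SEI} legitimately applies), and — if one uses the scaling route for $\mathcal{I}_1<0$ — the verification that $(-\Delta_D)^{-1}$ commutes with the dilation $x\mapsto\sigma x$ on $\mathbb{R}^2_+$ in the stated way, which is immediate from $-\Delta_x[\sigma\psi_{\omega_0}(\sigma x)]=\sigma^3\omega_0(\sigma x)$ together with preservation of the Dirichlet boundary condition.
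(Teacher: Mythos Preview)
Your proposal is correct and follows essentially the same approach as the paper. The only cosmetic differences are that you obtain the lower bound by minimising the quadratic $t\mapsto \tfrac12 t^2-\tfrac{C_*^2}{2}t$ in $t=\|\omega\|_{L^2}$ whereas the paper uses Young's inequality (these are the same computation), and for strict negativity the paper goes directly to the scaling argument $\omega_\sigma(x)=\sigma^3\omega_1(\sigma x)$ which you give as your alternative, rather than evaluating on the Lamb dipole.
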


\begin{proof}
We apply the energy inequality \eqref{eq: SEI} for $\omega\in L^{2}(\mathbb{R}^{2}_{+})$ satisfying $x_2\omega\in L^{1}(\mathbb{R}^{2}_{+})$ and $\psi=(-\Delta_D)^{-1}\omega$ and Young's inequality to estimate 
\begin{align*}
\frac{1}{2}||\nabla \psi||_{L^{2}}^{2}\leq \frac{C_{*}^{2}}{2} ||x_2\omega||_{L^{1}}||\omega||_{L^{2}}
\leq \frac{C_{*}^{4}}{8}||x_2\omega||_{L^{1}}^{2}+\frac{1}{2}||\omega||_{L^{2}}^{2}. 
\end{align*}
Thus, the lower bound in \eqref{eq: negative} holds. We take $\omega_1$ such that $||x_2\omega_1||_{L^{1}}=1$. By scaling $\omega_{\sigma}(x)=\sigma^{3}\omega_1(\sigma x)$ for $\sigma>0$, 
\begin{align*}
||x_2\omega_{\sigma}||_{L^{1}}&=||x_2\omega_{1}||_{L^{1}},\\
||\omega_{\sigma}||_{L^{2}}&=\sigma^{2}||\omega_{1}||_{L^{2}}, \\
||\nabla \psi_{\sigma}||_{L^{2}}&=\sigma ||\nabla \psi_{1}||_{L^{2}},\quad \psi_1=(-\Delta_{D})^{-1}\omega_{\sigma}.
\end{align*}
Taking small $\sigma>0$ implies that 
\begin{align*}
\mathcal{I}_{1}\leq \frac{1}{2}||\omega_{\sigma}||_{L^{2}}^{2}-\frac{1}{2}||\nabla \psi_{\sigma}||_{L^{2}}^{2}
=\frac{\sigma^{4}}{2}||\omega_{1}||_{L^{2}}^{2}-\frac{\sigma^{2}}{2}||\nabla \psi_{1}||_{L^{2}}^{2} 
=\frac{\sigma^{2}}{2} \left(\sigma^{2}||\omega_{1}||_{L^{2}}^{2}-||\nabla \psi_{1}||_{L^{2}}^{2} \right)< 0,
\end{align*}
and negativity in \eqref{eq: negative}. Since $K_0=\{0\}$, $\mathcal{I}_{0}=0$. For $\mu>0$, 
\begin{align*}
\mathcal{I}_{\mu}
=\inf\left\{I_1[\omega]\ |\ ||x_2\omega||_{L^{1}}=\mu,\ \omega\geq 0\ \right\} 
=\inf\left\{I_1[\mu \tilde{\omega}]\ |\ ||x_2\tilde{\omega}||_{L^{1}}=1,\ \tilde{\omega}\geq 0\ \right\}
=\mu^{2}\mathcal{I}_{1}.
\end{align*}
Thus, the identity \eqref{eq: quad} holds.
\end{proof}

For the boundedness of minimizing sequences to \eqref{eq: MP}, we prepare an inequality.

\begin{prop}
\begin{align}
||\omega||_{L^{2}}^{2}-4 I_1[\omega]\leq C_{*}^{4}||x_2\omega||_{L^{1}}^{2} \label{eq: VB}
\end{align}
for $\omega\in L^{2}(\mathbb{R}^{2}_{+})$ such that $x_2\omega\in L^{1}(\mathbb{R}^{2}_{+})$.
\end{prop}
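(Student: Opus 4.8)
The inequality \eqref{eq: VB} is an algebraic rearrangement of the energy inequality \eqref{eq: SEI} combined with Young's inequality, so the plan is to unwind the definition of $I_1$, isolate the Dirichlet energy term, and apply \eqref{eq: SEI}. No new analytic input beyond Lemma \ref{l: SEI} is needed; in particular the finiteness of all quantities appearing (so that the manipulations make sense) is guaranteed by Lemma \ref{p: IBP}, which also justifies writing $I_1[\omega]$ via the Green-function double integral.

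\textbf{Key steps.} First I would recall that, by \eqref{eq: D},
\begin{align*}
I_1[\omega]=\frac{1}{2}\|\omega\|_{L^{2}(\mathbb{R}^{2}_{+})}^{2}-\frac{1}{2}\|\nabla \psi\|_{L^{2}(\mathbb{R}^{2}_{+})}^{2},\qquad \psi=(-\Delta_D)^{-1}\omega,
\end{align*}
so that
\begin{align*}
\|\omega\|_{L^{2}(\mathbb{R}^{2}_{+})}^{2}-4I_1[\omega]=2\|\nabla \psi\|_{L^{2}(\mathbb{R}^{2}_{+})}^{2}-\|\omega\|_{L^{2}(\mathbb{R}^{2}_{+})}^{2}.
\end{align*}
Second, I would invoke the energy inequality \eqref{eq: SEI}, namely $\|\nabla \psi\|_{L^{2}}^{2}\leq C_{*}^{2}\|x_2\omega\|_{L^{1}}\|\omega\|_{L^{2}}$, and then apply Young's inequality $2ab\leq a^{2}+b^{2}$ with $a=C_{*}^{2}\|x_2\omega\|_{L^{1}}$ and $b=\|\omega\|_{L^{2}}$ to obtain
\begin{align*}
2\|\nabla \psi\|_{L^{2}}^{2}\leq 2C_{*}^{2}\|x_2\omega\|_{L^{1}}\|\omega\|_{L^{2}}\leq C_{*}^{4}\|x_2\omega\|_{L^{1}}^{2}+\|\omega\|_{L^{2}}^{2}.
\end{align*}
Subtracting $\|\omega\|_{L^{2}}^{2}$ from both sides and combining with the first display yields \eqref{eq: VB}.

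\textbf{Main obstacle.} There is essentially no obstacle: the content is entirely contained in Lemma \ref{l: SEI}. The only point requiring a word of care is the legitimacy of the identity $I_1[\omega]=\frac12\|\omega\|_{L^2}^2-\frac12\|\nabla\psi\|_{L^2}^2$ for every admissible $\omega$ (i.e. that the double integral defining $I_1$ indeed equals the Dirichlet integral), but this is exactly the statement of Lemma \ref{p: IBP}, so it may be quoted directly.
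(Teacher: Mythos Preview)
Your proposal is correct and follows exactly the same route as the paper: rewrite $\|\omega\|_{L^2}^2-4I_1[\omega]$ in terms of $\|\nabla\psi\|_{L^2}^2$, apply the energy inequality \eqref{eq: SEI}, and finish with Young's inequality $2ab\le a^2+b^2$. The only difference is cosmetic (the paper starts from $\|\omega\|_{L^2}^2-2I_1[\omega]=\|\nabla\psi\|_{L^2}^2$ and subtracts $\tfrac12\|\omega\|_{L^2}^2$ at the end, whereas you compute $\|\omega\|_{L^2}^2-4I_1[\omega]$ directly), and your explicit appeal to Lemma~\ref{p: IBP} to justify the Dirichlet-energy identity is a welcome point of rigor.
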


\begin{proof}
By \eqref{eq: SEI} and Young's inequality,
\begin{align*}
||\omega||_{L^{2}}^{2}-2I_1[\omega]=||\nabla \psi||_{L^{2}}^{2}\leq C_{*}^{2} ||x_2\omega||_{L^{1}} ||\omega||_{L^{2}}
\leq \frac{1}{2}C_{*}^{4}||x_2\omega||_{L^{1}}^{2}+\frac{1}{2}||\omega||_{L^{2}}^{2}.
\end{align*}
By subtracting $||\omega||_{L^{2}}^{2}/2$ from both side, we obtain \eqref{eq: VB}.
\end{proof}

\subsection{The Euler--Lagrange equation}

We show the uniqueness of minimizers to \eqref{eq: MP} by the uniqueness of solutions to the Euler--Lagrange equation. We set a Banach space $K=\{\omega\in L^{2}(\mathbb{R}^{2}_{+})\ |\ x_2\omega\in L^{1}(\mathbb{R}^{2}_{+})\ \}$ normed with $||\omega||_{K}=\max\{ ||\omega||_{L^{2}},\ ||x_2\omega||_{L^{1}}\}$.

\begin{prop}
The functional $I_1\in C^{1}(K;\mathbb{R})$ satisfies 
\begin{align}
<I_1'[\omega],\eta>=<\omega-\psi,\eta>,\quad \eta\in K,   \label{eq: DF}
\end{align}
for $\omega\in K$ and $\psi=(-\Delta_{D})^{-1}\omega$.
\end{prop}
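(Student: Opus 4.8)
The plan is to split $I_1$ into its local quadratic part and its nonlocal part and treat them separately. Write $I_1=Q-N$ with $Q[\omega]=\tfrac12\|\omega\|_{L^2(\mathbb{R}^2_+)}^2$ and $N[\omega]=\tfrac12\iint_{\mathbb{R}^2_+\times\mathbb{R}^2_+}G(x,y)\omega(x)\omega(y)\,dx\,dy$. Since $\|\omega\|_{L^2}\le\|\omega\|_K$, the inclusion $K\hookrightarrow L^2(\mathbb{R}^2_+)$ is a bounded linear map, so $Q$ is the composition of a bounded quadratic form on $L^2(\mathbb{R}^2_+)$ with this inclusion; hence $Q\in C^1(K;\mathbb{R})$ (indeed $C^\infty$) with $\langle Q'[\omega],\eta\rangle=\langle\omega,\eta\rangle$. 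All the content is therefore in showing that $N$ is a bounded quadratic form on the Banach space $K$.

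To that end I would introduce the symmetric bilinear form
\[
B[\omega,\eta]=\iint_{\mathbb{R}^2_+\times\mathbb{R}^2_+}G(x,y)\omega(x)\eta(y)\,dx\,dy=\int_{\mathbb{R}^2_+}\psi_\omega\,\eta\,dx,\qquad \psi_\omega=(-\Delta_D)^{-1}\omega,
\]
where the symmetry $B[\omega,\eta]=B[\eta,\omega]$ and the second identity (via Fubini) both use $G(x,y)=G(y,x)$. The key point---and the main obstacle---is absolute convergence together with the bound $|B[\omega,\eta]|\le C\|\omega\|_K\|\eta\|_K$. This follows by writing $\psi_\omega\eta=(\psi_\omega/\sqrt{x_2})(\sqrt{x_2}\,\eta)$, applying H\"older's inequality with exponents $4$ and $4/3$, and then combining \eqref{eq: SFE} with $r=4/3$ (so $\alpha=1/2$, $p=4$) with \eqref{eq: IE} with $r=4/3$ (so $\alpha=1/2$) applied to each factor:
\[
|B[\omega,\eta]|\le\Bigl\|\tfrac{\psi_\omega}{\sqrt{x_2}}\Bigr\|_{L^4(\mathbb{R}^2_+)}\bigl\|\sqrt{x_2}\,\eta\bigr\|_{L^{4/3}(\mathbb{R}^2_+)}\lesssim \|x_2\omega\|_{L^1}^{1/2}\|\omega\|_{L^2}^{1/2}\,\|x_2\eta\|_{L^1}^{1/2}\|\eta\|_{L^2}^{1/2}\le C\|\omega\|_K\|\eta\|_K.
\]
The finiteness of all quantities here---$\sqrt{x_2}\,\omega\in L^{4/3}$, $\psi_\omega/\sqrt{x_2}\in L^4$, hence $N[\omega]=\tfrac12 B[\omega,\omega]=\tfrac12\int|\nabla\psi_\omega|^2<\infty$ by \eqref{eq: D}---is precisely the content of Lemma~\ref{p: IBP}; the same estimate applied to $|\omega|,|\eta|\in K$ justifies the use of Fubini. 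Thus $N$ is a bounded quadratic form on $K$.

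Once $B$ is a bounded symmetric bilinear form on $K$, the differentiability is standard. For $\omega,\eta\in K$ one has the exact identity
\[
N[\omega+\eta]-N[\omega]-B[\omega,\eta]=\tfrac12 B[\eta,\eta],
\]
and $|B[\eta,\eta]|\le C\|\eta\|_K^2=o(\|\eta\|_K)$ as $\|\eta\|_K\to0$, so $N$ is Fr\'echet differentiable at every $\omega\in K$ with $\langle N'[\omega],\eta\rangle=B[\omega,\eta]=\langle\psi_\omega,\eta\rangle$; moreover $\omega\mapsto B[\omega,\cdot\,]$ is linear and bounded from $K$ to $K^{*}$, hence continuous, so $N\in C^1(K;\mathbb{R})$. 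Combining with the formula for $Q$ gives $I_1=Q-N\in C^1(K;\mathbb{R})$ and
\[
\langle I_1'[\omega],\eta\rangle=\langle\omega,\eta\rangle-\langle\psi_\omega,\eta\rangle=\langle\omega-\psi,\eta\rangle,\qquad \eta\in K,
\]
where the right-hand pairing is understood as $\int_{\mathbb{R}^2_+}(\omega-\psi)\eta\,dx$, a well-defined bounded linear functional on $K$ by the estimate above (note $\psi$ itself need not lie in $L^2(\mathbb{R}^2_+)$). In summary, the only genuine work is the bilinear bound $|B[\omega,\eta]|\lesssim\|\omega\|_K\|\eta\|_K$, a bilinear refinement of the stream-function estimates already established; the rest is the general theory of $C^1$ quadratic functionals.
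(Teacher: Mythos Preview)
Your proof is correct and follows essentially the same approach as the paper. Both arguments hinge on the same key bilinear bound $|\langle\psi_\omega,\eta\rangle|\lesssim\|\omega\|_K\|\eta\|_K$, obtained by pairing $\psi_\omega/\sqrt{x_2}\in L^4$ with $\sqrt{x_2}\,\eta\in L^{4/3}$ via \eqref{eq: SFE} and \eqref{eq: IE}; the only cosmetic difference is that the paper computes the G\^ateaux derivative first and then checks continuity of $\omega\mapsto I_1'[\omega]$, whereas you invoke the general fact that a bounded symmetric bilinear form gives a $C^1$ quadratic functional directly.
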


\begin{proof}
We set $\phi=(-\Delta_{D})^{-1}\eta$ for $\eta\in K$. Then, 
\begin{align*}
I_1[\omega+\varepsilon \eta]
&=\frac{1}{2}\int_{\mathbb{R}^{2}_{+}}|\omega+\varepsilon \eta|^{2}dx 
-\frac{1}{2}\int_{\mathbb{R}^{2}_{+}}|\nabla \psi+\varepsilon \nabla \phi|^{2}dx \\
&=I_1[\omega]+\varepsilon (<\omega,\eta>-<\nabla \psi,\nabla\phi>)+\frac{\varepsilon^{2}}{2}\left(\int_{\mathbb{R}^{2}_{+}}|\eta|^{2}dx-\int_{\mathbb{R}^{2}_{+}}|\nabla\phi|^{2}dx \right).
\end{align*}
By integration by parts, 
\begin{align*}
<D_{G}I[\omega],\eta>=\lim_{\varepsilon\to 0}\frac{I_1[\omega+\varepsilon \eta]-I_1[\omega]}{\varepsilon}
=<\omega-\psi,\eta>.
\end{align*}
Since $\psi/\sqrt{x_2}\in L^{4}(\mathbb{R}^{2}_{+})$ for $\omega\in K$ and $|<\psi,\eta>|\lesssim ||\omega||_{K}||\eta||_{K}$, $D_{G}I[\cdot]\in C(K; K^{*})$ and the Fr\'echet derivative $I'[\omega]=D_{G}I[\omega]$ exists and $I\in C^{1}(K; \mathbb{R})$ satisfies \eqref{eq: DF}. 
\end{proof}

We differentiate the functional $I_1\in C^{1}(K; \mathbb{R})$ at a minimizer $\omega\in K_{\mu}\subset K$.

\begin{prop}\label{p: omega}
Let $\mu>0$. Let $\omega\in K_{\mu}$ be a minimizer of $\mathcal{I}_{\mu}$. Then, there exists $\delta_{0}>0$ such that $|\{x\in \mathbb{R}^{2}_{+}\ |\ \omega> \delta_0\ \}|>0$. Let $h_{*}\in L^{\infty}(\mathbb{R}^{2}_{+})$ be a compactly supported function such that $\textrm{spt}\ h_{*}\subset \{ \omega> \delta_0 \}$ and $\int_{\mathbb{R}^{2}_{+}}xh_{*}dx=1$. Then,
\begin{align}
<I'_1[\omega],\eta>\ \geq 0,\quad \eta=h-\left(\int_{\mathbb{R}^{2}_{+}}x_2h dx \right)h_*,    \label{eq: DF2}
\end{align}
for arbitrary $\delta\in (0,\delta_0)$ and compactly supported functions $h\in L^{\infty}(\mathbb{R}^{2}_{+})$ such that $h\geq 0$ on $\{0\leq \omega\leq \delta\}$.
\end{prop}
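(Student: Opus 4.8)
The plan is to show that the first variation of $I_1$ at a minimizer $\omega$ is nonnegative along admissible directions that keep $\omega$ inside (the interior of) $K_\mu$ to first order. The statement \eqref{eq: DF2} is precisely a one-sided Euler--Lagrange inequality, and the test function $\eta = h - \left(\int_{\mathbb{R}^2_+} x_2 h\, dx\right) h_*$ is designed so that $\int_{\mathbb{R}^2_+} x_2 \eta\, dx = 0$, i.e. $\eta$ is tangent to the impulse constraint; the correction term $-(\int x_2 h)h_*$ supported in $\{\omega > \delta_0\}$ restores the impulse while, for $t>0$ small, staying compatible with the sign constraint there since $\omega$ is bounded below on that set.

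First I would establish the existence of $\delta_0$: since $\omega \in K_\mu$ with $\mu > 0$ we have $\omega \not\equiv 0$, so $|\{\omega > 0\}| > 0$, and by monotone convergence $|\{\omega > \delta_0\}| > 0$ for some $\delta_0 > 0$; this also lets one pick $h_*$ with the stated properties (e.g. a normalized multiple of the indicator of a positive-measure compact subset of $\{\omega > \delta_0\} \cap \{x_1 \text{ bounded away from axis}\}$, rescaled so $\int x_2 h_* = 1$). Next, given $h$ and $\delta \in (0, \delta_0)$, set $\omega_t = \omega + t\eta$ for $t > 0$ small. I would verify that $\omega_t \in K_{\mu}$ for small $t$: the impulse is preserved exactly because $\int x_2 \eta\, dx = 0$; for non-negativity, on $\{\omega > \delta\} \supset \{\omega > \delta_0\}$ the perturbation $t\eta$ is an $L^\infty$ function times $t$, so $\omega_t \geq \delta - t\|\eta\|_{L^\infty} \geq 0$ for $t$ small (on $\{\omega > \delta_0\}$ one uses $\omega \geq \delta_0$ together with boundedness of $\eta$), while on $\{0 \le \omega \le \delta\}$ the correction term $h_*$ vanishes (its support lies in $\{\omega > \delta_0\}$) and $\eta = h \geq 0$ there by hypothesis, so $\omega_t = \omega + th \geq 0$; and clearly $\omega_t \in L^2$ with $x_2\omega_t \in L^1$ since $\eta \in K$ (it is a compactly supported bounded function). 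Thus $\omega_t$ is an admissible competitor, so $I_1[\omega_t] \geq I_1[\omega] = \mathcal{I}_\mu$.

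Then, since $I_1 \in C^1(K;\mathbb{R})$ by the preceding proposition and $\eta \in K$, we have
\begin{align*}
0 \leq \lim_{t \to 0^+} \frac{I_1[\omega + t\eta] - I_1[\omega]}{t} = \langle I_1'[\omega], \eta\rangle,
\end{align*}
which is exactly \eqref{eq: DF2}. The one-sidedness ($t > 0$ only) is essential and is why we get an inequality rather than an equation: the sign constraint on $\{0 \le \omega \le \delta\}$ cannot in general be maintained for $t < 0$.

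The main obstacle is the careful verification that $\omega_t$ remains non-negative uniformly for small $t$ despite the set $\{0 \le \omega \le \delta\}$ and the support of $h$ potentially being unbounded in measure — but here $h$ is assumed compactly supported and bounded, so $\eta$ is a fixed $L^\infty$ function with compact support, and the smallness of $t$ needed is controlled by the single quantity $\delta / \|\eta\|_{L^\infty}$ on $\{\omega > \delta\}$ (and nothing is needed on the complement since $\eta \ge 0$ there). A secondary point to be careful about is that the definition of $h_*$ requires $\{\omega > \delta_0\}$ to contain a compact set of positive measure on which we can normalize $\int x_2 h_* = 1$; since $x_2 > 0$ on $\mathbb{R}^2_+$ and the set has positive measure, such $h_*$ exists. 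With these in hand the argument is routine.
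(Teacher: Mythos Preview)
Your proposal is correct and follows essentially the same approach as the paper's proof: verify that $\omega+t\eta\in K_\mu$ for small $t>0$ by checking non-negativity separately on $\{0\le\omega\le\delta\}$ (where $\eta=h\ge 0$) and on $\{\omega>\delta\}$ (where $\omega>\delta$ beats $t\|\eta\|_{L^\infty}$), then differentiate $I_1[\omega+t\eta]\ge I_1[\omega]$ at $t=0^+$. The only cosmetic difference is that the paper invokes $\mathcal{I}_\mu<0$ to see $\omega\not\equiv 0$, whereas you use $\int x_2\omega=\mu>0$ directly; both are fine.
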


\begin{proof}
The minimizer $\omega$ is non-trivial because $0>\mathcal{I}_{\mu}=I_1[\omega]$  
by \eqref{eq: negative}. We take $\delta_0>0$ such that $|\{\omega> \delta_0\}|>0$. We take arbitrary $\delta\in (0,\delta_0)$ and $h$ and set $\eta$ by \eqref{eq: DF2}. Then, $\int_{\mathbb{R}^{2}_{+}}x_2 \eta dx=0$ and $\eta=h$ on $\{\omega\leq \delta_0\}$ by $\textrm{spt}\ h_*\subset \{\omega > \delta_0\}$.

We show that $\omega+\varepsilon \eta\in K_{\mu}$ for sufficiently small $\varepsilon>0$. It suffices to show that $\omega+\varepsilon \eta$ is non-negative in $\mathbb{R}^{2}_{+}$ since $\eta$ is compactly supported. On $\{0\leq \omega\leq \delta\}$, $\eta=h\geq 0$ and $\omega+\varepsilon \eta=\omega+\varepsilon h\geq 0$. On $\{\omega> \delta\}$, $\omega+\varepsilon \eta>\omega-\varepsilon ||\eta||_{L^{\infty}}>0$ for sufficiently small $\varepsilon>0$. Thus $\omega+\varepsilon \eta\in K_{\mu}$.

The inequality \eqref{eq: DF2} follows from $\frac{d}{d\varepsilon}I_1[\omega+\varepsilon \eta]|_{\varepsilon=0} \geq 0$.
\end{proof}

\begin{figure}[h]
 \centering
 \includegraphics[width=0.75\linewidth]{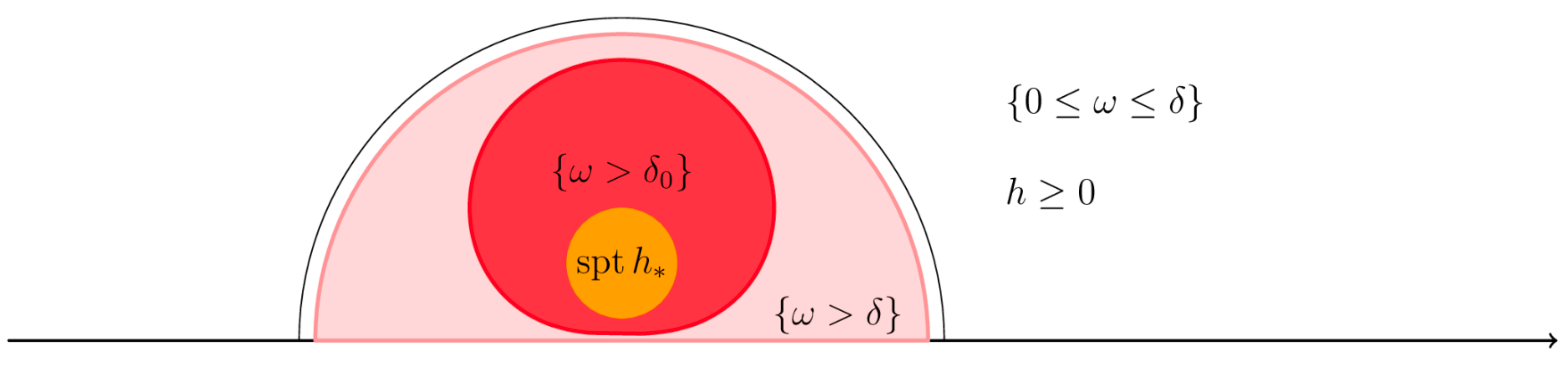}
    \caption{The sets $\{0\leq \omega\leq \delta\}$ and $\{\omega> \delta\}$ in Proposition \ref{p: omega}}
  \label{f: LM}
\end{figure}

\begin{lem}\label{l: EL}
Let $\mu>0$. Let $\omega\in K_{\mu}$ be a minimizer of $\mathcal{I}_{\mu}$. Then, 
\begin{align}
\omega=(\psi-Wx_2)_{+},   \label{eq: EL}
\end{align}
for $W=-2\mu \mathcal{I}_{1}>0$ and $\psi=(-\Delta_{D})^{-1}\omega$.
\end{lem}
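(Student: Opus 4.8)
The plan is to derive the Euler--Lagrange equation \eqref{eq: EL} from the variational inequality \eqref{eq: DF2} of Proposition \ref{p: omega}, which, unwound using \eqref{eq: DF}, reads
\begin{align}
\int_{\mathbb{R}^{2}_{+}}(\omega-\psi)\eta\,\dd x\geq 0,\quad \eta=h-\Big(\int_{\mathbb{R}^{2}_{+}}x_2h\,\dd x\Big)h_*,  \label{eq: plan-var}
\end{align}
for all admissible test functions $h$. Plugging in the definition of $\eta$ and setting $W:=\int_{\mathbb{R}^{2}_{+}}(\omega-\psi)h_*\,\dd x$ (a finite constant by the integrability $\psi/\sqrt{x_2}\in L^4$, $\sqrt{x_2}\omega\in L^{4/3}$ established in Lemma \ref{p: IBP}), this becomes
\begin{align}
\int_{\mathbb{R}^{2}_{+}}\big(\omega-\psi+Wx_2\big)h\,\dd x\geq 0  \label{eq: plan-var2}
\end{align}
for every compactly supported $h\in L^\infty(\mathbb{R}^{2}_{+})$ with $h\geq 0$ on $\{0\leq\omega\leq\delta\}$, for each $\delta\in(0,\delta_0)$.

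The next step is the standard sign-analysis of \eqref{eq: plan-var2}. On the set $\{\omega>\delta\}$ the constraint on $h$ disappears, so $h$ may take either sign there; testing with $h$ supported in $\{\omega>\delta\}$ and with both signs forces $\omega-\psi+Wx_2=0$ a.e.\ on $\{\omega>\delta\}$. Letting $\delta\downarrow 0$ gives $\omega=\psi-Wx_2$ a.e.\ on $\{\omega>0\}$, and in particular $\psi-Wx_2>0$ there. On $\{\omega=0\}$ we may only take $h\geq 0$, so \eqref{eq: plan-var2} yields $-\psi+Wx_2\geq 0$, i.e.\ $\psi-Wx_2\leq 0$ a.e.\ on $\{\omega=0\}$. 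Combining the two cases gives exactly $\omega=(\psi-Wx_2)_+$, which is \eqref{eq: EL}. (One should be mildly careful that the test functions in Proposition \ref{p: omega} are required to be bounded and compactly supported; this is handled by a routine exhaustion/truncation argument, approximating the indicator of a bounded piece of $\{\omega>\delta\}$, using that $\omega\in L^2$ and $x_2\omega\in L^1$ so that all the integrals in \eqref{eq: plan-var2} are continuous in $h$ with respect to a suitable norm.)

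It remains to identify the multiplier $W$. Since $\omega$ is a minimizer with $\int x_2\omega\,\dd x=\mu$, the homogeneity \eqref{eq: quad}, $\mathcal{I}_\mu=\mu^2\mathcal{I}_1$, fixes the Lagrange multiplier: differentiating $I_1[\omega+\varepsilon\eta]$ along the admissible variation and using that the constraint $\int x_2(\cdot)\,\dd x$ is linear, $W$ is the Lagrange multiplier associated to the impulse constraint. To pin down its value, multiply \eqref{eq: EL} (equivalently $\omega=\psi-Wx_2$ on the support) by $\omega$ and integrate: using \eqref{eq: D}, $\int\omega^2\,\dd x=\int\psi\omega\,\dd x-W\int x_2\omega\,\dd x=\|\nabla\psi\|_{L^2}^2-W\mu$, hence $W\mu=\|\nabla\psi\|_{L^2}^2-\|\omega\|_{L^2}^2=-2I_1[\omega]=-2\mathcal{I}_\mu=-2\mu^2\mathcal{I}_1$, so $W=-2\mu\mathcal{I}_1$, which is positive by \eqref{eq: negative}. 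The only genuinely delicate point in all of this is the justification that $W$ is well-defined and finite and that one can legitimately differentiate under the constraint, i.e.\ that $h_*$ can indeed be chosen as in Proposition \ref{p: omega} (nondegeneracy of $\{\omega>\delta_0\}$) and that all pairings $\langle\omega-\psi,\eta\rangle$ make sense — but this is exactly what the $C^1$ regularity of $I_1$ on $K$ and the stream-function estimates of Section 2.2 have been set up to provide; the sign-analysis itself is routine.
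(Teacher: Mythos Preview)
Your proof is essentially the same as the paper's: derive the variational inequality from Proposition \ref{p: omega} and \eqref{eq: DF}, do the sign analysis on $\{\omega>\delta\}$ and $\{\omega=0\}$, let $\delta\downarrow 0$, and then multiply \eqref{eq: EL} by $\omega$ and integrate (using \eqref{eq: D} and \eqref{eq: quad}) to identify $W=-2\mu\mathcal I_1$. One minor sign slip: with your definition $W=\int(\omega-\psi)h_*\,\dd x$ the inequality coming out of \eqref{eq: plan-var} is $\int(\omega-\psi-Wx_2)h\,\dd x\geq 0$, not \eqref{eq: plan-var2}; the paper sets $W=-\langle I_1'[\omega],h_*\rangle=\int(\psi-\omega)h_*\,\dd x$, which yields \eqref{eq: plan-var2} as you wrote it and is consistent with the rest of your argument.
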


\begin{proof}
We set $W=-I_1'[\omega]h_{*}$. By \eqref{eq: DF2}, 
\begin{align*}
0\leq\  <I_1'[\omega],\eta>=\left<I_1'[\omega],h-\left(\int_{\mathbb{R}^{2}_{+}}x_2hdx\right)h_{*}\right>=<I_1'[\omega], h>+W\int_{\mathbb{R}^{2}_{+}}x_2hdx
=<\omega-\psi+Wx_2,h>
\end{align*}
holds for all compactly suppored $h\in L^{\infty}(\mathbb{R}^{2}_{+})$ such that $h\geq 0$ on $\{0\leq \omega\leq \delta\}$. Thus $\omega$ satisfies 
\begin{align*}
\omega-\psi+Wx_2&\geq 0,\quad \textrm{on}\ \{0\leq \omega\leq \delta\},\\
\omega-\psi+Wx_2&=0,\quad \textrm{on}\ \{\omega> \delta\}.
\end{align*}
By letting $\delta\to 0$, 
\begin{align*}
\psi-Wx_2\leq 0&,\quad \textrm{on}\ \{\omega=0\},\\
\omega=\psi-Wx_2&,\quad \textrm{on}\ \{\omega> 0\}.
\end{align*}
Thus, \eqref{eq: EL} holds. We multiply $\omega$ by \eqref{eq: EL} and integrate it. By \eqref{eq: quad}, 
\begin{align*}
0=\frac{1}{2}\int_{\mathbb{R}^{2}_{+}}\left(\omega-(\psi-Wx_2)_+\right)\omega dx
=
\frac{1}{2}\int_{\mathbb{R}^{2}_{+}}\omega^{2}dx
-\frac{1}{2}\int_{\mathbb{R}^{2}_{+}}\psi\omega dx+\frac{W}{2}\int_{\mathbb{R}^{2}_{+}}x_2\omega dx=\mathcal{I}_{\mu}+\frac{W\mu}{2}
=\mu^{2}\mathcal{I}_{1}+\frac{W\mu}{2}.
\end{align*}
Thus, $W=-2\mu \mathcal{I}_{1}>0$.
\end{proof}

\subsection{Uniqueness of minimizers}

We show that minimizers are the Lamb dipoles \eqref{eq: Lamb} by the moving plane method.  

\begin{prop}\label{p: 4D}
Let $\mu>0$. Let $\omega\in K_{\mu}$ be a minimizer of $\mathcal{I}_{\mu}$. For $\psi=(-\Delta_D)^{-1}\omega$, set $\varphi(y)=\psi(y_4,|y'|)/|y'|$ for $y=(y',y_4)\in \mathbb{R}^{4}$. Then, $\varphi\in \dot{H}^{1}_{\textrm{axi}}(\mathbb{R}^{4})\subset L^{4}(\mathbb{R}^{4})$ satisfies $\nabla^{2}\varphi\in L^{2}(\mathbb{R}^{4})$ and 
\begin{align}
-\Delta \varphi=(\varphi-W)_{+},   \label{eq: 4D}
\end{align}
for $W=-2\mu \mathcal{I}_1$. Moreover, $\varphi$ is bounded uniformly continuous up to second order in $\mathbb{R}^{4}$ and 
\begin{align}
\varphi(y)\to 0\quad \textrm{as}\  |y|\to\infty.   \label{eq: Decay}
\end{align}
\end{prop}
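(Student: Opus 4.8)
The plan is to transfer the Euler--Lagrange equation \eqref{eq: EL} for $\omega$ into the four-dimensional setting using the isometry $\psi \mapsto \varphi$ from Lemma~\ref{l: SEI} and \eqref{eq: ISO}, and then to upgrade the regularity of $\varphi$ by elliptic bootstrapping. First I would record that $\omega \in K_\mu$ with $\psi = (-\Delta_D)^{-1}\omega$ gives, via the isometry $\dot H^1_0(\mathbb{R}^2_+) \cong \dot H^1_{\mathrm{axi}}(\mathbb{R}^4)$, the membership $\varphi \in \dot H^1_{\mathrm{axi}}(\mathbb{R}^4)$, and by the Sobolev embedding $\dot H^1(\mathbb{R}^4) \hookrightarrow L^4(\mathbb{R}^4)$ (equivalently, directly from $\psi/\sqrt{x_2} \in L^4(\mathbb{R}^2_+)$ in Lemma~\ref{p: IBP} together with $\|\varphi\|_{L^4(\mathbb{R}^4)}^4 = c\,\|\psi/\sqrt{x_2}\|_{L^4(\mathbb{R}^2_+)}^4$) that $\varphi \in L^4(\mathbb{R}^4)$. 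Now apply $(-\Delta)$ in $\mathbb{R}^4$ to $\varphi = \psi/|y'|$: since $-\Delta_D \psi = \omega$ in $\mathbb{R}^2_+$, the standard computation underlying \eqref{eq: ISO} (the Laplacian in $\mathbb{R}^4$ acting on a function of the form $f(y_4,|y'|)/|y'|$ reduces to the two-dimensional Laplacian of $f$ in the $(x_1,x_2) = (y_4,|y'|)$ variables) yields $-\Delta_{\mathbb{R}^4}\varphi = \omega(y_4,|y'|)/|y'|$ in the distributional/weak sense on $\mathbb{R}^4$. Substituting the Euler--Lagrange relation $\omega = (\psi - W x_2)_+$ from Lemma~\ref{l: EL} and using $x_2 = |y'|$ gives $-\Delta_{\mathbb{R}^4}\varphi = \big(\psi(y_4,|y'|)/|y'| - W\big)_+ = (\varphi - W)_+$, which is \eqref{eq: 4D}.

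Next I would bootstrap. The right-hand side $(\varphi - W)_+$ is pointwise bounded by $|\varphi| \in L^4(\mathbb{R}^4)$, so $-\Delta \varphi \in L^4(\mathbb{R}^4)$; by Calderón--Zygmund / $W^{2,p}$ elliptic estimates on $\mathbb{R}^4$, $\nabla^2 \varphi \in L^4(\mathbb{R}^4)$, hence in particular $\nabla^2\varphi \in L^2$ once we also control it near infinity — actually, to get $L^2$ one argues more simply: $-\Delta\varphi = (\varphi - W)_+$ and $(\varphi-W)_+$ is supported where $\varphi \ge W > 0$, a set of finite measure (since $\varphi \in L^4$), on which $(\varphi - W)_+ \le |\varphi|$; thus $(\varphi-W)_+ \in L^4 \cap L^2$ (interpolating between $L^4$ and $L^1$, noting $(\varphi - W)_+ \le \omega(y_4,|y'|)/|y'|$ and $\|{\cdot}\|_{L^1(\mathbb{R}^4)} = c\|x_2\omega\|_{L^1(\mathbb{R}^2_+)} < \infty$), giving $\nabla^2 \varphi \in L^2(\mathbb{R}^4)$. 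Since $\varphi \in L^4(\mathbb{R}^4)$ and $\nabla^2\varphi \in L^2(\mathbb{R}^4)$, the right-hand side $(\varphi-W)_+ \in L^2 \cap L^4$, and iterating the elliptic estimate together with the embedding $W^{2,p}(\mathbb{R}^4) \hookrightarrow L^\infty(\mathbb{R}^4)$ for $p>2$ gives $\varphi \in L^\infty$. Once $\varphi$ is bounded, $(\varphi - W)_+$ is bounded and compactly supported in the $L^p$-sense (supported on a set of finite measure), so it lies in every $L^p$, $p \in [1,\infty]$; feeding this back, $\varphi \in W^{2,p}_{\mathrm{loc}}$ for all $p$, and by Sobolev, $\varphi \in C^{1,\alpha}_{\mathrm{loc}}$; then the right-hand side is continuous, so by Schauder $\varphi \in C^{2,\alpha}_{\mathrm{loc}}$, giving the bounded uniform continuity up to second order (global boundedness of the derivatives follows from the global $W^{2,p}$ bound and $\|\nabla^2\varphi\|_{L^2}$ controlling $\|\nabla\varphi\|$ via Gagliardo--Nirenberg, or directly from the Newtonian potential representation).

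The decay \eqref{eq: Decay} I would obtain from the Newtonian potential representation: since $-\Delta\varphi = g$ with $g := (\varphi - W)_+ \in L^1 \cap L^\infty(\mathbb{R}^4)$ compactly supported in measure and $\varphi \in L^4$ decaying in the Sobolev sense, we may write $\varphi(y) = c_4 \int_{\mathbb{R}^4} |y - z|^{-2} g(z)\,dz$ (the unique $\dot H^1$ solution is the Newtonian potential of $g$ because $\varphi \in L^4$ rules out adding harmonic polynomials). Splitting the integral over $\{|z| \le R\}$ and $\{|z| > R\}$ and using $g \in L^1 \cap L^\infty$ with $\|g\|_{L^1(\{|z|>R\})} \to 0$, one gets $\varphi(y) \to 0$ as $|y| \to \infty$; alternatively, $\varphi \in L^4 \cap C^1$ with bounded gradient forces $\varphi \to 0$ at infinity by a standard argument. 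The main obstacle is the careful justification of the identity $-\Delta_{\mathbb{R}^4}\varphi = \omega(y_4,|y'|)/|y'|$ as genuine distributions on all of $\mathbb{R}^4$ — in particular that no singular contribution is produced along the axis $\{y' = 0\}$ despite the $1/|y'|$ factor — which requires either a direct computation with the isometry of Lemma~\ref{l: SEI} (testing against axisymmetric test functions and changing variables back to $\mathbb{R}^2_+$, where $-\Delta_D\psi = \omega$ holds) or an approximation argument; once this bridge is in place, the regularity and decay are routine elliptic theory on $\mathbb{R}^4$.
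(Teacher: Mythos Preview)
Your proposal is correct and follows essentially the same route as the paper: transfer the Euler--Lagrange equation \eqref{eq: EL} to $\mathbb{R}^4$ via the isometry \eqref{eq: ISO}, then bootstrap elliptic regularity using $(\varphi-W)_+\le\varphi\in L^4(\mathbb{R}^4)$, and finally read off the decay from $\varphi\in L^4$ together with uniform continuity. The only notable simplification in the paper is that $\nabla^2\varphi\in L^2(\mathbb{R}^4)$ is obtained in one line from the isometry identity $\|\nabla^2\varphi\|_{L^2(\mathbb{R}^4)}=\|\Delta\varphi\|_{L^2(\mathbb{R}^4)}=\sqrt{4\pi}\,\|\Delta\psi\|_{L^2(\mathbb{R}^2_+)}$ (since $-\Delta\psi=\omega\in L^2$), rather than via your finite-measure-support argument; and the paper deduces \eqref{eq: Decay} directly from $\varphi\in L^4\cap C(\mathbb{R}^4)$ uniformly continuous, which is exactly your ``alternatively'' remark.
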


\begin{proof}
By the transform $\psi\longmapsto \varphi$, $\dot{H}^{1}_{0}(\mathbb{R}^{2}_{+})$ is isometrically isomophic $\dot{H}^{1}_{\textrm{axi}}(\mathbb{R}^{4})$ and 
\begin{align*}
||\nabla^{2} \varphi||_{L^{2}(\mathbb{R}^{4})}=||\Delta \varphi||_{L^{2}(\mathbb{R}^{4})}=\sqrt{4\pi}||\Delta \psi||_{L^{2}(\mathbb{R}^{2}_{+})}.
\end{align*}
By dividing \eqref{eq: EL} by $x_2$, \eqref{eq: 4D} follows. Since $(\varphi-W)_{+}\leq \varphi$ and $\varphi\in L^{4}(\mathbb{R}^{4})$, applying elliptic $L^{p}$ regularity theory implies that $\varphi$ is locally uniformly bounded in $L^{4}$ up to second orders. Since $W^{2,4}(B)\subset C^{\alpha}(\overline{B})$ for all $0<\alpha<1$, applying elliptic H\"older regularity theory implies that $\varphi$ is bounded uniformly continuous up to second order in $\mathbb{R}^{4}$. By $\varphi\in L^{4}(\mathbb{R}^{4})$, the decay \eqref{eq: Decay} follows. 
\end{proof}

\begin{prop}\label{p: CS}
In Proposition \ref{p: 4D}, set $\Xi=\{y\in \mathbb{R}^{4}\ |\ \varphi(y)>W\ \}$. Then, $\overline{\Xi}$ is compact in $\mathbb{R}^{4}$.
\end{prop}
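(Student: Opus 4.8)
The plan is to show that $\varphi$ decays fast enough so that $\Xi = \{\varphi > W\}$ is bounded, and then note that it is automatically closed by continuity of $\varphi$; compactness follows. The starting point is the semilinear equation \eqref{eq: 4D}, namely $-\Delta\varphi = (\varphi - W)_+$ on $\mathbb{R}^4$, together with $\varphi \in L^4(\mathbb{R}^4)$, $\varphi$ bounded and uniformly continuous, and $\varphi(y)\to 0$ as $|y|\to\infty$ from Proposition \ref{p: 4D}. First I would observe that since $\varphi(y)\to 0$ while $W>0$, the set $\Xi$ is already bounded: there is $R_0$ with $\varphi < W$ for $|y| > R_0$, hence $\Xi \subset \overline{B_{R_0}}$. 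Combined with $\Xi$ being open (preimage of an open set under the continuous $\varphi$), its closure $\overline{\Xi}$ is a closed bounded subset of $\mathbb{R}^4$, hence compact.

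The only genuinely substantive point is therefore the decay statement $\varphi(y)\to 0$, which has already been extracted in Proposition \ref{p: 4D} from $\varphi\in L^4(\mathbb{R}^4)$ together with the uniform continuity of $\varphi$ (a uniformly continuous $L^4$ function on $\mathbb{R}^4$ must vanish at infinity). So in fact the proof here is short: I would simply invoke \eqref{eq: Decay}, pick $R_0$ so that $|\varphi(y)| < W$ for all $|y| \ge R_0$, conclude $\Xi \subseteq B_{R_0}$, and then $\overline{\Xi} \subseteq \overline{B_{R_0}}$ is compact since it is closed and bounded in $\mathbb{R}^4$.

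If one wanted a more quantitative handle (useful for the subsequent moving-plane argument, where one typically needs to know $\varphi$ and its derivatives decay at a definite rate), I would instead bootstrap from the equation: on the region $\{|y|\ge R_0\}$ one has $-\Delta\varphi = 0$ there if $\Xi\subset B_{R_0}$, so $\varphi$ is harmonic outside a ball and in $L^4$, forcing $\varphi(y) = O(|y|^{-2})$ and $\nabla\varphi(y), \nabla^2\varphi(y)$ decaying correspondingly, by the representation via the Newtonian potential of the compactly supported right-hand side $(\varphi-W)_+ \in L^\infty_c$. But for the bare statement of Proposition \ref{p: CS} this refinement is not needed.

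The main (and essentially only) obstacle is conceptual rather than technical: one must make sure the decay \eqref{eq: Decay} is genuinely available, i.e. that the transform $\psi \mapsto \varphi$ together with elliptic regularity really does put $\varphi$ in a class of uniformly continuous $L^4$ functions — but this is exactly the content of the preceding Proposition \ref{p: 4D}, so the argument is immediate.
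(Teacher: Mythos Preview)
Your proposal is correct and follows essentially the same approach as the paper: both arguments simply invoke the decay \eqref{eq: Decay} together with $W>0$ to conclude that $\Xi$ is bounded, hence $\overline{\Xi}$ is compact. The paper phrases it as a one-line contradiction argument, while you give the direct version and add some (unnecessary for this proposition) commentary on quantitative decay, but the substance is identical.
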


\begin{proof}
If $\overline{\Xi}$ is unbounded, there exists a sequence $\{y_n\}\subset \Xi$ such that $|y_n|\to\infty$. By \eqref{eq: Decay}, we obtain a contradiction $0<W<\varphi(y_n)\to 0$. 
\end{proof}

\begin{prop}\label{p: RS}
In Proposition \ref{p: 4D}, the function $\varphi(y',y_4+q)$ is radially symmetric and decreasing in $\mathbb{R}^{4}$ for some $q\in \mathbb{R}$.
\end{prop}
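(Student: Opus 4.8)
The plan is to apply the moving plane method (in the form due to Gidas--Ni--Nirenberg, adapted to the semilinear equation \eqref{eq: 4D} on all of $\mathbb{R}^4$) to the function $\varphi$. By Proposition \ref{p: 4D}, $\varphi$ solves $-\Delta\varphi = (\varphi-W)_+ =: f(\varphi)$ in $\mathbb{R}^4$ with $\varphi\in C^2$ bounded, and by Proposition \ref{p: CS} the set $\Xi = \{\varphi>W\}$ where the nonlinearity is active has compact closure; outside a large ball $B_R\supset\overline{\Xi}$ the equation is simply $\Delta\varphi=0$, and $\varphi\to 0$ at infinity by \eqref{eq: Decay}. The key structural facts I would exploit are: (a) $\varphi$ is axisymmetric with respect to the $y_4$-axis (this is built into its definition $\varphi(y)=\psi(y_4,|y'|)/|y'|$ and membership in $\dot H^1_{\mathrm{axi}}(\mathbb{R}^4)$), so radial symmetry really only needs to be established in the $y_4$-direction and in one transverse direction; and (b) $\varphi\ge 0$ everywhere, with the nonlinearity $f$ Lipschitz and non-decreasing, so the maximum principle and Hopf lemma are available for the linear operator $-\Delta - c(y)$ with $c = f'(\varphi)\chi_{\{\varphi>W\}}\in L^\infty$.

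The steps, in order, are as follows. First, fix a direction, say $e := e_1$, and for $\nu\in\mathbb{R}$ set $T_\nu=\{y_1=\nu\}$, $\Sigma_\nu=\{y_1>\nu\}$, and $y^\nu$ the reflection of $y$ across $T_\nu$; define $w_\nu(y)=\varphi(y^\nu)-\varphi(y)$ on $\Sigma_\nu$. Since $\overline{\Xi}$ is compact and $\varphi\to0$ at infinity, for $\nu$ large positive one has $\Sigma_\nu\cap\overline{\Xi}=\emptyset$ and moreover $w_\nu\ge 0$ on $\Sigma_\nu$ (this uses that $\varphi$ decays and is harmonic far out — the standard comparison argument: $w_\nu$ is superharmonic where $\varphi(y^\nu)>W\ge\varphi(y)$ is impossible, harmonic elsewhere, vanishes on $T_\nu$, and tends to $0$ at infinity, so by the maximum principle $w_\nu\ge0$). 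This starts the moving plane procedure. Second, decrease $\nu$ and let $\nu_0=\inf\{\nu : w_{\nu'}\ge 0 \text{ on }\Sigma_{\nu'}\ \forall\nu'\ge\nu\}$; one checks $\nu_0$ is finite, again because $\overline{\Xi}$ is bounded and $\varphi\to 0$. Third, at $\nu=\nu_0$ one has $w_{\nu_0}\ge 0$ on $\Sigma_{\nu_0}$, and $w_{\nu_0}$ satisfies $-\Delta w_{\nu_0} = c_\nu(y) w_{\nu_0}$ with $c_\nu = \big(f(\varphi(y^{\nu_0}))-f(\varphi(y))\big)/\big(\varphi(y^{\nu_0})-\varphi(y)\big)\in L^\infty$ (bounded since $f$ is Lipschitz), so by the strong maximum principle either $w_{\nu_0}\equiv 0$ on $\Sigma_{\nu_0}$ — i.e. $\varphi$ is symmetric across $T_{\nu_0}$ — or $w_{\nu_0}>0$ in $\Sigma_{\nu_0}$. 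Fourth, rule out the strict case: if $w_{\nu_0}>0$ throughout $\Sigma_{\nu_0}$, a continuity/compactness argument (using the Hopf lemma on the finite region near $\partial\Xi$ plus the exterior harmonic decay to handle the unbounded part) shows $w_{\nu_0-\varepsilon}\ge 0$ for small $\varepsilon>0$, contradicting minimality of $\nu_0$. Hence $\varphi$ is symmetric about some hyperplane $\{y_1 = \nu_0\}$ and, by the same argument applied to $-e_1$ or by monotonicity, decreasing in $|y_1-\nu_0|$. Fifth, repeat verbatim in the $e_2,e_3$ directions and in the $y_4$-direction; combined with the pre-existing axisymmetry in $(y_1,y_2,y_3)$, the four symmetry hyperplanes must all pass through a common point, which after a translation in $y_4$ alone (the center is forced to lie on the $y_4$-axis by axisymmetry) we may take to be the origin shifted by $q e_4$. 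This yields that $\varphi(y',y_4+q)$ is radially symmetric and radially decreasing.

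The main obstacle is Step four — the passage from "$w_{\nu_0}\ge 0$" to "$w_{\nu_0-\varepsilon}\ge 0$" — on the \emph{unbounded} domain $\Sigma_{\nu_0}$, where the classical Gidas--Ni--Nirenberg argument (designed for solutions decaying like $|y|^{2-n}$ or for bounded domains) does not apply off the shelf. The right framework here is the Berestycki--Nirenberg "moving plane without boundary conditions at infinity" technique, or equivalently one exploits the compact support of the nonlinearity: outside $B_R$, $w_\nu$ is harmonic, tends to $0$ at infinity, and vanishes on $T_\nu$, so by Phragmén--Lindelöf / the maximum principle on exterior domains it cannot attain a new negative minimum out there; hence any failure of positivity as $\nu$ decreases must occur in the fixed compact set $\overline{B_R}\cap\Sigma_{\nu}$, where the standard Hopf-lemma bootstrap works. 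A secondary technical point is the low regularity of $f(s)=(s-W)_+$, which is only Lipschitz, not $C^1$; but this is harmless because the moving plane argument only needs $f$ non-decreasing and Lipschitz (so that $c_\nu\in L^\infty$ and the strong maximum principle / Hopf lemma apply to $-\Delta - c_\nu$), a setting well covered in the literature.
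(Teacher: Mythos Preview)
Your strategy is sound and leads to a correct proof, but the paper proceeds somewhat differently. Rather than running the moving plane method by hand, the paper first writes $\varphi$ as the Newton potential $\varphi = \Gamma * (\varphi - W)_+$ with $\Gamma(y) = (4\pi^2 |y|^2)^{-1}$, uses this (together with the compactness of $\Xi$) to extract the precise far-field expansion $\varphi(y', y_4 + q) = p|y|^{-2} + g(y)$ with $|g| \lesssim |y|^{-4}$ and $|\nabla g| \lesssim |y|^{-5}$, and then invokes Fraenkel's symmetry theorem for positive solutions of semilinear problems with exactly this decay. So the paper outsources your Steps~1--5 to a black-box reference, at the price of first verifying the asymptotic hypotheses that reference requires.

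Two small points on your write-up. First, outside $B_R$ the function $w_\nu$ is \emph{superharmonic}, not harmonic: one has $-\Delta w_\nu = f(\varphi(y^\nu)) - f(\varphi(y)) = f(\varphi(y^\nu)) \ge 0$ there, since $f(\varphi(y)) = 0$ for $y \notin \Xi$ but the reflected point $y^\nu$ may lie in $\Xi$. This is harmless --- indeed superharmonicity is exactly what you need for the minimum principle to push any negative infimum of $w_\nu$ back into the compact set $\overline{\Sigma_\nu \cap B_R}$, which is the reduction you want. Second, for the whole argument it is worth recording explicitly that $\varphi > 0$ everywhere in $\mathbb{R}^4$ (by the positivity of the Newton kernel, or by the strong minimum principle applied to $-\Delta\varphi \ge 0$ with $\varphi \not\equiv 0$); this is what guarantees the plane cannot slide all the way to $-\infty$ and that the strong maximum principle in Step~3 gives a genuine dichotomy. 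With these clarifications your direct argument is complete and gives a more self-contained alternative to the paper's citation of Fraenkel.
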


\begin{proof}
Since $\varphi$ satisfies the equation \eqref{eq: 4D} with compactly supported $(\varphi-W)_{+}$ and the decay \eqref{eq: Decay}, $\varphi$ is expressed by the Newton potential
\begin{align*}
\varphi(y)=\Gamma*(\varphi-W)_{+}(y)=\int_{\Xi}\Gamma(y-z)(\varphi-W)_{+}(z)dz,
\end{align*}
for $\Gamma(y)=1/(4\pi^{2}|y|^{2})$. By using this representation, we duduce that there exist $p>0$ and $q\in \mathbb{R}$ such that 
\begin{align*}
&\varphi(y',y_4+q)=\frac{p}{|y|^{2}}+g(y), \\
&|g(y)|\leq \frac{C}{|y|^{4}},\quad |\nabla g(y)|\leq \frac{C}{|y|^{5}},\quad |y|\geq 2R+|q|,
\end{align*}
for $R>0$ such that $\Xi\subset B(0,R)$ with some constant $C$ \cite[Lemma 6.2]{AC22}. By this asymptotics, we apply Fraenkel's symmetry result \cite[Theorem 4.2]{Fra00} for positive solutions to the elliptic problem \eqref{eq: 4D}, and deduce that $\varphi(y',y_4+q)$ is radially symmetric and decreasing.  
\end{proof}

\begin{lem}\label{l: U}
Let $\mu>0$. Let $\omega\in K_{\mu}$ be a minimizer of $\mathcal{I}_{\mu}$. Then, $\omega$ is the Lamb dipole \eqref{eq: Lamb} for $\lambda=1$ and $W=\mu/(c_0^{2}\pi)$ up to translation for the $x_1$-variable. Moreover, $\mathcal{I}_{1}=-1/(2c_0^{2}\pi)$. 
\end{lem}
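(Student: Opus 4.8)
The plan is to integrate the radial ODE satisfied by $\varphi$ and thereby match it to the four-dimensional profile of the Lamb dipole. By Proposition~\ref{p: RS} we may translate in the $y_{4}$-variable and assume $\varphi$ is itself radially symmetric and non-increasing; by Proposition~\ref{p: 4D} it is $C^{2}$, solves $-\Delta\varphi=(\varphi-W)_{+}$ in $\mathbb{R}^{4}$ with $W=-2\mu\mathcal{I}_{1}$, and decays at infinity, and by Proposition~\ref{p: CS} the set $\Xi=\{\varphi>W\}$ is bounded; note $W>0$ since $\mathcal{I}_{1}<0$. First I would check that $\Xi\neq\emptyset$: otherwise $\varphi$ is harmonic and decaying on $\mathbb{R}^{4}$, hence $\varphi\equiv0$, so $\psi\equiv0$, so $\omega\equiv0$ by \eqref{eq: EL}, contradicting $\mu>0$. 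Since $\varphi$ is radial and non-increasing, $\Xi=B(0,a)$ for some $0<a<\infty$, and $\varphi(a)=W$ by continuity.

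Writing $\rho=|y|$ and $u=\varphi-W$, I would then solve the ODE on each region. On $\{\rho>a\}$ one has $(\rho^{3}\varphi')'=0$, so decay forces $\varphi(\rho)=B\rho^{-2}$, and the value match at $\rho=a$ gives $B=Wa^{2}$. On $\{0<\rho<a\}$ the equation is $u''+\tfrac{3}{\rho}u'+u=0$; the substitution $u=v/\rho$ turns this into the Bessel equation $v''+\tfrac{1}{\rho}v'+\bigl(1-\tfrac{1}{\rho^{2}}\bigr)v=0$, so $v=c_{1}J_{1}(\rho)+c_{2}Y_{1}(\rho)$, and boundedness of $u$ at the origin forces $c_{2}=0$. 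Hence $u(\rho)=c_{1}J_{1}(\rho)/\rho$ on $(0,a)$. Since $u>0$ on $(0,a)$, $u(a)=0$, and $J_{1}(\rho)/\rho\to\tfrac12>0$ as $\rho\to0$, necessarily $c_{1}>0$ and $a$ equals the first positive zero $c_{0}$ of $J_{1}$ — a later zero would force $u$ to change sign on $(0,a)$. Matching $u'$ at $\rho=c_{0}$ via $\tfrac{d}{d\rho}\bigl(J_{1}(\rho)/\rho\bigr)=-J_{2}(\rho)/\rho$ and $J_{2}(c_{0})=-J_{0}(c_{0})$ (from $J_{0}+J_{2}=\tfrac{2}{\rho}J_{1}$) then yields $c_{1}=-2W/J_{0}(c_{0})$.

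Comparing with \eqref{eq: Lamb}--\eqref{eq: LambConst}, the resulting $\varphi$ is precisely $\psi_{L}^{1,W}/x_{2}$ with $\psi_{L}^{1,W}=\Psi_{L}+Wx_{2}$, i.e.\ the four-dimensional profile of the Lamb dipole with $\lambda=1$ and multiplier $W$. Undoing the $y_{4}$-translation (which corresponds to a translation in $x_{1}$) and the isomorphism $\psi\mapsto\varphi$, and using $\omega=(\psi-Wx_{2})_{+}$ from \eqref{eq: EL}, we conclude that $\omega$ is the Lamb dipole \eqref{eq: Lamb} with $\lambda=1$ and this $W$, translated in the $x_{1}$-variable. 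Finally, since $\omega\in K_{\mu}$ its impulse equals $\mu$, so \eqref{eq: C} with $\lambda=1$ gives $\mu=c_{0}^{2}\pi W$, hence $W=\mu/(c_{0}^{2}\pi)$; combined with $W=-2\mu\mathcal{I}_{1}$ this forces $\mathcal{I}_{1}=-1/(2c_{0}^{2}\pi)$.

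The main obstacle is the ODE step: one must discard the singular branch $Y_{1}$ using interior regularity, extract exactly the first zero of $J_{1}$ from the one-sign constraint on $u$, and use the $C^{1}$ matching at the free boundary $\rho=c_{0}$ to tie the amplitude $c_{1}$ and the radius $a$ to the single parameter $W$. Verifying that $\Xi$ is nonempty and is genuinely a ball — the latter via radial monotonicity together with real-analyticity of solutions of $\Delta u+u=0$, which precludes flat pieces — is routine but must be noted.
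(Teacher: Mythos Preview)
Your proof is correct and follows essentially the same route as the paper: solve the ODE inside and outside the free boundary, discard the singular branch by regularity at the origin, identify the radius with the first Bessel zero from the sign constraint, and fix the amplitude by the $C^{1}$ match. The only cosmetic difference is that you stay in the four-dimensional radial variable and reduce via $u=v/\rho$ to the order-one Bessel equation, whereas the paper pulls back through the isometry \eqref{eq: ISO} to two dimensions and writes $\Psi=\eta(r)\sin\theta$ with $\eta$ solving the same Bessel equation directly; these are the two sides of the same transform and yield identical constants.
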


\begin{proof}
For a minimizer $\omega\in K_{\mu}$ of $\mathcal{I}_{\mu}$, we set $\psi=(-\Delta_{D})^{-1}\omega$. By Proposition \ref{p: RS}, $|y|=|x|$ and 
\begin{align*}
\frac{\psi(x_1+q,x_2)}{x_2}=\varphi(y',y_4+q)=\phi(|x|)
\end{align*}
for some decreasing function $\phi$. We may assume $q=0$ by translation. We set 
\begin{align*}
\Psi(x)=\psi(x)-Wx_2=(\phi(r)-W)r\sin\theta=\eta(r)\sin\theta, 
\end{align*}
and $\Omega=\{x\in \mathbb{R}^{2}_{+}\ |\ \psi-Wx_2>0\}=\{x\in \mathbb{R}^{2}_{+}\ |\ \eta(r)>0\ \}$. Since $\overline{\Omega}$ is compact and $\eta$ is decreasing by Propositions \ref{p: CS} and \ref{p: RS}, there exists $a>0$ such that $\Omega=\{x\in \mathbb{R}^{2}_{+}\ |\ |x|<a\ \}$. 

By $-\Delta \Psi=\Psi$ in $\Omega$, $\eta$ satisfies the Bessel's differential equation
\begin{align*}
\eta''+\frac{1}{r}\eta'-\frac{1}{r^{2}}\eta+\eta&=0,\quad \eta>0,\quad 0<r<a,\\
\eta(a)&=0.
\end{align*}
Since $\eta(r)>0$ is bounded at $r=0$ and $\eta(a)=0$, $\eta(r)=C_1J_1(r)$ with some constant $C_1$ and $a=c_0$ for the first zero point $c_0$ of $J_1$. Thus, $\Psi=C_1J_1(r)$ for $r<a$.

By $\Delta \Psi=0$ in $\mathbb{R}^{2}_{+}\backslash \Omega$, $\eta(r)=C_2/r+C_3r$ with some constants $C_2$ and $C_3$. By $\nabla \Psi\to -We_2$ as $|x|\to\infty$, $C_3=-W$. Since $\Psi$ vanishes at $r=a$, $C_2=Wa^{2}$. By continuity of $\partial_r\Psi$ at $r=a$ and $J_1'(c_0)=J_0(c_0)$, $C_1=-2W/J_0(c_0)$. Thus $\omega$ is the Lamb dipole \eqref{eq: Lamb} for $\lambda=1$ and $W=-2\mu \mathcal{I}_{1}$. By the impulse formula \eqref{eq: C}, $\mu=c_0^{2}\pi W$. Thus, $\mathcal{I}_{1}=-1/(2c_0^{2}\pi)$ and $W=\mu/(c_0^{2}\pi)$. 
\end{proof}

\begin{proof}[Proof of Theorem \ref{t: cthm} (ii)]
Let $\mu,\lambda>0$. Let $\omega\in K_{\mu}$ be a minimizer of $\mathcal{I}_{\mu,\lambda}$. By the scaling $\omega(x)=\lambda \tilde{\omega}(\sqrt{\lambda}x)$ and \eqref{eq: Scaling}, $\tilde{\omega}\in K_{\mu\sqrt{\lambda}}$ is a minimizer of $\mathcal{I}_{\mu\sqrt{\lambda}}$. By Lemma \ref{l: U}, $\tilde{\omega}$ is the Lamb dipole $\omega_{L}^{1,\mu\sqrt{\lambda}/(c_0^{2}\pi)}$ \eqref{eq: Lamb} up to translation for the $x_1$-variable. We may assume that $\tilde{\omega}=\omega_{L}^{1,\mu\sqrt{\lambda}/(c_0^{2}\pi)}$. By the scaling law \eqref{eq: LS}, 
\begin{align*}
\tilde{\omega}(x)=
\omega_{L}^{1,\mu\sqrt{\lambda}/(c_0^{2}\pi)}(x)=\frac{\mu\sqrt{\lambda}}{c_0^{2}\pi}\omega^{1,1}_{L}(x).
\end{align*}
We set $W=\mu\lambda/(c_0^{2}\pi)$. By the scaling law \eqref{eq: LS},
\begin{align*}
\omega(x)=\sqrt{\lambda}\tilde{\omega}(\sqrt{\lambda} x)=W\sqrt{\lambda}\omega^{1,1}_{L}(\sqrt{\lambda} x)=\omega^{\lambda,W}_{L}(x).
\end{align*}
The minimum \eqref{eq: Min} follows from $\mathcal{I}_{1,1}=-1/(2c_0^{2}\pi)$ and 
\begin{align*}
\mathcal{I}_{\mu,\lambda}=\mathcal{I}_{\mu\sqrt{\lambda},1}
=-\frac{1}{2c_0^{2}\pi}(\mu\sqrt{\lambda})^{2}=-\frac{1}{2c_0^{2}\pi}\mu^{2}\lambda.
\end{align*}
\end{proof}

\begin{proof}[Proof of Theorem \ref{t: SI}]
For arbitrary non-negative $\omega\in L^{2}(\mathbb{R}^{2}_{+})$ such that $x_2\omega\in L^{1}(\mathbb{R}^{2}_{+})$, we set $\mu=|| x_2\omega||_{L^{1}}$. Then for arbitrary $\lambda>0$, by \eqref{eq: Scaling} and \eqref{eq: quad}, 
\begin{align*}
\lambda\mathcal{I}_{1}||x_2\omega||_{L^{1}}^{2}=\mu^{2}\lambda\mathcal{I}_{1}=\mathcal{I}_{\mu,\lambda}\leq \frac{1}{2\lambda}||\omega||_{L^{2}}^{2}-\frac{1}{2}||\nabla \psi||_{L^{2}}^{2},
\end{align*}
for $\psi=(-\Delta_D)^{-1}\omega$. So we obtain
\begin{align*}
||\nabla \psi||_{L^{2}}^{2}\leq  \frac{1}{\lambda}||\omega||_{L^{2}}^{2}+(-2\mathcal{I}_1)\lambda ||x_2\omega||_{L^{1}}^{2}.
\end{align*}
By taking $\lambda>0$ so that the two terms on the right-hand side are equal, we obtain 
\begin{align*}
||\nabla \psi||_{L^{2}}\leq \sqrt{2\sqrt{-2\mathcal{I}_1}} ||\omega||_{L^{2}}^{\frac{1}{2}} ||x_2\omega||_{L^{1}}^{\frac{1}{2}}.
\end{align*}
By $\mathcal{I}_{1}=-1/(2c_0^{2}\pi)$, \eqref{eq: SI} follows.

For general $\omega\in L^{2}(\mathbb{R}^{2}_{+})$ such that $x_2\omega\in L^{1}(\mathbb{R}^{2}_{+})$, thanks to \eqref{eq: D} and $G(x,y)>0$, 
    \begin{align*}
\int_{\mathbb{R}^{2}_{+}}|\nabla \psi_{\omega}|^{2}dx=\int_{\mathbb{R}^{2}_{+}}\int_{\mathbb{R}^{2}_{+}}G(x,y)\omega(x)\omega(y)dxdy\leq\int_{\mathbb{R}^{2}_{+}}\int_{\mathbb{R}^{2}_{+}}G(x,y) |\omega(x)| |\omega(y)|dxdy =
\int_{\mathbb{R}^{2}_{+}}|\nabla \psi_{|\omega|}|^{2}dx.
\end{align*} 
We apply \eqref{eq: SI} for $\psi_{|\omega|}=(-\Delta_D)^{-1}|\omega|$ and conclude that \eqref{eq: SI} holds without the sign condition for $\omega$.
\end{proof}

\if{

\section{Open questions} 

We mention a few open questions related to the stability of the Lamb dipole \eqref{eq: Lamb}.

The first question is whether the odd-symmetric condition (O) and the non-negative condition (N) are removable from the initial disturbances in the stability theorem of the Lamb dipole (Theorem \ref{t: mthm}). This seems highly challenging. To begin with, it is not clear whether there is a variational characterization of the Lamb dipole in an admissible class without symmetry. Moreover, asymmetric perturbations will, in general, break the non-negative condition (N) immediately, which makes the impulse not coercive. \\

\begin{q}\label{q: 1}
Is the Lamb dipole \eqref{eq: Lamb} stable in the 2D Euler equations \eqref{eq: Euler} for initial disturbances without assuming the conditions (O) and (N)? \\
\end{q}

A relevant question is the stability of Chaplygin's asymmetric dipoles, which is a special relative equilibrium near the Lamb dipole \eqref{eq: Lamb}. This is even harder than Question \ref{q: 1}, as it contains all the difficulties in removing the odd symmetry and non-negative conditions from the Lamb dipole stability.\\

\begin{q}\label{q: 2}
Is the Chaplygin's asymmetric dipole stable in the 2D Euler equations \eqref{eq: Euler} for any disturbances?\\
\end{q}

In the works \cite{CJY}, \cite{AJY}, the stability of Lamb dipoles is investigated in the no-collision regime. If a slower Lamb dipole is located to the right of a faster one, then it is expected that the faster one will catch up with the slower one and collide. See \cite{VF89} for an experimental work. (It is not clear how to define the collision of vortices rigorously). It seems to be a very challenging problem to study what happens as $t\to\infty$, even in asymptotic cases. Since the Euler equations are not completely integrable, we expect the collision to be inelastic, but it could still be almost elastic. Such results are rigorously proved for several dispersive models \cite{MaMeMi10}, \cite{Mu10}, \cite{MaMe11}, \cite{MaMe18}. \\

\begin{q}
Describe the long-time behavior of solutions to the 2D Euler equations initiated from two Lamb dipoles consisting of a slower Lamb dipole located to the right of a faster one. \\
\end{q}

Lastly, it will be interesting to see whether asymptotic stability holds for perturbations of the Lamb dipole. This is a challenging problem since there is generic filamentation, and there might exist quasi-periodic motion close to the Lamb dipole. Actually, we do not even have a good understanding of the set of traveling waves in a neighborhood of the Lamb dipoles.\\

\begin{q}
Is the Lamb dipole asymptotically stable in the 2D Euler equations \eqref{eq: Euler}?\\
\end{q}

}\fi

\appendix

\section{Concentration compactness}

We give a proof for Theorem \ref{t: cthm} (i) by using the concentration-compactness lemma (Lemma \ref{l: CC} or see \cite{Lions84a}, \cite{CL82}, \cite[Lemma 1]{BNL13}, \cite[Lemma 4.1]{AC22}). The compactness argument in \cite{AC22} uses the $L^{1}$ boundedness of the minimizing sequence in all cases (dichotomy, vanishing, compactness) and excludes the possibility of the dichotomy by using Steiner symmetrization. We obtain the compactness of the minimizing sequence to \eqref{eq: MP} by using the strict subadditivity \eqref{eq: SSA} to exclude the possibility of the dichotomy and handle the cases of vanishing and compactness without using the $L^{1}$ boundedness.

\begin{lem}\label{l: CC}
Let $0<\mu<\infty$. Let $\{\rho_n\}\subset L^{1}(\mathbb{R}^{2}_{+})$ satisfy 
\begin{align*}
\rho_n\geq 0\quad n\geq 1,\quad \int_{\mathbb{R}^{2}_{+}}\rho_n\dd x=\mu_n\to \mu\quad \textrm{as}\ n\to\infty.
\end{align*}
There exists a subsequence $\{\rho_{n_k}\}$ satisfying the one of the following:

\noindent 
(i) (Compactness)
There exists a sequence $\{y_k\}\subset \overline{\mathbb{R}^{2}_{+}}$ such that $\rho_{n_k}(\cdot +y_k)$ is tight, i.e., for arbitrary $\varepsilon>0$ there exists $R>0$ such that 

\begin{align}
\liminf_{k\to\infty}\int_{B(y_k,R)\cap \mathbb{R}^{2}_{+}}\rho_{n_k}\dd x\geq \mu-\varepsilon.  \label{Compact}
\end{align}
\noindent 
(ii) (Vanishing) For each $R>0$,
\begin{align}
\lim_{k\to\infty}\sup_{y\in \mathbb{R}^{2}_{+} }\int_{B(y,R)\cap \mathbb{R}^{2}_{+}}\rho_{n_k}\dd x=0.    \label{Vanishing}
\end{align}
\noindent 
(iii) (Dichotomy) There exists $\alpha\in (0,\mu)$ such that for arbitrary $\varepsilon>0$ there exist $k_0\geq 1$ and $\{\rho_{k}^{1}\}$, $\{\rho_{k}^{2}\}\subset L^{1}(\mathbb{R}^{2}_{+})$ such that $\textrm{spt}\ \rho^{1}_{k}\cap \textrm{spt}\ \rho^{2}_{k}=\emptyset$, $0\leq \rho_{k}^{i} \leq \rho_{n_k}$, i=1,2, 
\begin{equation}
\begin{aligned}
&\limsup_{k\to\infty}\left\{||\rho_{n_k}-\rho_{k}^{1}-\rho_{k}^{2}||_{L^{1}}+
\left|\int_{\mathbb{R}^{2}_{+}}\rho_{k}^{1}\dd x-\alpha  \right|
+\left|\int_{\mathbb{R}^{2}_{+}}\rho_{k}^{2}\dd x-(\mu-\alpha)  \right|\right\}
\leq \varepsilon, \\
&\textrm{dist}\ (\textrm{spt}\ \rho^{1}_{k}, \textrm{spt}\ \rho^{2}_{k})\to \infty\quad \textrm{as}\ k\to\infty.
\end{aligned}
\label{Dic}
\end{equation}
\end{lem}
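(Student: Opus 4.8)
\textbf{Proof proposal for Lemma~\ref{l: CC}.}
The plan is to run the classical Lions concentration--compactness dichotomy; no ingredient specific to the present paper is needed, and the argument is the same on $\mathbb{R}^{2}_{+}$ as on $\mathbb{R}^{2}$. First I would introduce, for each $n$, the L\'evy concentration function $Q_n(R)=\sup_{y\in\overline{\mathbb{R}^{2}_{+}}}\int_{B(y,R)\cap\mathbb{R}^{2}_{+}}\rho_n\dd x$ for $R\ge 0$, which is nondecreasing in $R$ with $0\le Q_n\le\mu_n$. Since $\sup_n\mu_n<\infty$, Helly's selection theorem (used on $[0,N]$ for each $N$ and then diagonalized) furnishes a subsequence $\{\rho_{n_k}\}$ and a nondecreasing $Q\colon[0,\infty)\to[0,\mu]$ with $Q_{n_k}(R)\to Q(R)$ for every $R\ge 0$. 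Setting $\alpha:=\lim_{R\to\infty}Q(R)\in[0,\mu]$, I would then split into the three cases $\alpha=0$, $\alpha=\mu$, and $0<\alpha<\mu$, which will give vanishing, compactness, and dichotomy respectively. The case $\alpha=0$ is immediate: then $Q\equiv 0$, so $\sup_{y}\int_{B(y,R)\cap\mathbb{R}^{2}_{+}}\rho_{n_k}\dd x=Q_{n_k}(R)\to 0$ for every $R$, which is exactly \eqref{Vanishing}.

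In the case $\alpha=\mu$ I would fix an integer $M$ with $3/M<\mu$, and for each $m\ge M$ choose $R_m$ with $Q(R_m)>\mu-1/m$; then for $k$ large there is $y_k^{(m)}\in\overline{\mathbb{R}^{2}_{+}}$ with $\int_{B(y_k^{(m)},R_m)\cap\mathbb{R}^{2}_{+}}\rho_{n_k}\dd x>\mu-1/m$, and since $\mu_{n_k}<\mu+1/m$ eventually, the complement (in $\mathbb{R}^{2}_{+}$) of that ball carries $\rho_{n_k}$-mass below $2/m$. Hence for $m\ge M$ and $k$ large the balls $B(y_k^{(m)},R_m)$ and $B(y_k^{(M)},R_M)$ intersect --- otherwise the second would lie in the complement of the first, forcing $\mu-1/M<2/m\le 2/M$, impossible since $3/M<\mu$ --- so $B(y_k^{(m)},R_m)\subset B(y_k^{(M)},2R_m+R_M)$. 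Taking $y_k:=y_k^{(M)}$ (defined arbitrarily for the finitely many $k$ where $y_k^{(M)}$ has not been produced) yields, for every $m\ge M$, $\liminf_{k\to\infty}\int_{B(y_k,\,2R_m+R_M)\cap\mathbb{R}^{2}_{+}}\rho_{n_k}\dd x\ge\mu-1/m$, which is the tightness \eqref{Compact}.

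The case $0<\alpha<\mu$ is where the real work lies. I would fix $\varepsilon\in(0,\alpha)$, pick $R$ with $Q(R)>\alpha-\varepsilon$, and choose $y_k$ nearly realizing $Q_{n_k}(R)$; then $\int_{B(y_k,R)\cap\mathbb{R}^{2}_{+}}\rho_{n_k}\dd x$ has $\limsup\le\alpha$ (since $Q_{n_k}(R)\to Q(R)\le\alpha$) and $\liminf\ge Q(R)>\alpha-\varepsilon$. Moreover $\int_{B(y_k,jR)\cap\mathbb{R}^{2}_{+}}\rho_{n_k}\dd x\le Q_{n_k}(jR)\to Q(jR)\le\alpha$ for every integer $j$, so the mass of $\rho_{n_k}$ in the annulus $(B(y_k,jR)\setminus B(y_k,R))\cap\mathbb{R}^{2}_{+}$ has $\limsup$ at most $\alpha-Q(R)<\varepsilon$; a diagonal choice of indices then produces radii $R_k\to\infty$ for which this annular mass is below $\varepsilon$ for all large $k$. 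Setting $\rho_k^1:=\rho_{n_k}\mathbf{1}_{B(y_k,R)\cap\mathbb{R}^{2}_{+}}$ and $\rho_k^2:=\rho_{n_k}\mathbf{1}_{\mathbb{R}^{2}_{+}\setminus B(y_k,R_k)}$, the supports are disjoint with $\textrm{dist}(\textrm{spt}\,\rho_k^1,\textrm{spt}\,\rho_k^2)\ge R_k-R\to\infty$; $\|\rho_{n_k}-\rho_k^1-\rho_k^2\|_{L^{1}}$ is precisely the annular mass; $\limsup_k\bigl|\int_{\mathbb{R}^{2}_{+}}\rho_k^1\dd x-\alpha\bigr|<\varepsilon$; and $\int_{\mathbb{R}^{2}_{+}}\rho_k^2\dd x=\mu_{n_k}-\int_{B(y_k,R_k)\cap\mathbb{R}^{2}_{+}}\rho_{n_k}\dd x$ gives $\limsup_k\bigl|\int_{\mathbb{R}^{2}_{+}}\rho_k^2\dd x-(\mu-\alpha)\bigr|\le\varepsilon$, so replacing $\varepsilon$ by a fixed fraction of itself yields \eqref{Dic}. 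The step I expect to be the main obstacle is organizational rather than conceptual: keeping the nested subsequence extractions straight --- Helly first, then the diagonal choice of the growing radii $R_k$ --- so that a single subsequence $\{\rho_{n_k}\}$ serves all $\varepsilon>0$ simultaneously in the dichotomy alternative. The mechanism underneath is simply that $Q(R)\uparrow\alpha$ forces every fixed ball about $y_k$ to capture mass tending to $\alpha$, hence the residual mass $\mu-\alpha$ escapes beyond every fixed radius, which is exactly what lets $R_k$ be sent to infinity while keeping the annular loss under $\varepsilon$.
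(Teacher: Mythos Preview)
Your proposal is correct and is precisely the standard Lions argument via the L\'evy concentration function; the paper does not give its own proof of this lemma but merely states it with references to \cite{Lions84a}, \cite{CL82}, \cite[Lemma~1]{BNL13}, and \cite[Lemma~4.1]{AC22}, all of which proceed exactly as you outline. Your handling of the three cases, including the diagonal extraction of the radii $R_k$ in the dichotomy case and the observation that the Helly subsequence is fixed once and for all while the decompositions $\rho_k^1,\rho_k^2$ may vary with $\varepsilon$, matches the cited proofs.
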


\begin{proof}[Proof of Theorem \ref{t: cthm} (i)]
Let $\{\omega_n\}$ be a minimizing sequence such that $\omega_n \in K_{\mu_n}$, $\mu_n\to \mu$ and $I_{1}[\omega_n]\to \mathcal{I}_{\mu}$ as $n\to\infty$. By \eqref{eq: VB}, $\{\omega_n\}$ is uniformly bounded in $L^{2}$. We set $\rho_n=x_2\omega_n$ and apply Lemma \ref{l: CC}. Then, for a certain subsequence still denoted by $\{\omega_n\}$, one of the following three cases should occur.
\ \\

\noindent
Case 1.\ \textit{Dichotomy:}\\
There exists some $\alpha\in (0,\mu)$ such that for arbitrary $\varepsilon>0$, there exist $k_0\geq 1$ and 
$\{x_2\omega_{1,n}\},\{x_2\omega_{2,n}\}\subset L^{1}$ such that $\omega_{3,n}=\omega_{n}-\omega_{1,n}-\omega_{2,n}$ satisfies $\textrm{spt}\ \omega_{1,n}\cap \textrm{spt}\ \omega_{2,n}=\emptyset$, $0\leq \omega_{i,n}\leq \omega_n$, $i=1,2,3$, and 
\begin{align*}
&\limsup_{n\to\infty}\left\{ ||x_2\omega_{3,n}||_1+|\alpha_n-\alpha|+|\beta_n-(\mu-\alpha)|\right\}\leq \varepsilon, \\
&\alpha_n=\int_{\mathbb{R}^{2}_{+}}x_2\omega_{1,n}\dd x,\quad \beta_n=\int_{\mathbb{R}^{2}_{+}}x_2\omega_{2,n}\dd x,\\
&d_n=\textrm{dist}\ (\textrm{spt}\ \omega_{1,n}, \textrm{spt}\ \omega_{2,n})\to\infty\quad \textrm{as}\ n\to\infty.
\end{align*}
By choosing a subsequence, we may assume that $\alpha_n\to \alpha_{\varepsilon}$ and $\beta_n\to \beta_{\varepsilon}$ and $\sup_{n}||x_2\omega_{3,n}||_1\leq 2\varepsilon$. We set $\omega_n=\tilde{\omega}_{n}+\omega_{3,n}$ and $\psi_n=\tilde{\psi}_{n}+\psi_{3,n}$ by $\tilde{\psi}_{n}=(-\Delta_D)^{-1}\tilde{\omega}_{n}$ and $\psi_{3,n}=(-\Delta_D)^{-1}\omega_{3,n}$. Then, 
\begin{align*}
||\nabla \psi_n||_{L^{2}}^{2}=||\nabla \tilde{\psi}_n||_{L^{2}}^{2}+2(\nabla \tilde{\psi}_n,\nabla \psi_{3,n})_{L^{2}}+||\nabla \psi_{3,n}||_{L^{2}}^{2}.
\end{align*}
By \eqref{eq: SEI},
\begin{align*}
\left|(\nabla \tilde{\psi}_n,\nabla \psi_{3,n})_{L^{2}}\right|
\leq ||\nabla \tilde{\psi}_n||_{L^{2}}||\nabla \psi_{3,n}||_{L^{2}}
&\leq C_{*}^{2}||\tilde{\omega}_n||_{L^{2}}^{\frac{1}{2}}||x_2\tilde{\omega}_n||_{L^{1}}^{\frac{1}{2}}||\omega_{3,n}||_{L^{2}}^{\frac{1}{2}}||x_2\omega_{3,n}||_{L^{1}}^{\frac{1}{2}} \\
&\leq C_{*}^{2}||\omega_n||_{L^{2}}||x_2\omega_n||_{L^{1}}^{\frac{1}{2}}||x_2\omega_{3,n}||_{L^{1}}^{\frac{1}{2}} \\
&\lesssim \sqrt{\mu_n \varepsilon} ||\omega_n||_{L^{2}}.
\end{align*}
Similarly, we estimate $||\nabla \psi_{3,n}||_{L^{2}}^{2}\lesssim \varepsilon ||\omega_n||_{L^{2}}$. We further decompose $\tilde{\psi}_{n}={\psi}_{1,n}+\psi_{2,n}$ by $\psi_{i,n}=(-\Delta_D)^{-1}\omega_{i,n}$ for $i=1$,$2$ and 
\begin{align*}
||\nabla \tilde{\psi}_n||_{L^{2}}^{2}
=||\nabla \psi_{1,n}||_{L^{2}}^{2}
+2(\nabla \psi_{1,n},\nabla \psi_{2,n})_{L^{2}}
+||\nabla \psi_{2,n}||_{L^{2}}^{2}.
\end{align*}
By integration by parts and $G(x,y)\leq \pi^{-1}x_2y_2|x-y|^{-2}$,
\begin{align*}
(\nabla \psi_{1,n},\nabla \psi_{2,n})_{L^{2}}
=(\psi_{1,n},\omega_{2,n})_{L^{2}}
&=\int_{\mathbb{R}^{2}_{+}}\int_{\mathbb{R}^{2}_{+}}G(x,y)\omega_{1,n}(x)\omega_{2,n}(y)dxdy \\
&=\int\int_{|x-y|\geq d_n}G(x,y)\omega_{1,n}(x)\omega_{2,n}(y)dxdy 
\leq \frac{\mu_n^{2}}{\pi d_n^{2}}.
\end{align*}
We thus obtain
\begin{align*}
I_1[\omega_n]
\geq I_1[\omega_{1,n}]+I_1[\omega_{2,n}]-\frac{\mu_n^{2}}{\pi d_n^{2}}-C(\sqrt{\varepsilon}+\varepsilon)
\geq \mathcal{I}_{\alpha_n}+\mathcal{I}_{\beta_n}-\frac{\mu_n^{2}}{\pi d_n^{2}}-C(\sqrt{\varepsilon}+\varepsilon),
\end{align*}
with some constant $C$, independent of $n$. By letting $n\to\infty$,
\begin{align*}
\mathcal{I}_{\mu} \geq \mathcal{I}_{\alpha_{\varepsilon}}+\mathcal{I}_{\beta_{\varepsilon}}-C(\sqrt{\varepsilon}+\varepsilon).
\end{align*}
By letting $\varepsilon\to 0$, $\mathcal{I}_{\mu}\geq \mathcal{I}_{\alpha}+\mathcal{I}_{\mu-\alpha}$. This contradicts the strict subadditivity $\mathcal{I}_{\alpha}+\mathcal{I}_{\mu-\alpha}>\mathcal{I}_{\mu}$.
\ \\

\noindent
Case 2.\ \textit{Vanishing:}\\ 
For each $R>0$,
\begin{align*}
\lim_{n\to\infty}\sup_{y\in \mathbb{R}^{2}_{+} }\int_{B(y,R)\cap \mathbb{R}^{2}_{+}}x_2\omega_n\dd x=0.   
\end{align*}
We shall show $\lim_{n\to\infty}||\nabla \psi_n||_{L^{2}}= 0$ for $\psi_n=(-\Delta_{D})^{-1}\omega_n$. This implies $\mathcal{I}_{\mu}=\liminf_{n\to\infty}I_{1}[\omega_n]=\liminf_{n\to\infty}||\omega_n||_{L^{2}}^{2}/2\geq 0$ and a contradiction to $\mathcal{I}_{\mu}<0$.

We set 
\begin{align*}
||\nabla \psi_n||_{L^{2}}^{2}=\int_{\mathbb{R}^{2}_{+}}dx\int_{|x-y|<R}G(x,y)\omega_n(x)\omega_n(y)dy+\int_{\mathbb{R}^{2}_{+}}dx\int_{|x-y|\geq R}G(x,y)\omega_n(x)\omega_n(y)dy.
\end{align*}
By $G(x,y)\leq \pi^{-1}x_2y_2|x-y|^{-2}$, 
\begin{align*}
\int_{\mathbb{R}^{2}_{+}}dx\int_{|x-y|\geq R}G(x,y)\omega_n(x)\omega_n(y)dy
\leq \frac{\mu_n^{2}}{\pi R^{2}}.
\end{align*}
For $|x-y|<R$ and $G<Rx_2y_2$, 
\begin{align*}
\int_{\mathbb{R}^{2}_{+}}dx\int\limits_{\substack{|x-y|< R, \\ G<  Rx_2y_2}}
G(x,y)\omega_n(x)\omega_n(y)dy\leq R\mu_n \left(\sup_{x\in \mathbb{R}^{2}_{+}}\int_{B(x,R)\cap \mathbb{R}^{2}_{+}}y_2\omega_n(y)dy \right).
\end{align*}
For $|x-y|<R$ and $G\geq Rx_2y_2$, we have $|x-y|\leq  1/\sqrt{R}=:\delta$ and 
\begin{align*}
\int_{\mathbb{R}^{2}_{+}}dx\int\limits_{\substack{|x-y|< R, \\ G\geq   Rx_2y_2}}
G(x,y)\omega_n(x)\omega_n(y)dy
\leq \int_{\mathbb{R}^{2}_{+}}dx\int\limits_{|x-y|< \delta}
G(x,y)\omega_n(x)\omega_n(y)dy.
\end{align*}
By $\log(1+t)\lesssim t^{\alpha}$ for $\alpha\in (0,1]$ and all $t\geq 0$, 
\begin{align*}
G(x,y)\lesssim \frac{x_2^{\alpha}y_2^{\alpha}}{|x-y|^{2\alpha}}.
\end{align*}
For $1<r<2$ and the conjugate exponent $r'$, we apply the Young's convolution inequality for $1/r'=1/q+1/r-1$ with $\alpha q <1$ to estimate
\begin{align*}
\int_{\mathbb{R}^{2}_{+}}dx\int\limits_{|x-y|< \delta}
G(x,y)\omega_n(x)\omega_n(y)dy
&\lesssim 
\int_{\mathbb{R}^{2}_{+}}dx\int_{\mathbb{R}^{2}_{+}}
\frac{1}{|x-y|^{2\alpha}}1_{B(0,\delta)}(x-y)x_2^{\alpha}\omega_n(x)y_2^{\alpha}\omega_n(y)dy \\
&\leq \left\|\frac{1}{|x|^{2\alpha}}1_{B(0,\delta)}*(x_2^{\alpha}\omega_n)\right\|_{L^{r'}}||x_2^{\alpha}\omega_n||_{L^{r}} \\
&\leq \left\|\frac{1}{|x|^{2\alpha}}1_{B(0,\delta)}\right\|_{L^{q}}||x_2^{\alpha}\omega_n||_{L^{r}}^{2} \\
&\lesssim \delta^{\frac{2}{q}(1-\alpha q)} \mu_n^{2\alpha}=\left(\frac{1}{R}\right)^{\frac{1}{q}(1-\alpha q)}\mu_n^{2\alpha}.
\end{align*}
By letting $n\to\infty$ and then $R\to\infty$, we obtain $\lim_{n\to\infty}||\nabla \psi_n||_{L^{2}}=0$.
\ \\

\noindent
Case 3.\ \textit{Compactness:}\\
There exists a sequence $\{y_n\}\subset \mathbb{R}^{2}_{+}$ such that for arbitrary $\varepsilon>0$, there exists $R>0$ such that 
\begin{align*}
\liminf_{n\to\infty}\int_{B(y_n,R)\cap \mathbb{R}^{2}_{+}}x_2 \omega_n(x)dx\geq \mu-\varepsilon.
\end{align*}
By translation, we may assume that $y_n=(0,y_{2,n})$.\\
\noindent
(a) $\limsup_{n\to\infty}y_{2,n}=\infty$. We may assume $\lim_{n\to\infty}y_{2,n}=\infty$ by choosing a subsequence. We set
\begin{align*}
||\nabla \psi_n||_{L^{2}}^{2}=
\int_{B(y_n,R)\cap \mathbb{R}^{2}_{+}}\psi_n(x)\omega_n(x)dx 
+\int_{\mathbb{R}^{2}_{+}\backslash B(y_n,R)}\psi_n(x)\omega_n(x)dx.
\end{align*}
By applying \eqref{eq: SFE} for $p=\infty$,
\begin{align*}
\int_{B(y_n,R)\cap \mathbb{R}^{2}_{+}}\psi_n(x)\omega_n(x)dx\leq \frac{\mu_n}{({y_{2,n}-R})^{2/3}}\left\|\frac{\psi_n}{{x_2}^{1/3}} \right\|_{L^{\infty}}.
\end{align*}
By applying \eqref{eq: SFE} for $r=4/3$ with $\alpha=1/2$ and $r'=4$, 
\begin{align*}
\int_{\mathbb{R}^{2}_{+}\backslash B(y_n,R)}\psi_n(x)\omega_n(x)dx
&\leq \left\|\frac{\psi_n}{x_2^{1/2}} \right\|_{L^{4}(\mathbb{R}^{2}_{+}\backslash B(y_n,R))} \left\| x_2^{1/2}\omega_n \right\|_{L^{\frac{4}{3}}(\mathbb{R}^{2}_{+}\backslash B(y_n,R))} \\
&\leq \left\|\frac{\psi_n}{x_2^{1/2}} \right\|_{L^{4}(\mathbb{R}^{2}_{+})} \left\| x_2\omega_n \right\|_{L^{1}(\mathbb{R}^{2}_{+}\backslash B(y_n,R))}^{\frac{1}{2}}
\left\| \omega_n \right\|_{L^{2}(\mathbb{R}^{2}_{+})}^{\frac{1}{2}}.
\end{align*}\\
By $\limsup_{n\to\infty}||x_2\omega_n||_{L^{1}(\mathbb{R}^{2}_{+}\backslash B(y_n,R))}\leq \varepsilon$, letting $n\to\infty$ and $\varepsilon\to0$ imply $\lim_{n\to\infty}||\nabla \psi_n||_{L^{2}}^{2}=0$. This implies $\mathcal{I}_{\mu}=\lim_{n\to\infty}I_1[\omega_n]\geq 0$ and a contradiction to $\mathcal{I}_{\mu}<0$.

\noindent
(b) $\limsup_{n\to\infty}y_{2,n}<\infty$. We may assume $y_{n}=0$ by choosing a large $R>0$. By choosing a subsequence, $\omega_n\rightharpoonup \omega$ in $L^{2}(\mathbb{R}^{2}_{+})$ and 
\begin{align*}
\int_{B(0,R)\cap \mathbb{R}^{2}_{+}}x_2 \omega dx\geq \mu-\varepsilon.
\end{align*}
Thus, $||x_2\omega||_{L^{1}}=\mu$ and $\omega\in K_{\mu}$. 

We shall show that $\lim_{n\to\infty}||\nabla \psi_n||_{L^{2}}=||\nabla \psi ||_{L^{2}}$ for $\psi=(-\Delta_D)^{-1}\omega$. This implies 
\begin{align*}
\mathcal{I}_{\mu}
=\lim_{n\to\infty}I_1[\omega_n]
=\liminf_{n\to\infty}\left(\frac{1}{2}||\omega_n||_{L^{2}}^{2}-\frac{1}{2}||\nabla \psi_n||_{L^{2}}^{2}\right)
\geq \frac{1}{2}||\omega||_{L^{2}}^{2}-\frac{1}{2}||\nabla \psi||_{L^{2}}^{2}=I_1[\omega]\geq \mathcal{I}_{\mu},
\end{align*}
and $\lim_{n\to\infty}||\omega_n||_{L^{2}}=||\omega ||_{L^{2}}$. Thus, $\omega_n\to \omega$ in $L^{2}(\mathbb{R}^{2}_{+})$. By 
\begin{align*}
\limsup_{n\to\infty}\int_{\mathbb{R}^{2}_{+}\backslash B(0,R)}x_2\omega_n dx\leq \varepsilon,
\end{align*}
we have $\int_{\mathbb{R}^{2}_{+}\backslash B(0,R)}x_2\omega dx\leq \varepsilon$ and 
\begin{align*}
\int_{\mathbb{R}^{2}_{+}}x_2|\omega_n-\omega |dx
&=\int_{B(0,R)\cap \mathbb{R}^{2}_{+}}x_2|\omega_n-\omega |dx
+\int_{\mathbb{R}^{2}_{+}\backslash B(0,R) }x_2|\omega_n-\omega |dx \\
&\lesssim R^{2}||\omega_n-\omega||_{L^{2}(B(0,R)\cap \mathbb{R}^{2}_{+})}
+\int_{\mathbb{R}^{2}_{+}\backslash B(0,R) }x_2(\omega_n+\omega)dx.
\end{align*}
By letting $n\to\infty$ and $\varepsilon\to 0$, $x_2\omega_n\to x_2\omega$ in $L^{1}(\mathbb{R}^{2}_{+})$ follows.

We set 
\begin{align*}
||\nabla \psi_n||_{L^{2}}^{2}=
\int_{B(0,R)\cap \mathbb{R}^{2}_{+}}\psi_n(x)\omega_n(x)dx 
+\int_{\mathbb{R}^{2}_{+}\backslash B(0,R)}\psi_n(x)\omega_n(x)dx.
\end{align*}
We use a short-hand notation $B_{+}=B(0,R)\cap \mathbb{R}^{2}_{+}$ and $B_{+}^{c}=\mathbb{R}^{2}_{+}\backslash B(0,R)$. By using $G(x,y)=G(y,x)$,
\begin{align*}
\int_{B_{+}}\psi_n(x)\omega_n(x)dx 
&=\int_{B_{+}}\omega_n(x)\left(\int_{B_{+}}G(x,y)\omega_n(y)dy+\int_{B_{+}^{c} }G(x,y)\omega_n(y)dy \right)dx \\
&=\int_{B_{+}}\int_{B_{+}}G(x,y)\omega_n(x)\omega_n(y)dxdy 
+\int_{\mathbb{R}^{2}_{+}\backslash B(0,R) }\int_{B(0,R)\cap \mathbb{R}^{2}_{+}}G(x,y)\omega_n(y)\omega_n(x)dxdy\\
&\leq \int_{B_{+}}\int_{B_{+}}G(x,y)\omega_n(x)\omega_n(y)dxdy+\int_{B_{+}^{c} } \psi_n(x)\omega_n(x)dx.
\end{align*}
We thus estimate 
\begin{align*}
\left| ||\nabla \psi_n||_{L^{2}}^{2}-\int_{B_{+}}\int_{B_{+}}G(x,y)\omega_n(x)\omega_n(y)dxdy\right|
\leq 2 \int_{B_{+}^{c} } \psi_n(x)\omega_n(x)dx. 
\end{align*}
By applying \eqref{eq: SFE} for $r=4/3$ with $\alpha=1/2$ and $r'=4$, 
\begin{align*}
\int_{B_{+}^{c}}\psi_n(x)\omega_n(x)dx
\leq \left\|\frac{\psi_n}{x_2^{1/2}} \right\|_{L^{4}(B_{+}^{c})} \left\| x_2^{1/2}\omega_n \right\|_{L^{\frac{4}{3}}(B_{+}^{c})} 
\leq \left\|\frac{\psi_n}{x_2^{1/2}} \right\|_{L^{4}(\mathbb{R}^{2}_{+})} \left\| x_2\omega_n \right\|_{L^{1}(B_{+}^{c})}^{\frac{1}{2}}
\left\| \omega_n \right\|_{L^{2}(\mathbb{R}^{2}_{+})}^{\frac{1}{2}}.
\end{align*}
We obtain
\begin{align*}
\limsup_{n\to\infty}\left|  ||\nabla \psi_n||_{L^{2}}^{2}-\int_{B_{+}}\int_{B_{+}}G(x,y)\omega_n(x)\omega_n(y)dxdy\right|
\leq C\varepsilon^{\frac{1}{2}}.
\end{align*}
Similarly, we obtain
\begin{align*}
\left|  ||\nabla \psi ||_{L^{2}}^{2}-\int_{B_{+}}\int_{B_{+}}G(x,y)\omega(x)\omega(y)dxdy\right|
\leq C\varepsilon^{\frac{1}{2}}.
\end{align*}
Since $G(x,y)\in L^{2}(B_+\times B_+)$ and $\omega_n(x)\omega_n(y)\rightharpoonup \omega (x)\omega (y)$ in $L^{2}(B_+\times B_+)$, we obtain $\lim_{n\to\infty}||\nabla \psi_n||_{L^{2}}=||\nabla \psi ||_{L^{2}}$. The proof is now complete.
\end{proof}

\section{Orbital stability}

We prove Theorem \ref{t: mthm}. The existence of odd-symmetric global weak solutions to \eqref{eq: Euler} is known for odd-symmetric initial data $\zeta_0\in L^{2}\cap L^{1}(\mathbb{R}^{2})$ and $x_2\zeta_0\in L^{1}(\mathbb{R}^{2})$ and $\zeta_0\geq 0$ for $x_2\geq 0$ \cite[Proposition 5.1]{AC22}. For $\zeta_0\in L^{2}(\mathbb{R}^{2})$ such that $x_2\zeta_0\in L^{1}(\mathbb{R}^{2})$, $v_0=k*\zeta_0\in L^{2}(\mathbb{R}^{2})$ by the energy inequality \eqref{eq: SEI}. We show odd-symmetric global weak solutions exist without assuming the $L^{1}$-condition for $\zeta_0$.

\subsection{The existence of global weak solutions}

\begin{prop}\label{p: GWS}
For odd-symmetric initial data $\zeta_0\in L^{2}(\mathbb{R}^{2})$ such that $x_2\zeta_0\in L^{1}(\mathbb{R}^{2})$ and $\zeta_0\geq 0$ for $x_2\geq 0$, there exists an odd-symmetric global weak solution $\zeta\in BC([0, \infty);L^{2}(\mathbb{R}^{2}))$ of \eqref{eq: Euler} such that $x_2\zeta\in BC([0, \infty); L^1(\mathbb{R}^{2}))$, $\zeta\geq 0$ for $x_2\geq  0$, 
\begin{align}
\int_{0}^{\infty}\int_{\mathbb{R}^{2}}\zeta (\varphi_t+v\cdot \nabla \varphi)dxdt
=-\int_{\mathbb{R}^{2}}\zeta_0(x)\varphi(x,0)dx,   \label{eq: WF}
\end{align}
for $v=k*\zeta$ and all $\varphi\in C^{\infty}_{c}(\mathbb{R}^{2}\times [0,\infty))$. This weak solution satisfies the conservation 
\begin{align}
||\zeta||_{L^{2}(\mathbb{R}^{2}_{+})}&=||\zeta_0||_{L^{2}(\mathbb{R}^{2}_{+})},   \label{eq: Z}\\
||x_2\zeta||_{L^{1}(\mathbb{R}^{2}_{+})}&=||x_2\zeta_0||_{L^{1}(\mathbb{R}^{2}_{+})},  \label{eq: P} \\
||v||_{L^{2}(\mathbb{R}^{2}_{+})}&=||v_0||_{L^{2}(\mathbb{R}^{2}_{+})}.  \label{eq: E}
\end{align}
\end{prop}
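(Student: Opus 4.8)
The plan is to remove the $L^{1}$ hypothesis from \cite[Proposition 5.1]{AC22} by an approximation argument. First I would truncate the initial datum: set $\zeta_{0}^{(n)}=\zeta_{0}\mathbf{1}_{\{|x|\le n\}}$, which is odd-symmetric, non-negative for $x_{2}\ge0$, lies in $L^{2}\cap L^{1}(\mathbb{R}^{2})$ with $x_{2}\zeta_{0}^{(n)}\in L^{1}$, and satisfies $\zeta_{0}^{(n)}\to\zeta_{0}$ in $L^{2}(\mathbb{R}^{2})$ and $x_{2}\zeta_{0}^{(n)}\to x_{2}\zeta_{0}$ in $L^{1}(\mathbb{R}^{2})$. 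For each $n$, \cite[Proposition 5.1]{AC22} furnishes an odd-symmetric global weak solution $\zeta^{(n)}\in BC([0,\infty);L^{2}\cap L^{1}(\mathbb{R}^{2}))$ with $\zeta^{(n)}\ge0$ for $x_{2}\ge0$, conserving enstrophy, mass, impulse and kinetic energy. Using these conservations and the sign condition, $\|\zeta^{(n)}(t)\|_{L^{2}(\mathbb{R}^{2}_{+})}=\|\zeta_{0}^{(n)}\|_{L^{2}(\mathbb{R}^{2}_{+})}\le\|\zeta_{0}\|_{L^{2}(\mathbb{R}^{2}_{+})}$ and $\|x_{2}\zeta^{(n)}(t)\|_{L^{1}(\mathbb{R}^{2}_{+})}=\int_{\mathbb{R}^{2}_{+}}x_{2}\zeta_{0}^{(n)}\,dx\le\|x_{2}\zeta_{0}\|_{L^{1}(\mathbb{R}^{2}_{+})}$ for all $t\ge0$; hence by the energy inequality \eqref{eq: SEI} on $\mathbb{R}^{2}_{+}$ (using that, by the method of images, $v^{(n)}=k*\zeta^{(n)}$ restricts to $\nabla^{\perp}\psi^{(n)}$ on $\mathbb{R}^{2}_{+}$ with $\psi^{(n)}=(-\Delta_{D})^{-1}\zeta^{(n)}|_{\mathbb{R}^{2}_{+}}$), the family $\{v^{(n)}\}$ is bounded in $L^{\infty}([0,\infty);L^{2}(\mathbb{R}^{2}))$.

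Next I would pass to the limit along a subsequence. Weak-$\ast$ compactness gives $\zeta^{(n)}\rightharpoonup\zeta$ in $L^{\infty}([0,T];L^{2})$; the transport equation $\partial_{t}\zeta^{(n)}=-\nabla\cdot(v^{(n)}\zeta^{(n)})$ with $v^{(n)}\zeta^{(n)}$ bounded in $L^{\infty}_{t}L^{1}_{x}$ controls the time modulus, so $\zeta\in C_{w}([0,\infty);L^{2})$ and $\zeta^{(n)}\to\zeta$ in $C([0,T];H^{-s}_{\mathrm{loc}})$ for $s>0$. Since $\zeta^{(n)}$ is bounded in $L^{2}(B_{R})$ and the Riesz transforms give $\nabla v^{(n)}$ bounded in $L^{2}_{\mathrm{loc}}$, $v^{(n)}$ is bounded in $L^{2}([0,T];H^{1}(B_{R}))$ with $\partial_{t}v^{(n)}$ bounded in a negative-order space, so Aubin--Lions--Simon yields $v^{(n)}\to v$ strongly in $L^{2}_{\mathrm{loc}}(\mathbb{R}^{2}\times[0,T])$ with $v=k*\zeta$. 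Then $\zeta^{(n)}v^{(n)}\rightharpoonup\zeta v$ in $L^{1}_{\mathrm{loc}}$ (weak $L^{2}$ times strong $L^{2}$), which is enough to pass to the limit in \eqref{eq: WF}, the datum term converging since $\zeta_{0}^{(n)}\to\zeta_{0}$ in $L^{2}$. Odd symmetry and $\zeta\ge0$ for $x_{2}\ge0$ survive these limits, and Fatou's lemma gives $\|x_{2}\zeta(t)\|_{L^{1}(\mathbb{R}^{2}_{+})}\le\|x_{2}\zeta_{0}\|_{L^{1}(\mathbb{R}^{2}_{+})}$, so $\zeta\in L^{\infty}([0,\infty);L^{2})$ and $x_{2}\zeta\in L^{\infty}([0,\infty);L^{1}(\mathbb{R}^{2}_{+}))$.

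It remains to establish the three conservation identities for $\zeta$ and to upgrade the time continuity. For the impulse I would test \eqref{eq: WF} with $\varphi=x_{2}\chi_{R}(x)$, $\chi_{R}(x)=\chi(|x|/R)$ a standard cutoff, and let $R\to\infty$: the contribution of $x_{2}\nabla\chi_{R}$ is bounded by $\int_{\{|x|\ge R\}}|v||\zeta|\,dx\to0$ (as $v\zeta\in L^{1}$), while $\int_{\mathbb{R}^{2}_{+}}v_{2}\zeta\,dx=0$ by the integration by parts underlying Lemma \ref{p: IBP} (write $v_{2}\zeta=-\partial_{1}\psi\,(-\Delta_{D}\psi)$ and use $\psi=0$ on $\partial\mathbb{R}^{2}_{+}$ together with $\psi/\sqrt{x_{2}}\in L^{4}$, $\sqrt{x_{2}}\,\zeta\in L^{4/3}$); this yields $\|x_{2}\zeta(t)\|_{L^{1}(\mathbb{R}^{2}_{+})}=\|x_{2}\zeta_{0}\|_{L^{1}(\mathbb{R}^{2}_{+})}$. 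For the enstrophy, the limit $\zeta$ is transported by the measure-preserving flow of the divergence-free field $v\in L^{2}_{t}H^{1}_{\mathrm{loc}}$ (DiPerna--Lions renormalization), hence $\|\zeta(t)\|_{L^{2}(\mathbb{R}^{2}_{+})}=\|\zeta_{0}\|_{L^{2}(\mathbb{R}^{2}_{+})}$; equivalently one transfers the approximate conservation to the limit via the equimeasurability of $\zeta(t)$ with $\zeta_{0}$. The energy identity then follows from $\|v(t)\|_{L^{2}(\mathbb{R}^{2}_{+})}^{2}=\int_{\mathbb{R}^{2}_{+}}\!\int_{\mathbb{R}^{2}_{+}}G(x,y)\zeta(t,x)\zeta(t,y)\,dx\,dy$ (Lemma \ref{p: IBP}) and the conservation of this quadratic Hamiltonian along the flow. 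Finally, strong $L^{2}$-in-time continuity comes from $\zeta\in C_{w}([0,\infty);L^{2})$ together with $t\mapsto\|\zeta(t)\|_{L^{2}}$ being constant; and since $x_{2}\zeta(t)\ge0$ with conserved total mass $\int_{\mathbb{R}^{2}_{+}}x_{2}\zeta(t)\,dx$, strong $L^{2}$ continuity gives a.e.\ convergence along any time sequence, whence Scheff\'e's lemma upgrades this to $x_{2}\zeta\in BC([0,\infty);L^{1}(\mathbb{R}^{2}_{+}))$.

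The main obstacle I anticipate is transferring the conservation laws to the weak limit with no loss: justifying enstrophy and (especially) energy conservation, and making the identity $\int_{\mathbb{R}^{2}_{+}}v_{2}\zeta\,dx=0$ behind impulse conservation fully rigorous, since the only \emph{global} control on the velocity is the $L^{2}$ bound produced by \eqref{eq: SEI} and the vorticity is not assumed in $L^{1}$. Once impulse conservation is secured, the remaining qualitative claims follow by soft arguments (Fatou, Scheff\'e, weak-to-strong continuity).
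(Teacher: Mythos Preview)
Your proposal is correct and follows essentially the same route as the paper: approximate $\zeta_0$ by data in $L^2\cap L^1$ (the paper takes $C_c^\infty$ approximants, you take indicator truncations---either works), invoke \cite[Proposition~5.1]{AC22} for the approximants, extract a weak-$\ast$ limit in $L^\infty_tL^2_x$, use Aubin--Lions on the velocities to pass to the limit in \eqref{eq: WF}, obtain enstrophy conservation and $BC$-in-$L^2$ via DiPerna--Lions renormalization, and recover impulse and energy conservation from the weak form by cut-off arguments as in \cite{AC22}. Your write-up is in fact more explicit than the paper's on the impulse computation and on the upgrade $x_2\zeta\in BC([0,\infty);L^1)$ via Scheff\'e.
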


\begin{proof}
For odd-symmetric $\zeta_0\in L^{2}(\mathbb{R}^{2})$ such that $x_2\zeta_0\in L^{1}(\mathbb{R}^{2})$ and $\zeta_0\geq 0$ for $x_2\geq 0$, we take an odd-symmetric sequence $\{\zeta_{0,n}\}\subset  C_c^\infty(\mathbb{R}^{2})$
such that $\zeta_{0,n}\geq 0$ for $x_2\geq 0$, $\zeta_{0,n}\to \zeta_{0}$ in $L^{2}(\mathbb{R}^{2})$ and $x_2\zeta_{0,n}\to x_2\zeta_{0}$ in $L^{1}(\mathbb{R}^{2})$. Then, there exists an odd-symmetric global weak solution $\zeta_{n}\in BC([0, \infty);L^{2}(\mathbb{R}^{2}))$ of \eqref{eq: Euler} for $\zeta_{0,n}$. By \eqref{eq: Z}, $\zeta_{n}$ is uniformly bounded in $L^{\infty}(0,\infty; L^{2})$. For arbitrary $T>0$, we take a subsequence such that 
\begin{align*}
\zeta_n\stackrel{*}{\rightharpoonup} \zeta\quad \textrm{in}\ L^{\infty}(0,T; L^{2}(\mathbb{R}^{2})).
\end{align*}
By \eqref{eq: Z}, \eqref{eq: E}, and the continuous embedding $H^{1}\subset L^{4}$, $v_n$ is uniformly bounded in $L^{\infty}(0,\infty; H^{1})\subset L^{\infty}(0,\infty; L^{4})$. In particular, $v_n\otimes v_n$ is uniformly bounded in $L^{\infty}(0,\infty; L^{2})$. By \eqref{eq: WF}, $v_n$ satisfies 
\begin{align*}
\partial_t v_n+\nabla \cdot \mathbb{P}(v_n\otimes v_n)=0\quad \textrm{on}\ H^{1}(\mathbb{R}^{2})^*,
\end{align*}
for the projection operator from $L^{2}(\mathbb{R}^{2})$ onto its solenoidal subspace. In particular, $v_n$ is uniformly bounded in $L^{\infty}(0,T; H^{-1}(B(0,R)))$ for $H^{-1}(B(0,R))=H^{1}_{0}(B(0,R))^{*}$ and any $R>0$. By Aubin--Lions theorem, there exists a subsequence such that 
\begin{align*}
v_n\to v\quad \textrm{in}\ L^{2}(0,T; L^{2}(B(0,R))). 
\end{align*}
Since $(\zeta_n,v_n)$ satisfies \eqref{eq: WF}, the limit $(\zeta,v)$ also satisfies \eqref{eq: WF} and $v=k*\zeta$. By the Sobolev regularity $v\in L^{\infty}(0,\infty; H^{1}(\mathbb{R}^{2}))$ and the consistency result \cite[Theorem 10.3 (1)]{DL89}, the limit $\zeta$ is a renormalized solution to the transport equation for $v$. Thus, $\zeta\in BC([0,\infty; L^{2})$ and the equality \eqref{eq: Z} holds by the property of the renormalized solution \cite[Theorem 10.3 (2)]{DL89}. The conservation \eqref{eq: P} and \eqref{eq: E} follow from the weak form \eqref{eq: WF} by applying the same cut-off function argument as in \cite{AC22}.
\end{proof}

\subsection{Application to stability}

Let $0<\lambda,W<\infty$ and $\mu=P=c_0^{2}\pi W/\lambda$. We set the distance from the orbit of the Lamb dipole $\omega_{L}=\omega_{L}^{\lambda, W}$ by 
\begin{align*}
d(\zeta,\omega_{L})=
\inf_{y\in \partial\mathbb{R}^{2}_{+}}\left\{\left\|\zeta-\omega_{L}(\cdot+y) \right\|_{L^{2}(\mathbb{R}^{2}_{+})}+\left\|x_2(\zeta-\omega_{L}(\cdot+y)) \right\|_{L^{1}(\mathbb{R}^{2}_{+})}\right\}.
\end{align*}

\begin{proof}[Proof of Theorem \ref{t: mthm}]
We argue by contradiction. Suppose that the assertion of Theorem \ref{t: mthm} were false. Then, there exists $\varepsilon_0>0$ such that for arbitrary $n\geq 1$, there exists $\zeta_{0,n}\in L^{2}(\mathbb{R}^{2}_{+})$ satisfying $x_2\zeta_{0,n}\in L^{1}(\mathbb{R}^{2}_{+})$, $\zeta_{0,n}\geq 0$,
\begin{align*}
\inf_{y\in \partial\mathbb{R}^{2}_{+}}\left\|\zeta_{0,n}-\omega_{L}(\cdot+y) \right\|_{L^{2}(\mathbb{R}^{2}_{+})}+\left|\int_{\mathbb{R}^{2}_{+}}x_2\zeta_{0,n}dx-\mu\right|\leq \frac{1}{n},
\end{align*} 
and the odd-symmetric global weak solutions $\zeta_n(x,t)$ in Proposition \ref{p: GWS} satisfies 
\begin{align*}
d(\zeta_n(t_n),\omega_{L})\geq \varepsilon_0,
\end{align*}
for some $t_n\geq 0$. We may assume that $t_n>0$ and denote the sequence by $\zeta_n=\zeta_n(t_n)$. We take $y_n\in \partial \mathbb{R}^{2}_{+}$ such that $\zeta_{0,n}-\omega_{L}(\cdot +y_n)\to 0$ in $L^{2}(\mathbb{R}^{2}_{+})$ and $x_2(\zeta_{0,n}-\omega_{L}(\cdot +y_n))\to 0$ in $L^{1}(\mathbb{R}^{2}_{+})$. By applying the energy inequality \eqref{eq: SEI} for $\zeta_{0,n}-\omega_{L}(\cdot +y_n)$ and using $I_{\lambda}[\omega_L(\cdot +y_n)]=\mathcal{I}_{\lambda,\mu}$, we find that $I_{\lambda}[\zeta_{0,n}]\to \mathcal{I}_{\lambda,\mu}$. By conservation \eqref{eq: Z}, \eqref{eq: P}, and  \eqref{eq: E}, $\mu_n=||x_2\zeta_n||_{L^{1}}\to \mu$ and $I_{\lambda}[\zeta_n]\to I_{\lambda}[\omega_{L}]=\mathcal{I}_{\lambda,\mu}$.

By Theorem \ref{t: cthm}, there exists $\{y_n\}\subset \partial \mathbb{R}^{2}_{+}$ such that by choosing a subsequence, $\zeta_n(\cdot +y_n)\to \omega_L=\omega_L^{\lambda,W}$ in $L^{2}(\mathbb{R}^{2}_{+})$ and $x_2\zeta_n(\cdot +y_n)\to x_2\omega_L$ in $L^{1}(\mathbb{R}^{2}_{+})$, respectively. Thus,
\begin{align*}
0=\lim_{n\to\infty}\left\{||\zeta_n(\cdot +y_n)-\omega_L||_{L^{2}(\mathbb{R}^{2}_{+})}
+||x_2(\zeta_n(\cdot +y_n)-\omega_L)||_{L^{1}(\mathbb{R}^{2}_{+})}\right\}
\geq \liminf_{n\to\infty}d(\zeta_n,\omega_L)\geq \varepsilon_0>0.
\end{align*}
We obtain a contradiction.
\end{proof}

\bibliographystyle{plain}
\bibliography{ref}

\begin{thebibliography}{10}

\bibitem{AC22}
K.~Abe and K.~Choi.
\newblock {S}tability of {L}amb dipoles.
\newblock {\em {A}rch. {R}ational {M}ech. {A}nal.}, 244:877--917, (2022).

\bibitem{AJY}
K.~Abe, I.-J. Jeong, and Y.~Yao.
\newblock {S}tability for multiple {L}amb dipoles.
\newblock \href{https://arxiv.org/abs/2507.16474}{arXiv:2507.16474}.

\bibitem{Afan}
Y.~D. Afanasyev.
\newblock Formation of vortex dipoles.
\newblock {\em Physics of Fluids}, 18(3):037103, (2006).

\bibitem{AF86}
C.~J. Amick and L.~E. Fraenkel.
\newblock The uniqueness of {H}ill's spherical vortex.
\newblock {\em Arch. Rational Mech. Anal.}, 92:91--119, (1986).

\bibitem{AF88}
C.~J. Amick and L.~E. Fraenkel.
\newblock The uniqueness of a family of steady vortex rings.
\newblock {\em Arch. Rational Mech. Anal.}, 100:207--241, (1988).

\bibitem{BHM}
M.~Berti, Z.~Hassainia, and N.~Masmoudi.
\newblock Time quasi-periodic vortex patches of {E}uler equation in the plane.
\newblock {\em Invent. Math.}, 233(3):1279--1391, (2023).

\bibitem{BCK}
E.~Bru\`{e}, M.~Colombo, and A.~Kumar.
\newblock Flexibility of two-dimensional {E}uler flows with integrable vorticity.
\newblock \href{https://arxiv.org/abs/2408.07934}{arXiv:2408.07934}.

\bibitem{Burton96}
G.~R. Burton.
\newblock Uniqueness for the circular vortex-pair in a uniform flow.
\newblock {\em Proc. Roy. Soc. London Ser. A}, 452:2343--2350, (1996).

\bibitem{Burton05b}
G.~R. Burton.
\newblock Isoperimetric properties of {L}amb's circular vortex-pair.
\newblock {\em J. Math. Fluid Mech.}, 7:S68--S80, (2005).

\bibitem{B21}
G.~R. Burton.
\newblock Compactness and stability for planar vortex-pairs with prescribed impulse.
\newblock {\em J. Differ. Equ.}, 270:547--572, (2021).

\bibitem{BNL13}
G.~R. Burton, H.~J. Nussenzveig~Lopes, and M.~C. Lopes~Filho.
\newblock Nonlinear stability for steady vortex pairs.
\newblock {\em Comm. Math. Phys.}, 324:445--463, (2013).

\bibitem{CLQZZ}
D.~Cao, S.~Lai, G.~Qin, W.~Zhan, and C.~Zou.
\newblock {U}niqueness and stability of steady vortex rings for 3{D} incompressible {E}uler equation.
\newblock \href{https://arxiv.org/abs/2206.10165}{arXiv:2206.10165}.

\bibitem{CQZZ2}
D.~Cao, G.~Qin, W.~Zhan, and C.~Zou.
\newblock Remarks on orbital stability of steady vortex rings.
\newblock {\em Trans. Amer. Math. Soc.}, 376(5):3377--3395, (2023).

\bibitem{CQZZ}
D.~Cao, G.~Qin, W.~Zhan, and C.~Zou.
\newblock Uniqueness and stability of traveling vortex pairs for the incompressible euler equation.
\newblock {\em Ann. PDE}, 11(1), (2025).

\bibitem{CWZ}
D.~Cao, G.~Wang, and B.~Zuo.
\newblock {S}tability of degree-2 {R}ossby-{H}aurwitz waves.
\newblock \href{https://arxiv.org/abs/2305.03279}{arXiv: 2305.03279v3}.

\bibitem{CL82}
T.~Cazenave and P.-L. Lions.
\newblock Orbital stability of standing waves for some nonlinear {S}chr\"{o}dinger equations.
\newblock {\em Comm. Math. Phys.}, 85:549--561, (1982).

\bibitem{Chap1903}
S.~A. Chaplygin.
\newblock One case of vortex motion in fluid.
\newblock {\em Trudy Otd. Fiz. Nauk Imper. Mosk. Obshch. Lyub. Estest.}, 11(11--14), (1903): {E}nglish translation, Chaplygin, S. A., One case of vortex motion in fluid, Regul. Chaotic Dyn., 12, 219--232, (2007).

\bibitem{Choi24}
K.~Choi.
\newblock {S}tability of {H}ill's spherical vortex.
\newblock {\em {C}omm. {P}ure {A}ppl. {M}ath.}, 77:52--138, (2024).

\bibitem{CJ-Lamb}
K.~Choi and I.-J. Jeong.
\newblock Infinite growth in vorticity gradient of compactly supported planar vorticity near {L}amb dipole.
\newblock {\em Nonlinear Anal. Real World Appl.}, 65:Paper No. 103470, 20, (2022).

\bibitem{Choi25}
K.~Choi, I.-J. Jeong, and Y.-J. Sim.
\newblock On existence of sadovskii vortex patch: A touching pair of symmetric counter-rotating uniform vortices.
\newblock {\em Ann. PDE}, 11(2):18, (2025).

\bibitem{CJY}
K.~Choi, I.-J. Jeong, and Y.~Yao.
\newblock Stability of vortex quadrupoles with odd-odd symmetry.
\newblock \href{https://arxiv.org/abs/2409.19822}{arXiv:2409.19822}.

\bibitem{CH09}
H.~J.~H. Clercx and G.~J.~F. van Heijst.
\newblock {T}wo-{D}imensional {N}avier-{S}tokes {T}urbulence in {B}ounded {D}omains.
\newblock {\em Applied Mechanics Reviews}, 62(2):020802 (25 pages), (2009).

\bibitem{CG}
A.~Constantin and P.~Germain.
\newblock {S}tratospheric {P}lanetary {F}lows from the {P}erspective of the {E}uler {E}quation on a {R}otating {S}phere.
\newblock {\em {A}rch. {R}ational {M}ech. {A}nal.}, 245(1):587--644, (2022).

\bibitem{CGLZ}
A.~Constantin, P.~Germain, Z.~Lin, and H.~Zhu.
\newblock The onset of instability for zonal stratospheric flows.
\newblock \href{https://arxiv.org/abs/2503.14191}{arXiv:2503.14191}.

\bibitem{DdPMP2}
J.~D\'{a}vila, M.~del Pino, M.~Musso, and S.~Parmeshwar.
\newblock Global in time vortex configurations for the $2${D} {E}uler equations.
\newblock \href{https://arxiv.org/abs/2310.07238}{arXiv:2310.07238}.

\bibitem{DdPMP}
J.~D{\'a}vila, M.~{del Pino}, M.~Musso, and S.~Parmeshwar.
\newblock Asymptotic properties of vortex-pair solutions for incompressible euler equations in $\mathbb{R}^2$.
\newblock {\em J. Differ. Equ.}, 408:33--63, (2024).

\bibitem{DdpMW}
J.~D{\'a}vila, M.~del Pino, M.~Musso, and J.~Wei.
\newblock Gluing methods for vortex dynamics in {E}uler flows.
\newblock {\em Arch. Ration. Mech. Anal.}, 235(3):1467--1530, (2020).

\bibitem{Den09}
S.~A. Denisov.
\newblock Infinite superlinear growth of the gradient for the two-dimensional {E}uler equation.
\newblock {\em Discrete Contin. Dyn. Syst.}, 23(3):755--764, (2009).

\bibitem{DL89}
R.~J. DiPerna and P.-L. Lions.
\newblock Ordinary differential equations, transport theory and {S}obolev spaces.
\newblock {\em Invent. Math.}, 98:511--547, (1989).

\bibitem{DG}
M.~Dolce and T.~Gallay.
\newblock The long way of a viscous vortex dipole.
\newblock \href{https://arxiv.org/abs/2407.13562}{arXiv:2407.13562}.

\bibitem{FV94}
J.B. Flor and G.~J.~F. Van~Heijst.
\newblock An experimental study of dipolar vortex structures in a stratified fluid.
\newblock {\em J. Fluid Mech.}, 279:101--133, (1994).
\newblock HAL open access repository, 10.1017/S0022112094003836, hal-02140422, Available at: \href{https://hal.science/hal-02140422/file/Flor94.pdf}{https://hal.science/hal-02140422/file/Flor94.pdf}.

\bibitem{Fra00}
L.~E. Fraenkel.
\newblock {\em An introduction to maximum principles and symmetry in elliptic problems}, volume 128.
\newblock Cambridge University Press, Cambridge, 2000.

\bibitem{Ga11}
T.~Gallay.
\newblock Interaction of vortices in weakly viscous planar flows.
\newblock {\em Arch. Ration. Mech. Anal.}, 200:445--490, (2011).

\bibitem{VF89}
G.~J. F.~Van Heijst and J.~B. Flor.
\newblock Dipole formation and collisions in a stratified fluid.
\newblock {\em Nature}, 340:212--215, (1989).

\bibitem{HT}
D.~Huang and J.~Tong.
\newblock Steady contiguous vortex-patch dipole solutions of the 2d incompressible euler equation.
\newblock {\em {A}rch. {R}ational {M}ech. {A}nal.}, 249(4):46, (2025).

\bibitem{ILN03}
D.~Iftimie, M.~C. Lopes~Filho, and H.~J. Nussenzveig~Lopes.
\newblock Large time behavior for vortex evolution in the half-plane.
\newblock {\em Comm. Math. Phys.}, 237:441--469, (2003).

\bibitem{ISG99}
D.~Iftimie, T.~C. Sideris, and P.~Gamblin.
\newblock On the evolution of compactly supported planar vorticity.
\newblock {\em Comm. Partial Differential Equations}, 24:1709--1730, (1999).

\bibitem{JYZ}
I.-J. Jeong, Y.~Yao, and T.~Zhou.
\newblock Small scale formation for $2d$ {E}uler equation without symmetry.
\newblock \href{https://arxiv.org/pdf/2507.15739}{arXiv:2507.15739}.

\bibitem{KS}
A.~Kiselev and V.~\v{S}ver\'{a}k.
\newblock Small scale creation for solutions of the incompressible two-dimensional euler equation.
\newblock {\em Ann. of Math.}, 180(3):1205--1220, (2014).

\bibitem{KrXu21}
R.~Krasny and L.~Xu.
\newblock Vorticity and circulation decay in the viscous {L}amb dipole.
\newblock {\em Fluid Dynamics Research}, 53(1):015514, feb (2021).

\bibitem{Lamb2nd}
H.~Lamb.
\newblock {\em Hydrodynamics}.
\newblock Cambridge Univ. Press., 2nd ed. edition, 1895.

\bibitem{Lamb3rd}
H.~Lamb.
\newblock {\em Hydrodynamics}.
\newblock Cambridge Univ. Press., 3rd ed. edition, 1906.

\bibitem{Lamb}
H.~Lamb.
\newblock {\em Hydrodynamics}.
\newblock Cambridge Univ. Press., 6th ed. edition, 1932.

\bibitem{LR76}
V.~D. Larichev and G.~M. Reznik.
\newblock Two-dimensional solitary rossby waves.
\newblock {\em Dokl. USSR. Acad. Sci.}, 231:1077--1080, (1976).

\bibitem{Lieb83}
E.~H. Lieb.
\newblock Sharp constants in the hardy-littlewood-sobolev and related inequalities.
\newblock {\em Ann. of Math.}, 118(2):349--374, (1983).

\bibitem{Lions84a}
P.-L. Lions.
\newblock The concentration-compactness principle in the calculus of variations. {T}he locally compact case. {I}.
\newblock {\em Ann. Inst. H. Poincar\'{e} Anal. Non Lin\'{e}aire}, 1:109--145, (1984).

\bibitem{MV94}
V.~V. Meleshko and G.~J.~F. van Heijst.
\newblock On {C}haplygin's investigations of two-dimensional vortex structures in an inviscid fluid.
\newblock {\em J. Fluid Mech.}, 272:157--182, (1994).

\bibitem{NiRa}
A.~H. Nielsen and J.~Juul Rasmussen.
\newblock Formation and temporal evolution of the {L}amb-dipole.
\newblock {\em Physics of Fluids}, 9(4):982--991, 04 (1997).

\bibitem{Or92}
P.~Orlandi and G.~J.~F. {van Heijst}.
\newblock Numerical simulation of tripolar vortices in 2d flow.
\newblock {\em Fluid Dynamics Research}, 9(4):179--206, (1992).

\bibitem{PG15}
I.~P{\'e}rez-Garc{\'\i}a.
\newblock Exact solutions of the vorticity equation on the sphere as a manifold.
\newblock {\em Atm{\'o}sfera}, 28(3):179--190, (2015).

\bibitem{PP}
V.~I. Petviashvili and O.~A. Pohkotelov.
\newblock {\em Solitary Waves in Plasmas and in the Atmosphere}.
\newblock Routledge, 2020.

\bibitem{Protas}
B.~Protas.
\newblock On the linear stability of the {L}amb--{C}haplygin dipole.
\newblock {\em Journal of Fluid Mechanics}, 984:A7, (2024).

\bibitem{ReedSimon2}
M.~Reed and B.~Simon.
\newblock {\em Methods of modern mathematical physics. {I}. {F}unctional analysis}.
\newblock Academic Press, New York-London, 1972.

\bibitem{Stern}
M.~E. Stern.
\newblock Minimal properties of planetary eddies.
\newblock {\em Journal of Marine Research}, 33(1), (1975).

\bibitem{Taylor16}
M.~E. Taylor.
\newblock Euler equation on a rotating sphere.
\newblock {\em J. Funct. Anal.}, 270:3884--3945., (2016).

\bibitem{Wang24}
G.~Wang.
\newblock On concentrated traveling vortex pairs with prescribed impulse.
\newblock {\em Trans. Amer. Math. Soc.}, 377:2635--2661, (2024).

\bibitem{Wang25}
G.~Wang.
\newblock Stability of a class of exact solutions of the incompressible {E}uler equation in a disk.
\newblock {\em J. Funct. Anal.}, 289(5):Paper No. 110998, 24, (2025).

\bibitem{Xu}
X.~Xu.
\newblock Fast growth of the vorticity gradient in symmetric smooth domains for 2{D} incompressible ideal flow.
\newblock {\em J. Math. Anal. Appl.}, 439(2):594--607, (2016).

\bibitem{Yang91}
J.~F. Yang.
\newblock Existence and asymptotic behavior in planar vortex theory.
\newblock {\em Math. Models Methods Appl. Sci.}, 1:461--475, (1991).

\bibitem{zlatos2025}
A.~Zlato{\v{s}}.
\newblock Maximal double-exponential growth for the {E}uler equation on the half-plane.
\newblock \href{https://arxiv.org/abs/2507.04198}{arXiv:2507.04198}.

\end{thebibliography}

\end{document}